\DeclareMathOperator{\dom}{\mathrm{dom}}
\DeclareMathOperator{\rng}{\mathrm{rng}}
\DeclareMathOperator{\Fin}{\mathit{Fin}}
\DeclareMathOperator{\Sep}{\mathit{Sep}}
\newcommand{\dpls}{\, \udot{+}\, }
\newcommand{\upls}{\, \dot{+}\, }
\newcommand{\dmns}{\, \udot{-}\, }
\newcommand{\umns}{\, \dot{-}\, }
\newcommand{\dtms}{\, \udot{\times}\, }
\newcommand{\utms}{\, \dot{\times}\, }
\newcommand{\ddiv}{\, \udot{\div}\, }
\newcommand{\udiv}{\, \dot{\div}\, }
\newcommand{\isrc}[1]{{\textstyle Q\hspace{0cm}\langle #1\rangle}}
\newcommand{\choice}[2]{{\mathrm{Ch}_{{#1}}\hspace{0cm}(#2)}}
\newcommand{\udot}[1]{\text{\d{$\displaystyle #1$}}}
\newcommand{\intrr}[2]{\mathrm{int}_{\mathscr{#1}}\left({#2}\right)}
\newcommand{\fig}[2]{\mathrm{fig}_{\mathscr{#1}}\left(#2\right)}
\newcommand{\cl}[2]{\mathrm{cl}_{\mathscr{#1}}\left(#2\right)}
\newcommand{\mon}[2]{\mathrm{mon}_{\mathscr{#1}}\left(#2\right)}
\newcommand*{\FN}{\mathit{FN}}
\newcommand*{\FQ}{\mathit{FQ}}
\newcommand*{\BQ}{\mathit{BQ}}
\let\uhr\upharpoonright
\renewcommand*{\upharpoonright}{\hspace{-.07cm}\uhr\hspace{-.07cm}}
\newtheorem{thm}{Theorem}
\newtheorem{prop}[thm]{Proposition}
\newtheorem{cor}[thm]{Corollary}
\newtheorem{lemma}[thm]{Lemma}
\theoremstyle{remark}
\newtheorem{remark}{Remark}
\newtheorem{ex}{Example}
\theoremstyle{definition}
\newtheoremstyle{axiom1}
{3pt}
{3pt}
{\itshape}
{}
{\bfseries\itshape}
{}
{.5em}
{}
\newtheoremstyle{axiom2}
{3pt}
{3pt}
{\itshape}
{}
{\bfseries\itshape}
{\\}
{.5em}
{}
\theoremstyle{axiom1}
\title{A Foundation of $\sigma$-superadditive Measures\\ -- a note on advancing Kalina measures --}
\author{Kiri Sakahara\thanks{Yokohama National University and Kanagawa University, Kanagawa, Japan.} \and Takashi Sato\thanks{Toyo University, Tokyo, Japan.}}
\date{}
\begin{document}
\maketitle

\begin{abstract}
The present paper attempts to modify the way of constructing a measure in the Alternative Set Theory setting originally devised by Martin Kalina.
Introducing a system of cuts of rational numbers extended  with some special ones, it is proved that the measure which is nondecreasing, nonnegative and ``depending on the way of measurement'' as same as Kalina's, but exhibits $\sigma$-superadditivity is obtained.
\end{abstract}

\section{Introduction}
Among various attempts on developing theories on measures in Alternative Set Theory(AST, for short) \cite{ast}, the series of works \cite{Kalina1989-b,Kalina1989-a,Kalina-Zlatos1988,Kalina-Zlatos1989-b,Kalina-Zlatos1989-a} studied by Kalina and Zlato\v{s}, the first two of which are authored by Kalina alone, is of outstanding importance.
Not only it offers a consistent framework with that of traditional measure theory under the certain conditions, though Ra\v{s}kovi\'c \cite{Raskovic} also confirms independently that it is possible to construct measures which coincide with the framework of Loeb's, but also enables, especially Kalina \cite{Kalina1989-b,Kalina1989-a}, to measure classes on somewhat indefinite basis, say, the size of countable class $\FN$.

To put it more illustratively, let us consider stitching 1m long strings together again and again.
After repeating it countably many times, the whole string may become immeasurably long, and the size of it cannot be calculated specifically since how many of them are connected remains indefinite.
However, it is plausible to infer that it is half shorter than that of stitching 2m ones, so that the measure of the one stiching 1m long strings measured against that of 2m must be $\frac{1}{2}$.
Kalina \cite{Kalina1989-b,Kalina1989-a} organized a framework that enables to measure exactly that kind of length in a proper manner.

Despite its elaborate considerations, however, certain aspects are remained not fully investigated.
One example can be illustrated by the opposite problem.
Let us consider, conversely, cutting 1m long string in half repeatedly.
After countably many repetition, resulting pieces must be indiscernibly short, but they must also obey a certain type of arithmetic, too.
However, Kalina does not go further enough to answer this problem.

To complete this investigation in a convincing manner, the present paper introduces \textit{cuts of rationals} which  constitute an extended system of rational numbers.
They consist not only of cuts representing rational numbers but also some peculiar cuts, so called \textit{additive cuts} which satisfy the property $A+A=A$.
One of the most important ones is $\isrc{\frac{1}{\FN}}$ which represents an indiscernibly small number. 
It is constant no matter how many times it is added as long as it is finite. 
However, after adding up countably many times, the result jumps to any cut ranging from $\isrc{\frac{1}{\FN}}$ to $\isrc{\frac{\FN}{1}}$ depending on how it is approximated. 
This is the key property, said to be \textit{$\sigma$-superadditivity}, that enables not only to resolve the problem but also to measure classes which share properties with, say, \textit{Vitali sets} or the sets resulted from the theorem of Banach-Tarski \cite{Banach1924}. 

Moreover, the paper puts necessary definitions, theorems, and statements regarding Kalina's framework scattered across the papers together, and elaborates it to an extent to be able to measure specific classes more explicit manner.
Some examples are also provided.

\section{A System of Numbers}

Let us start with constructing a number system in accordance with Vop\v{e}nka \cite{ast}.
The class of \textit{natural numbers} $N$ is defined as:
\[
N\ =\ \left\{x\ ;\ 
\begin{matrix}
\left(\forall y\in x\right) \left(y\subseteq x\right)\\
\wedge\left(\forall y,z\in x \right) \left(y\in z \vee y=z \vee z\in y\right)
\end{matrix}
\right\},
\]
while the class of \textit{finite natural numbers} $\FN$ consists of the numbers represented by finite sets
\[
\FN \ =\ \left\{x\in N\ ;\ 
\Fin(x)\right\}
\]
in which $\Fin(x)$ means that each subclass of $x$ is a set.
The class of all integers $Z$ and that of all rational numbers are defined respectively as:
\[
Z\ =\ N\cup \left\{ -a ;\,
 a\in N\wedge a\ne 0
\right\}
\qquad\text{and}\qquad
Q\ =\ \left\{
\frac{x}{y}\ ;\ x,\,y\in Z \wedge y\ne 0
\right\}.
\]
$\BQ\subseteq Q$ denotes the class of \textit{bounded rational numbers} and $\FQ\subseteq BQ$ the class of \textit{finite rational numbers}, i.e., 
\begin{eqnarray*}
&& \BQ=\{x\in Q\ ;\ \left(\exists i\in\FN\right)
  \left(|x|\leq i\right)
\}
\; \text{and}\\
&& \FQ\ =\ \left\{
q \in Q\ ;\
\left(\exists x,y\in\FN\right)\left( q=\frac{x}{y}\vee q=-\frac{x}{y}\right)
\right\}.
\end{eqnarray*}

Real numbers are defined in AST as an equivalence class of rational numbers.
The reason behind this construction lies in the human's inability to distinguish two mutually close rational numbers.
This idea is grasped by the \textit{indiscernibility equivalence}, $\doteq$, on the class $Q$ of all rational numbers.
The definition of the indiscernibility equivalence $\doteq$ is given as: 
\[
p\doteq q\quad \equiv\quad
 \begin{pmatrix}
  \left( \exists k \right)
  \left( \forall i>0\right)
  \left(|p|<k \wedge |p-q|<\frac{1}{i}\right)\\[.18cm]
  \vee 
  \left(\forall k\right)
  \left(
  \left( p>k \wedge q>k\right)  \vee 
  \left(p<-k \wedge q<-k\right)
  \right)
 \end{pmatrix}
\]
in which the letters $i,j,k$ denote finite natural numbers, i.e., $i,j,k\in\FN$ for notational ease, hereafter.
For each $q\in Q$ the notation $\mon{}{q}=\{s\in Q\,;\, s\doteq q\}$, called the \textit{monad} of $q$, represents the class of all rational numbers which are indiscernible from $q$.
And every real number $a$ is represented by a monad $\mon{}{q}$ of some rational number $q$.

The class of all real numbers $R$ is defined as:
\[
R\ \equiv\ \left\{\mon{}{x}\ ;\ 
x\in \BQ
\right\}\ =\ \BQ/\doteq
\]
Two limit cases are denoted as:
\[
\infty\ =\ 
\{q\in Q\,;\, 
 \left(\forall i\right)
 \left(q>i\right)
\}\quad\text{ and }\quad
-\infty\ =\ 
\{q\in Q\,;\,
 \left(\forall i\right)
 \left(q<-i\right)
\}.
\]

Finite arithmetic operation of real numbers is same as usual.
However, the countable sum of real numbers cannot uniquely be determined as we shall see later in the paper.

\section{Borel Classes}\label{BC}

The system of all Borel classes, denoted as $\mathscr{B}$, is the smallest $\sigma$-ring\footnote{
The system of classes $\mathscr{M}$ is called a $\sigma$-ring if it is closed with respect to class-theoretical difference and countable unions, but  not required to contain a whole class, while a $\sigma$-field is.
} containing all the set theoretically definable classes, and $\mathscr{B}_s$ denotes that of all Borel semisets.
More precisely, Borel classes are defined inductively as
\begin{enumerate}[(1)]
  \setlength{\parskip}{0.072cm} 
  \setlength{\itemsep}{0.072cm} 
\item $X$ is a $\sigma_0$-class iff $X$ is a $\pi_0$-class iff $X$ is a set-theoretically definable class.
\item If $a\in\FN$, then $X$ is a $\sigma_a$-class ($\pi_a$-class) iff there is a sequence $(X_i)_{i\in\FN}$ such that each $X_i$ is a $\pi_{<a}$-class ($\sigma_{<a}$-class), which represents a $\pi_b$-class ($\sigma_b$-class) for some $b<a$, and $X=\cup_{i\in\FN} X_i$ ($\cap_{i\in\FN} X_i$).
\item $X$ is a $\delta_a$-class if $X$ is both a $\sigma_a$-class and $\pi_a$-class.
\item Finally, $X$ is a Borel class if $X$ is a $\sigma_a$-class or $\pi_a$-class for some $a\in\FN$.
\end{enumerate}

$\mathscr{B}$ is strictly larger than $V$ since it includes proper semisets, such as $\FN$, and, above all, $V$ itself.
The elements of $\mathscr{B}$ is called \textit{Borel classes}.
When $B\in\mathscr{B}$ is a semiset, it is called a \textit{Borel semiset}.
While $\mathscr{B}$ is a $\sigma$-field as well since it includes $V$ as a whole class, $\mathscr{B}_s$ is not.

Despite being defined as countable intersection/union of $\sigma_{<n}$/$\pi_{<n}$-classes for some $n\in\FN$, Borel classes can be built as $\delta_2$-class. 
The next theorem confirms it.
To describe such sequences conveniently, let $\cup\cap X_i$ and $\cap\cup X_i$ denote
\[
\bigcup_{i\in\FN}\bigcap_{j\geq i} X_j\quad\text{and}\quad
\bigcap_{i\in\FN}\bigcup_{j\geq i} X_j
\]
respectively, for a given sequence of classes $(X_i)_{i\in\FN}$.
\begin{thm}\label{pisigma}
For each Borel class $X\in\mathscr{B}$, there exists a sequence of set-theoretically definable classes $(X_i)_{i\in\FN}$ which satisfies $\cap\cup X_i=\cup\cap X_i=X$. 
\end{thm}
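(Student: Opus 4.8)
The plan is to induct on the Borel rank $a$, proving the slightly stronger statement that every $\sigma_a$-class and every $\pi_a$-class admits such a representation. Throughout it is convenient to record that $\cup\cap X_i \subseteq \cap\cup X_i$ holds for every sequence, so that the desired conclusion $\cap\cup X_i=\cup\cap X_i=X$ is equivalent to the two inclusions $X\subseteq \cup\cap X_i$ and $\cap\cup X_i\subseteq X$: the first says every $x\in X$ lies in $X_i$ for all but finitely many $i$, and the second says every $x\notin X$ lies in $X_i$ for only finitely many $i$. For the base case $a=0$ the class $X$ is set-theoretically definable, and the constant sequence $X_i=X$ works trivially.

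For the inductive step I would first dispose of the $\pi_a$ case by complementation: if $X$ is a $\pi_a$-class then $V\setminus X$ is a $\sigma_a$-class, and since complementation relative to $V$ interchanges $\cap\cup X_i$ with $\cup\cap(V\setminus X_i)$, a representation of $V\setminus X$ yields one of $X$ upon replacing each $X_i$ by the still definable class $V\setminus X_i$. It therefore suffices to treat a $\sigma_a$-class $X=\bigcup_{n}Y_n$ with each $Y_n$ of rank $<a$. By the induction hypothesis each $Y_n$ carries a definable double sequence $(Y_n^i)_{i\in\FN}$ with $\cap\cup Y_n^i=\cup\cap Y_n^i=Y_n$. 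Passing to the finite unions $Z_n=\bigcup_{m\le n}Y_m$ and $Z_n^i=\bigcup_{m\le n}Y_m^i$, which are again definable, I may assume the union is increasing, $Z_n\subseteq Z_{n+1}$ and $\bigcup_n Z_n=X$, with $\cap\cup Z_n^i=\cup\cap Z_n^i=Z_n$ for each $n$, since finite unions commute with the set-theoretic convergence of these sequences.

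The heart of the argument is then a single diagonalization: take $X_k=Z_k^{i(k)}$ for a nondecreasing index choice $i(k)$ yet to be fixed. The motivation is that if the $Z_n$ were themselves definable one could simply set $X_k=Z_k$, for then the $Z_n$ increasing to $X$ give $\cup\cap Z_k=\bigcup_k Z_k=X$ and $\cap\cup Z_k=\bigcap_k X=X$. The entire difficulty is thus concentrated in replacing each $Z_k$ by a definable approximant $Z_k^{i(k)}$ without destroying this convergence: for a point $x$ the two inclusions demand that $i(k)$ eventually exceed, respectively, the entry threshold (the least $i$ beyond which $x\in Z_k^i$, when $x\in X$) and the exit threshold (the least $i$ beyond which $x\notin Z_k^i$, when $x\notin X$).

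This synchronization is the main obstacle and the point at which the Alternative Set Theory is essential: classically no single $\FN$-sequence $i(k)$ can dominate all these $x$-dependent thresholds at once, which is precisely why the classical Borel hierarchy does not collapse and why a countable union of $\delta_2$-classes need not be $\delta_2$. The plan is to apply the prolongation axiom to the array $(Z_n^i)_{n,i\in\FN}$, obtaining a genuine set array indexed up to some infinite $\gamma\in N$, and then to extract $i(k)$ by overspill, using that the witnessing inequalities hold throughout $\FN$ and therefore persist a little past it; the restriction back to $\FN$ is the sought sequence. The delicate step I expect to fight with is verifying that this choice dominates both threshold families for every $x$ simultaneously—equivalently, that the prolonged approximation neither fails to capture points of $X$ nor overshoots into $V\setminus X$ as the set index crosses $\FN$—which is exactly what prolongation and overspill supply here and the classical setting does not. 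Confirming that each $X_k$ is a definable class and then checking the two inclusions of the first paragraph closes the induction and hence the theorem.
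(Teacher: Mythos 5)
Your architecture (induction on Borel rank, complementation to reduce $\pi_a$ to $\sigma_a$, then a single diagonal $X_k=Z_k^{i(k)}$ through an increasing exhaustion) is not the paper's: the paper does no induction and no complementation, but unfolds the entire generating tree of $X$ at once and, for each index $i$ of the outermost generating sequence, selects a whole branch $\ell_2,\dots,\ell_n$ of inner indices so that the resulting set-theoretically definable class $X_i$ satisfies the two-sided \emph{class} inclusion $X\subseteq X_i\subseteq X(n-1)_i$; monotonicity of $(X(n-1)_i)_{i\in\FN}$ then gives both equalities at once. The difference matters, because the step you yourself flag as delicate is the step that fails: no choice of $i(\cdot)$ can make your diagonal work in general.

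Concretely, for $\cap\cup X_k\subseteq X$ you need one function $i:\FN\to\FN$ which, for every $x\notin X$, eventually dominates the exit thresholds $s(k,x)=\min\{i:\ x\notin Z_k^j\ \text{for all}\ j\ge i\}$. Prolongation, far from supplying this, is exactly what defeats it. Let $u$ be the set of all set functions $g:\gamma\to\delta$ with $\gamma,\delta$ infinite, put $Y_n^i=\{g\in u:\ g(n)\ge i\}$, so $Y_n=\cap_{i\in\FN}Y_n^i=\{g\in u:\ g(n)\notin\FN\}$ is a $\pi_1$-class and $X=\cup_{n\in\FN}Y_n$ is a $\sigma_2$-semiset; then $Z_k^i=\{g\in u:\ (\exists m\le k)(g(m)\ge i)\}$. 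For any class function $f:\FN\to\FN$ the axiom of prolongation yields a set function $\bar f\in u$ with $\bar f(n)=f(n)$ for all $n\in\FN$; this $\bar f$ lies outside $X$, yet $\bar f\in Z_k^i$ iff $i\le\max_{m\le k}f(m)$, so $s(k,\bar f)=1+\max_{m\le k}f(m)$. Since \emph{every} class function $\FN\to\FN$ arises as such a trace, no single $i(\cdot)$ eventually dominates all the thresholds: taking $f=i+1$ gives $\bar f\in Z_k^{i(k)}$ for every $k$, hence $\bar f\in\cap\cup X_k\setminus X$. Choosing the indices infinite after prolonging the array breaks the other inclusion symmetrically (any $g$ with $g(n)$ infinite but below the chosen index drops out of $\cup\cap X_k$). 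So the approximants cannot be a diagonal through the array $(Z_n^i)$ at all; they must be produced by a different principle. The paper's sandwich $X\subseteq X_i\subseteq X(n-1)_i$ is a containment of classes verified once per $i$, and together with the monotone convergence $\cap_iX(n-1)_i=X$ it handles all $x\notin X$ uniformly, never requiring domination of point-dependent thresholds --- that is the idea your proposal is missing. (Your base case, the $\pi$-to-$\sigma$ complementation, and the reduction to an increasing exhaustion are unobjectionable.)
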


\begin{proof}
  Let $X\in\mathscr{B}$ be a $\pi_n$-class, where $n\geq 3$, and $(X(n-1)_i)_{i\in\FN}$ denotes a descending generating sequence of $X=X(n)$ for notational ease, consisting of $\sigma_{n-1}$-classes for simplicity.
  Let also $(X(n-2)_{i\ ;\ \ell_1})_{i\in\FN}$ denote an ascending generating sequence of $X(n-1)_{\ell_1}$ for each $\ell_1\in\FN$, consisting of $\pi_{n-2}$-classes.

  Similarly, let $\left(X(n-x)_{i\ ;\ \ell_1,\ldots,\ell_{x-1}}\right)_{i\in\FN}$ denote a descending generating sequence of $X(n-x+1)_{\ell_{x-1}\ ;\ \ell_1,\ldots,\ell_{x-2}}$ consisting of $\sigma_{n-x}$-classes if $x$ is odd, otherwise an ascending generating sequence consisting of $\pi_{n-x}$-classes.

  Define a sequence $\left(X_i\right)_{i\in\FN}$ of set-theoretically definable classes as $X_i=X(0)_{\ell_n\ ;\ i,\,\ell_2,\ldots,\ell_{n-1}}$ and $\ell_2,\ldots,\ell_n\in\FN$ are determined so as to satisfy the following inclusions for each $k\in\{0,\ldots, \lfloor n/2\rfloor-1\}$
  \begin{eqnarray*}
    X(n) & \subseteq & X(n-2k-2)_{\ell_{2k+2}\ ;\ i,\,\ell_2,\ldots,\ell_{2k+1}}\\
    &\subseteq& X(n-2k-3)_{\ell_{2k+3}\ ;\ i,\,\ell_2,\ldots,\ell_{2k+2}}\ \subseteq \ X(n-1)_i.
  \end{eqnarray*}
  Then, it is satisfied that
  \[
  X(n)\ \subseteq\ X_i\ \subseteq X(n-1)_i\quad\text{for all}\ i\in\FN.
  \]
  Since $X(n-1)_j\subseteq X(n-1)_i$ for all $j\geq i$, the following inclusions are met:
  \[
  X(n)\ \subseteq \ \cup_{j\geq i} X_{j}\ \subseteq \ X(n-1)_{i}\quad\text{for all}\ i\in\FN.
  \]
  Consequently, the equation $\cap\cup X_i=X(n)$ holds.

  By the same argument, the following inclusions are also satisfied:
  \[
  X(n)\ \subseteq \ \cap_{j\geq i} X_{j}\ \subseteq \ \cap_{j\geq i}X(n-1)_{j}\quad\text{for all}\ i\in\FN.
  \]
  Since $\cap_{j\geq i}X(n-1)_j=X(n)$ for all $i\in\FN$, the equation $\cup\cap X_i=X(n)$ holds.
  
  Next, let $X\in\mathscr{B}$ be a $\sigma_n$-class, where $n\geq 3$, and $\left(X(n-x)_{i\ ;\ \ell_1,\ldots,\ell_{x-1}}\right)_{i\in\FN}$ denote an ascending generating sequence of $X(n-x+1)_{\ell_{x-1}\ ;\ \ell_1,\ldots,\ell_{x-2}}$ consisting of $\pi_{n-x}$-classes if $x$ is odd, otherwise a descending generating sequence consisting of $\sigma_{n-x}$-classes.

  Define a sequence $\left(X_i\right)_{i\in\FN}$ of set-theoretically definable classes as $X_i=X(0)_{\ell_n\ ;\ i,\,\ell_2,\ldots,\ell_{n-1}}$ in which $\ell_2,\ldots,\ell_n\in\FN$ are determined so as to satisfy the following inclusions for each $k\in\{0,\ldots, \lfloor n/2\rfloor-1\}$
  \begin{eqnarray*}
    X(n-1)_{i} & \subseteq & X(n-2k-3)_{\ell_{2k+3}\ ;\ i,\,\ell_2,\ldots,\ell_{2k+2}}\\
    &\subseteq& X(n-2k-2)_{\ell_{2k+2}\ ;\ i,\,\ell_2,\ldots,\ell_{2k+1}}\ \subseteq \ X(n).
  \end{eqnarray*}
  Then, it is satisfied that
  \[
  X(n-1)_i\ \subseteq\ X_i\ \subseteq X(n)\quad\text{for all}\ i\in\FN.
  \]
  Since $X(n-1)_i\subseteq X(n-1)_j$ for all $i\leq j$, the following inclusions are met:
  \[
  X(n-1)_i\ \subseteq \ \cap_{j\geq i} X_{j}\ \subseteq \ X(n)\quad\text{for all}\ i\in\FN.
  \]
  Consequently, the equation $\cup\cap X_i=X(n)$ holds.

  By the same argument, the following inclusions are also satisfied:
  \[
  \cup_{j\geq  i}X(n-1)_{j}\ \subseteq \ \cup_{j\geq i} X_{j}\ \subseteq \ X(n)\quad\text{for all}\ i\in\FN.
  \]
  Since $\cup_{j\geq i}X(n-1)_j=X(n)$ for all $i\in \FN$, it holds that $\cap\cup X_i=X(n)$.
\end{proof}

The sequence $(X_i)_{i\in \FN}$ of set-theoretically definable classes which satisfies $\cap\cup X_i=\cup\cap X_i=X$ for a Borel class $X\in\mathscr{B}$ is said to be a \textit{Borel generating sequence} of $X$.
Borel generating sequences have tractable properties with regard to class operations.
The next theorem shows that the union of a couple of generating sequences of Borel classes does generate their union.

\begin{thm}\label{gen1}
  For each pair of Borel generating sequences $(X_i)_{i\in\FN}$ and $(Y_i)_{i\in\FN}$ of Borel classes $X,Y\in\mathscr{B}$, the sequence $(X_i\cup Y_i)_{i\in\FN}$ also generates $X\cup Y$.
\end{thm}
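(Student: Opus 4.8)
The plan is to prove that $\cap\cup(X_i\cup Y_i)=\cup\cap(X_i\cup Y_i)=X\cup Y$ by direct manipulation of the $\cup\cap$ and $\cap\cup$ operators defined just before Theorem~\ref{pisigma}. First I would recall what the hypotheses give: since $(X_i)$ generates $X$ and $(Y_i)$ generates $Y$, we have the four equalities $\cap\cup X_i=\cup\cap X_i=X$ and $\cap\cup Y_i=\cup\cap Y_i=Y$. The goal splits into showing $\cap\cup(X_i\cup Y_i)=X\cup Y$ and $\cup\cap(X_i\cup Y_i)=X\cup Y$ separately; establishing both is what certifies that $(X_i\cup Y_i)$ is itself a Borel generating sequence.

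For the $\cap\cup$ direction, I would expand
\[
\bigcap_{i}\bigcup_{j\geq i}(X_j\cup Y_j)\ =\ \bigcap_{i}\left(\bigcup_{j\geq i}X_j\ \cup\ \bigcup_{j\geq i}Y_j\right),
\]
using that a union distributes over an indexed union. The key manageable fact is that the tails $\bigcup_{j\geq i}X_j$ and $\bigcup_{j\geq i}Y_j$ are each \emph{nonincreasing} in $i$, so at this step I can invoke the distributive behaviour of intersection over the union of two such monotone families. Concretely, I would argue that $\bigcap_i\left(A_i\cup B_i\right)=\left(\bigcap_i A_i\right)\cup\left(\bigcap_i B_i\right)$ holds whenever $(A_i)$ and $(B_i)$ are both nonincreasing sequences, and apply it with $A_i=\bigcup_{j\geq i}X_j$, $B_i=\bigcup_{j\geq i}Y_j$; this reduces the left side to $\cap\cup X_i\ \cup\ \cap\cup Y_i=X\cup Y$. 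The $\cup\cap$ direction is dual: I would write $\bigcup_i\bigcap_{j\geq i}(X_j\cup Y_j)$, note $\bigcap_{j\geq i}(X_j\cup Y_j)\supseteq\bigcap_{j\geq i}X_j\cup\bigcap_{j\geq i}Y_j$, and push through the corresponding monotone-distributivity identity for union over intersection of two nondecreasing families to land on $\cup\cap X_i\ \cup\ \cup\cap Y_i=X\cup Y$.

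The inclusion $X\cup Y\subseteq\cup\cap(X_i\cup Y_i)$ and $\cap\cup(X_i\cup Y_i)\subseteq X\cup Y$ are the easy halves and follow from the elementwise monotonicity $X_j\subseteq X_j\cup Y_j$, $Y_j\subseteq X_j\cup Y_j$. The main obstacle, and the only place requiring care, is the reverse inclusion in the $\cup\cap$ case, namely showing $\cup\cap(X_i\cup Y_i)\subseteq X\cup Y$: here the containment $\bigcap_{j\geq i}(X_j\cup Y_j)\supseteq\bigcap_{j\geq i}X_j\cup\bigcap_{j\geq i}Y_j$ is generally \emph{strict}, so I cannot simply distribute. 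I expect to handle this by the standard argument that if $v\in\bigcap_{j\geq i}(X_j\cup Y_j)$ for some $i$, then for each $j\geq i$ either $v\in X_j$ or $v\in Y_j$; partitioning the tail indices accordingly, at least one of the two index sets is cofinal, so $v$ lies in infinitely many $X_j$ or infinitely many $Y_j$, placing $v$ in $\cap\cup X_i=X$ or $\cap\cup Y_i=Y$. Because the $\cap\cup$ and $\cup\cap$ forms coincide for each generating sequence by hypothesis, this is enough to close the inclusion.

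Throughout I would work set-theoretically inside AST, treating membership $v\in\bigcup_{j\geq i}(\cdot)$ via the defining formulas; the cofinality step is the one requiring a genuine (though elementary) combinatorial observation rather than pure Boolean algebra, so I would present it explicitly and let the remaining distributive identities be verified by routine inclusion-chasing.
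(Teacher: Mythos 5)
Your argument is correct and, in substance, the same as the paper's: the paper also establishes exactly the two sandwich inclusions $X\cup Y\subseteq\cup\cap(X_i\cup Y_i)$ and $\cap\cup(X_i\cup Y_i)\subseteq X\cup Y$ by direct element-chasing (its contradiction arguments are just the contrapositive form of your monotone-distributivity and cofinality observations), and then closes the chain using $\cup\cap Z_i\subseteq\cap\cup Z_i$. One small slip to fix: you list $\cap\cup(X_i\cup Y_i)\subseteq X\cup Y$ among the ``easy halves following from elementwise monotonicity,'' but monotonicity only gives the opposite containment $X\subseteq\cap\cup(X_i\cup Y_i)$; this inclusion genuinely needs the tail argument (if $a\notin X$ and $a\notin Y$ then $a$ is eventually outside every $X_j$ and every $Y_j$, hence eventually outside $X_j\cup Y_j$), which your nonincreasing-tails distributivity identity in the second paragraph does supply, so the proof as a whole is complete.
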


\begin{proof}
  Suppose there exists $a\in X$ which satisfies $a\notin \cup\cap(X_i\cup Y_i)$.
  Then, there always exists $j>i$ which satisfies $a\notin X_j\cup Y_j$ for all $i\in\FN$, implying that $a\notin X_j$.
  It contradicts with that $a\in \cup\cap X_i$ since $\cup\cap X_i=X$.
  Thus, $(\cup\cap X_i)\cup (\cup\cap Y_i)=X\cup Y\subseteq \cup\cap (X_i\cup Y_i)$ holds.
  
  Secondly, suppose that there exists $a\in \cap\cup (X_i\cup Y_i)$ which satisfies $a\notin X\cup Y$.
  Then, there exists $i\in\FN$ which satisfies $a\notin X_j$ and $a\notin Y_j$ for all $j>i$.
  It contradicts with that $a\in \cap\cup (X_i\cup Y_i)$.
  Thus, $\cap\cup (X_i\cup Y_i)\subseteq X\cup Y$ and, finally, the equations $\cap\cup(X_i\cup Y_i)=\cup\cap(X_i\cup Y_i)=X\cup Y$ hold.
\end{proof}

There also exist Borel generating sequences which preserve $\sigma$-additivity.
To prove the theorem, let us first state the next lemma.

\begin{lemma}\label{fingap}
  For any generating sequence $(X_i)_{i\in\FN}$ of a Borel class $X\in\mathscr{B}$, the sequence $(Y_i)_{i\in\FN}$ consisting of the same elements with $(X_i)_{i\in\FN}$ except only finite of them differ, that is, $\{i\in\FN\ ;\  Y_i\ne X_i\}$ is finite set, also generates $X$.
\end{lemma}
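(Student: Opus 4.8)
The plan is to exploit that both operators $\cup\cap$ and $\cap\cup$ depend only on the \emph{tail} of the sequence to which they are applied, so that altering finitely many terms can affect neither of the two resulting classes.

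First I would isolate a threshold beyond which the two sequences coincide. By hypothesis $S=\{i\in\FN\,;\,Y_i\ne X_i\}$ is a finite set, and being a finite subclass of $\FN$ it is bounded in $\FN$; hence, discarding the trivial case $S=\emptyset$, it has a greatest element $m\in\FN$, and with $N=m+1$ we get $Y_i=X_i$ for every $i\geq N$.

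Next I would invoke the monotonicity of the inner operations. The classes $\bigcap_{j\geq i}X_j$ increase with $i$, so $\bigcup_{i\in\FN}\bigcap_{j\geq i}X_j=\bigcup_{i\geq N}\bigcap_{j\geq i}X_j$; dually, the classes $\bigcup_{j\geq i}X_j$ decrease with $i$, so $\bigcap_{i\in\FN}\bigcup_{j\geq i}X_j=\bigcap_{i\geq N}\bigcup_{j\geq i}X_j$, and the same two identities hold with $Y$ replacing $X$. For each $i\geq N$ the bound index $j$ in either inner operation satisfies $j\geq i\geq N$, so $Y_j=X_j$ and therefore $\bigcap_{j\geq i}Y_j=\bigcap_{j\geq i}X_j$ and $\bigcup_{j\geq i}Y_j=\bigcup_{j\geq i}X_j$. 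Substituting, I obtain $\cup\cap Y_i=\cup\cap X_i=X$ and $\cap\cup Y_i=\cap\cup X_i=X$, which is exactly the assertion that $(Y_i)_{i\in\FN}$ generates $X$.

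The computation is routine monotone-class algebra; the single step that genuinely uses the ambient theory is the passage from ``$S$ is a finite set'' to ``$S$ is bounded in $\FN$'', i.e.\ the existence of one threshold $N$ serving all indices at once. I expect this to be the point to state with care, because it is boundedness---guaranteed in AST precisely by finiteness of a subclass of $\FN$---rather than mere countability of the exceptional set that the argument requires.
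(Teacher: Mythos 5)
Your proof is correct and follows essentially the same route as the paper's: fix a threshold beyond which the two sequences agree, then use the monotonicity of the tail intersections and tail unions in the starting index to see that $\cup\cap$ and $\cap\cup$ are unchanged. Your explicit remark that finiteness of the exceptional set (as a subclass of $\FN$) is what yields a single bounding index is a point the paper passes over silently, but both arguments are the same.
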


\begin{proof}
  By definition of $(Y_i)_{i\in\FN}$, there exists $j\in\FN$ which satisfies the equation $\cap_{i> j} X_i=\cap_{i> j} Y_i$.
  Since $\cap_{i>k} X_i \subseteq \cap_{i>j} X_i$ for all $k\in j$, it is also satisfied that $\cup\cap X_i =\cup\cap Y_i$ and, consequently,  $\cup\cap Y_i=X$.
  Similarly, the equations $\cap\cup X_i =\cap\cup Y_i$ and $\cap\cup Y_i=X$ hold. 
\end{proof}

The lemma assures generally that no matter how many elements of a generating sequence change, which Borel class it generates remains the same unless it is finite.
Consequently, the next theorem results.

\begin{thm}\label{countapp}
  Let $B$ be a Borel class and $\{B(j)\ ;\ j\in\FN\}$ be a family of Borel classes which satisfies $\cup B(j)=B$.
  Then there exists a family of generating sequences $\{(X(j)_i)_{i\in\FN}\ ;\ j\in\FN\}$ whose element generates $B(j)$ and union $\left(\cup X(j)_i\right)_{i\in\FN}$ generates $B$.  
\end{thm}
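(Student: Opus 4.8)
The plan is to build each generating sequence of $B(j)$ from an arbitrary one by two modifications---intersecting it termwise with a fixed generating sequence of $B$, and then blanking out an initial segment---so that the resulting diagonal union is simultaneously a legitimate sequence of set-theoretically definable classes and sandwiched between $B$ and a generating sequence of $B$.

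First I would invoke Theorem~\ref{pisigma} to fix a generating sequence $(W_i)_{i\in\FN}$ of $B$ and, for each $j\in\FN$, an arbitrary generating sequence $(\tilde X(j)_i)_{i\in\FN}$ of $B(j)$. Reading $\cup\cap$ and $\cap\cup$ as the liminf and limsup of the relevant sequences, one records the elementary identities $\cup\cap(A_i\cap C_i)=(\cup\cap A_i)\cap(\cup\cap C_i)$ and $\cap\cup(A_i\cap C_i)\subseteq(\cap\cup A_i)\cap(\cap\cup C_i)$, together with the general inclusion $\cup\cap A_i\subseteq\cap\cup A_i$. Since $B(j)\subseteq B$, applying these with $A_i=\tilde X(j)_i$ and $C_i=W_i$ gives $\cup\cap(\tilde X(j)_i\cap W_i)=B(j)\cap B=B(j)$ and $\cap\cup(\tilde X(j)_i\cap W_i)\subseteq B(j)\cap B=B(j)$, which pins both limits to $B(j)$; hence the intersected sequence still generates $B(j)$. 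I then set $X(j)_i=\tilde X(j)_i\cap W_i$ for $i\geq j$ and $X(j)_i=\emptyset$ for $i<j$. Only the finitely many indices $i<j$ are altered, so by Lemma~\ref{fingap} the sequence $(X(j)_i)_{i\in\FN}$ still generates $B(j)$; moreover $X(j)_i\subseteq W_i$ for every $i$, and $X(j)_i=\emptyset$ whenever $j>i$.

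The blanking guarantees that the union sequence collapses to a finite union, $Z_i:=\bigcup_{j\in\FN}X(j)_i=\bigcup_{j\leq i}X(j)_i$, so each $Z_i$ is a finite union of set-theoretically definable classes and is therefore itself set-theoretically definable---this is exactly what makes $(Z_i)_{i\in\FN}$ an admissible candidate for a generating sequence. It then remains to verify $\cup\cap Z_i=\cap\cup Z_i=B$. For $B\subseteq\cup\cap Z_i$, I would take $a\in B$, choose $j_0$ with $a\in B(j_0)=\cup\cap X(j_0)_i$, and fix $i_0$ with $a\in X(j_0)_k$ for all $k\geq i_0$; then for every $k\geq\max(i_0,j_0)$ we have $j_0\leq k$, so $a\in X(j_0)_k\subseteq Z_k$, whence $a\in\cup\cap Z_i$. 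For $\cap\cup Z_i\subseteq B$, the key observation is that $Z_i\subseteq W_i$, which yields $\cap\cup Z_i\subseteq\cap\cup W_i=B$ immediately. Chaining these with $\cup\cap Z_i\subseteq\cap\cup Z_i$ produces $B\subseteq\cup\cap Z_i\subseteq\cap\cup Z_i\subseteq B$, so all three coincide with $B$.

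I expect the main obstacle to be precisely the inclusion $\cap\cup Z_i\subseteq B$: with arbitrarily chosen generating sequences the diagonal union can ``leak,'' since an element may lie cofinally often in $Z_k$ while the witnessing index $j\leq k$ drifts to infinity, so that the element belongs to no single $B(j)$. The termwise intersection with the fixed sequence $(W_i)$ is exactly the device that plugs this leak, trading the delicate bookkeeping over drifting witnesses for the single clean inequality $Z_i\subseteq W_i$. Consequently the only place demanding genuine care is the verification that this intersection does not destroy the generating property of each $(X(j)_i)$---handled by the liminf/limsup identities above---after which Lemma~\ref{fingap} absorbs the cosmetic blanking step for free.
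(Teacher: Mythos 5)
Your proof is correct, and it shares the paper's central device---padding each sequence with $\emptyset$ on the initial segment $i<j$ so that $Z_i=\bigcup_{j\in\FN}X(j)_i$ collapses to a finite (hence set-theoretically definable) union, and invoking Lemma~\ref{fingap} to see that the padding is harmless---but it adds one substantive ingredient the paper does not use: the termwise intersection $X(j)_i=\tilde X(j)_i\cap W_i$ with a fixed generating sequence $(W_i)_{i\in\FN}$ of $B$. The paper works with the blanked sequences alone and argues the inclusion $\cap\cup\left(\cup_{j\in\FN}X(j)_i\right)\subseteq B$ directly, asserting that for $b\notin B$ there is a \emph{single} index $i$ with $b\notin X(j)_k$ for all $k>i$ and all $j\in\FN$; since $b\notin B(j)$ only yields an index $i_j$ depending on $j$, and these need not be bounded in $\FN$, this is exactly the ``drifting witness'' leak you identify in your last paragraph. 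Your intersection trick converts that delicate uniformity claim into the one-line inclusion $Z_i\subseteq W_i$, at the cost of the easy and correctly executed check (via the liminf/limsup identities) that intersecting a generating sequence of $B(j)$ termwise with one of $B\supseteq B(j)$ still generates $B(j)$. So your route is slightly longer in setup but yields a cleaner---and arguably more complete---verification of the upper inclusion, while the lower inclusion $B\subseteq\cup\cap Z_i$ is handled essentially identically in both arguments.
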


\begin{proof}
  Let $\{(Y(j)_i)_{i\in\FN}\ ;\ j\in\FN\}$ be an arbitrarily given family of generating sequences, each of which generates $B(j)$.
  Let $X(j)_i$ be set-theoretically definable class which satisfies
  \[
  X(j)_i \ =\
  \begin{cases}
    \emptyset & \text{ if }\ i<j\\
    Y(j)_i & \text{ otherwise.}
  \end{cases}
  \]
  Then, every class $X(j)_i$ is set-theoretically definable and generates $B(j)$ by Lemma \ref{fingap}. 

  Let us next confirm that $\left(\cup_{j\in\FN} X(j)_i\right)_{i\in\FN}$ is a generating sequence of $B$.
  Suppose there exists $b\in B(j)$ for some $j\in\FN$ which satisfies $b\notin \cup\cap(\cup_{j\in\FN} X(j)_i)$.
  Then, there always exists $k>i$ which satisfies $b\notin X(j)_k$ for all $i\in\FN$.
  It contradicts with that $b\in\cup\cap X(j)_i$ since $\cup\cap X(j)_i=B(j)$.
  Thus, $B \subseteq\cup\cap\left(\cup_{j\in\FN} X(j)_i\right)$ holds.

  Secondly, suppose that there exists $b\in\cap\cup(\cup_{j\in\FN} X(j)_i)$ which satisfies $b\notin B$.
  Then, there exists $i\in\FN$ which satisfies $b\notin X(j)_k$ for all $k>i$ for all $j\in\FN$.
  It implies that $b\notin \cap\cup\left(\cup_{j\in\FN} X(j)_i\right)$.
  Thus, $\cap\cup(\cup_{j\in\FN} X(j)_i)\subseteq B$ and, finally, the equations $\cap\cup(\cup_{j\in\FN} X(j)_i)=\cup\cap(\cup_{j\in\FN}(X(j)_i)=B$ hold.
\end{proof}

\section{Cuts and Their Arithmetic}\label{cuts}
To estimate size of classes and the ratio between them, let us introduce \textit{cuts} of rational numbers\footnote{Cuts are originally defined as initial segments of natural numbers by Kalina and Zlato\v{s}\cite{Kalina-Zlatos1988}.
To mention these cuts, let us call them \textit{cuts consisting of naturals}, separately.
The basic properties of natural cuts concerned here are thoroughly examined by Kalina and Zlato\v{s}\cite{Kalina-Zlatos1988, Kalina-Zlatos1989-a, Kalina-Zlatos1989-b} and Sochor\cite{sochor1988a,sochor1988b}.
}.
A class $A$ is a \textit{cut} of rationals if
\[
\left(\forall a,b\in Q^+\right)
\left(\left(a\leq b \wedge b\in A\right)\ \Rightarrow\ \left(a\in A\right)\right)
\wedge
\left(\forall a\in A\right)\left(\exists b\in A\right)\left(a< b\right)
\]
where $Q^+$ represents the class of non-negative rationals.
The order $\leq$ on the family of all cuts is given by inclusion.
Given any non-negative rational $q\in Q^+$, $\isrc{q}$ denotes a cut of $q$ given as
\[
\isrc{q}
\ =\ \left\{ x\in Q^+;\  x< q\right\}.
\]
When $q\in N$, it is said to be a \textit{cut of natural number}.

Two types of cuts, one of which is a countable union and the other is intersection, are of very importance.
Both of them are constructed with a sequence $(a_i)_{i\in\FN}$ of finite rationals which satisfies, for each $n\in \FN$, $a_j>n$ for some $j\in\FN$, respectively, as
\[
\isrc{\frac{\FN}{1}}\ =\ \cup_{i\in\FN}\isrc{a_i},\qquad\text{ and }\qquad
\isrc{\frac{1}{\FN}}\ =\ \cap_{i\in\FN}\isrc{\frac{1}{a_i}}.
\]
The class of all Borel classes of cuts, which is a $\sigma$-ring, is denoted as $\mathscr{B}_Q$.

Binary operations between any two arbitrarily given cuts $A$ and $B$ are defined as follow, where $B'$ denotes $B\cup\{\min{(Q^+\setminus B)}\}$:
\footnote{For more detailed treatment of cuts, Kalina and Zlato\v{s}\cite{Kalina-Zlatos1988,Kalina-Zlatos1989-a} and Sochor\cite{sochor1988a,sochor1988b} are helpful.}
\begin{eqnarray*}
A\upls B & = & \left\{
 x\in Q^+\,;\, \left(\forall a\notin A\right)
 \left(\forall b\notin B\right)
 \left(x< a+b\right)
\right\};\  \text{external sum}\\
A\dpls B & = & \left\{
 x\in Q^+\,;\, \left(\exists a\in A\right)
 \left(\exists b\in B\right)
 \left(x< a+b\right)
\right\};\ \text{internal sum}\\[.18cm]
A\umns B & = & \left\{
 x\in Q^+\,;\, \left(\exists b\notin B\right)
 \left(x+b\in A\right)
\right\};\  \text{external difference}\\
A\dmns B & = & \left\{
 x\in Q^+\,;\, \left(\forall b\in B'\right)
 \left(x+b\in A\right)
\right\};\  \text{internal difference}\\[.18cm]
A{\utms} B & = & \left\{
 x\in Q^+\,;\, \left(\forall a\not\in A\right)
 \left(\forall b\not\in B\right)
 \left(x< a\cdot b\right)
\right\};\  \text{external product}\\
A{\dtms} B & = & \left\{
 x\in Q^+\,;\, \left(\exists a\in A\right)
 \left(\exists b\in B\right)
 \left(x< a\cdot b\right)
\right\};\  \text{internal product}\\[.18cm]
A{\udiv} B & = & \left\{
 x\in Q^+\,;\, \left(\exists b\not\in B\right)
 \left(x\cdot b\in A\right)
\right\};\  \text{external quotient}\\
A{\ddiv} B & = & \left\{
 x\in Q^+\,;\, \left(\forall b\in B'\right)
 \left(x\cdot b\in A\right)
\right\};\  \text{internal quotient}.
\end{eqnarray*}
Both types of sum and product satisfy the commutative and associative law.
When both types of calculations 
coincide, they are simply denoted as $A+B$, $A-B$, $A\cdot B$, and $A/B$ or $\frac{A}{B}$, respectively.

Arithmetic of cuts of rational numbers coincide with that of rationals as we shall see below, however, arithmetic of cuts of non-rational number involves a somewhat strange feature, that is, \textit{additivity}.

A cut $A$ is said to be \textit{additive} if it satisfies $A+A=A$.
Otherwise, it is \textit{nonadditive}.
Two of those additives that are of great significance are $\isrc{\frac{\FN}{1}}$ and $\isrc{\frac{1}{\FN}}$ as it is mentioned above.

It is easily confirmed that arithmetic between cut of  $q\in Q$ and a cut $A$ is given as
\begin{eqnarray*}
  \isrc{q} \upls A \ =\  \isrc{q} \dpls A &  = &  \isrc{q} +A\\[.18cm]
  \isrc{q} \utms A \ =\  \isrc{q} \dtms A &  = &  \isrc{q} \cdot A\\[.18cm]
  \isrc{q} \umns A \ =\  \isrc{q} \dmns A &  = &  \isrc{q} -A, \\ 
  A \umns \isrc{q} \ =\  A \dmns \isrc{q} &  = &  A-\isrc{q}\\[.18cm]
  \isrc{q} \udiv A \ =\  \isrc{q} \ddiv A & = &  \isrc{q} / A, \\
  A \udiv \isrc{q} \ =\  A \ddiv \isrc{q} & = &  A/\isrc{q}.
\end{eqnarray*}
As long as it concerned with at least one cut of specific rational number, both types of operations coincide.
However, once both cuts are $\sigma$- or $\pi$-classes, they may differ.
To see when that happens, let us first check consistency between arithmetic of rationals and of cuts of rationals.

As it is easily confirmed, the sum of two cuts of rationals, say $\isrc{q}+\isrc{p}$, coincides with the cut of the sum of these rationals, say $\isrc{q+p}$.
This result is also valid with other three types of arithmetic.
To avoid repetition, let us demonstrate the result only for $+$ and $-$ omitting for $\cdot$ and $/$.
However, these results are valid if $+$, $-$, the identity element $0$ and the operation $p(x+y)$ are replaced with $\cdot$, $/$, $1$ and $(x\cdot y)^p$, respectively.

\begin{lemma}\label{sep-add}
  For each rational $q\in Q$ and a sequence of rationals $(a_i)_{i\in\FN}$, the following equation holds
  \[
  \isrc{q}+ A\ =\ 
  \begin{cases}
  \cup_{i\in\FN}\isrc{q+ a_i} & \text{if }A=\cup \isrc{a_i}\\
  \cap_{i\in\FN}\isrc{q+ a_i} & \text{if }A=\cap \isrc{a_i}.
  \end{cases}
  \]
\end{lemma}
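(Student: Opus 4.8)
The plan is to unfold the two sum-definitions and to lean on the fact recorded just above the statement, namely that whenever one summand is the cut $\isrc{q}$ of a rational the internal and external sums agree, $\isrc{q}\upls A=\isrc{q}\dpls A=\isrc{q}+A$. This allows me to evaluate $\isrc{q}+A$ from whichever of $\isrc{q}\dpls A$ or $\isrc{q}\upls A$ is the more convenient in each of the two cases. Throughout I read ``$a\in A$'' and ``$a\notin A$'' as quantifying over $Q^+$, so that $a\in\isrc{q}$ abbreviates $a<q$ and $a\notin\isrc{q}$ abbreviates $a\ge q$, and I use freely that a cut contains no largest element.

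For $A=\cup_{i\in\FN}\isrc{a_i}$ I would work with the internal sum $\isrc{q}\dpls A$. The inclusion $\subseteq$ is immediate: a witnessing pair $a<q$, $b\in A$ with $x<a+b$ satisfies $b<a_i$ for some $i$, so $x<a+b<q+a_i$ and thus $x\in\isrc{q+a_i}\subseteq\cup_{i}\isrc{q+a_i}$. For $\supseteq$, if $x<q+a_i$ for some $i$ then, since the values $a+b$ with $a<q$ and $b<a_i$ have supremum $q+a_i$ which is not attained, there are $a,b\in Q^+$ with $a<q$, $b<a_i$ and $a+b>x$; this displays $x\in\isrc{q}\dpls A$. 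Hence $\isrc{q}\dpls A=\cup_{i}\isrc{q+a_i}$, which is the asserted value of $\isrc{q}+A$.

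For $A=\cap_{i\in\FN}\isrc{a_i}$ the external sum $\isrc{q}\upls A$ is the convenient form, and here both inclusions reduce to choosing the right witnesses. For $\subseteq$, observe $q\notin\isrc{q}$ and, for each $i$, $a_i\notin\isrc{a_i}$ whence $a_i\notin A$; substituting the admissible pair $a=q$, $b=a_i$ into the universal clause defining $\upls$ gives $x<q+a_i$, and since $i$ is arbitrary $x\in\cap_{i}\isrc{q+a_i}$. For $\supseteq$, assume $x<q+a_i$ for all $i$ and take any admissible $a\ge q$ and $b\notin A$; then $b\notin\isrc{a_j}$, i.e.\ $b\ge a_j$, for some $j$, so $a+b\ge q+a_j>x$, proving $x\in\isrc{q}\upls A$. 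Thus $\isrc{q}\upls A=\cap_{i}\isrc{q+a_i}$.

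The only genuinely delicate point — and the reason the two cases are handled through different operations — is the appeal to the internal/external coincidence: the internal sum collapses to $\emptyset$ when $\isrc{q}$ or $A$ is empty, so in such boundary situations it is the external form that carries the correct value, and one must either restrict attention to the intended regime ($q>0$ with the $a_i$ positive) or verify the degenerate cases directly via $\upls$. The value-splitting in the $\supseteq$ half of the union case is the one computational step, and it uses nothing beyond the cut axiom $(\forall a\in A)(\exists b\in A)(a<b)$.
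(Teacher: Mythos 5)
Your proof is correct and follows essentially the same route as the paper's: both unfold the internal/external sum definitions, rely on the coincidence $\isrc{q}\upls A=\isrc{q}\dpls A=\isrc{q}+A$ recorded just before the lemma, pick witnesses strictly below the unattained supremum $q+a_i$ in the union case, and exploit the universal clause of the external sum in the intersection case. Your explicit caveat about the degenerate situation $\isrc{q}=\emptyset$ is a point the paper silently glosses over, but otherwise the arguments match step for step.
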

\begin{proof}
  Suppose $x\in \isrc{q}+ A$.
  Then, there exist $p\in \isrc{q}$ and $a\in A$ which satisfy $x<p+ a$.
  If $A=\cup \isrc{a_i}$, there exists $j\in\FN$ which satisfies $a<\max_{i\in j}a_i$.
  It implies that $x<p+ a<q+ \max_{i\in j}a_i$, so that $x\in\cup\isrc{q+ a_i}$ holds.
  If, on the other hand, $A=\cap \isrc{a_i}$, then $a<a_i$ holds for all $i\in\FN$.
  It implies that $x<p+ a<q+ a_i$ for all $i\in\FN$, so that $x\in\cap\isrc{q+ a_i}$ holds.

  Conversely, suppose $x\in\cup\isrc{q+ a_i}$.
  Then, there exists $j\in\FN$ which satisfies $x< q+ a$, in which $a=\max_{i\in j}a_i$.
  Choose $c\in\left(0,\frac{q+ a- x}{2}\right)$.
  Then, $q- c\in \isrc{q}$, $a- c\in A$ and $x<(q- c)+(a- c)$.
  It implies that $x\in \isrc{q}+ A$.

  Suppose, on the other hand, $x\in\cap\isrc{q+ a_i}$.
  Since $p\geq q$ for all $p \not\in\isrc{q}$ and there exists $j\in\FN$ for each $a\not\in\cap\isrc{a_i}$ which satisfies $a_j<a$, the inequation $x<q+ a_j<p+ a$ holds for all $p\not\in \isrc{q}$ and $a\not\in\cap\isrc{a_i}$.
  It implies that $x\in \isrc{q}+ A$.
\end{proof}

\begin{lemma}\label{sep-sub}
  For each rational $q\in Q$ and a sequence of rationals $(a_i)_{i\in\FN}$, the following equation holds
  \[
  \isrc{q}- A\ =\ 
  \begin{cases}
  \cup_{i\in\FN}\isrc{q- a_i} & \text{if }A=\cap \isrc{a_i}\\
  \cap_{i\in\FN}\isrc{q- a_i} & \text{if }A=\cup \isrc{a_i}.
  \end{cases}
  \]
\end{lemma}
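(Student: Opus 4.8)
The plan is to mirror the proof of Lemma~\ref{sep-add}, exploiting that, since $\isrc{q}$ is a cut of a rational, the external and internal differences coincide; this lets me evaluate $\isrc{q}\umns A$ or $\isrc{q}\dmns A$ interchangeably, choosing whichever is convenient for each inclusion. Because subtraction reverses inclusion in the second argument, a union representation of $A$ yields an intersection in the result and vice versa, which is exactly the swap of cases in the statement. Throughout I assume, as for the canonical cuts, that the $a_i$ are nonnegative rationals, so that each $\isrc{a_i}$ is a genuine cut and the displayed unions and intersections are cuts.

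For $A=\cap\isrc{a_i}$ I would use the external difference $\isrc{q}\umns A=\{x\in Q^+\,;\,(\exists b\notin A)(x+b<q)\}$. For $\subseteq$, if $x+b<q$ with $b\notin\cap\isrc{a_i}$, then $b\ge a_i$ for some $i$, so $x+a_i\le x+b<q$ and hence $x\in\isrc{q-a_i}\subseteq\cup\isrc{q-a_i}$. For $\supseteq$, if $x\in\isrc{q-a_j}$ then $x+a_j<q$, and since $a_j\not<a_j$ we have $a_j\notin\cap\isrc{a_i}$, so $b:=a_j$ witnesses $x\in\isrc{q}\umns A$. Both halves are routine and need no extra input.

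For $A=\cup\isrc{a_i}$ the inclusion $\subseteq$ is again routine from the external difference: $b\notin\cup\isrc{a_i}$ forces $b\ge a_i$ for every $i$, whence $x+a_i\le x+b<q$ for all $i$ and $x\in\cap\isrc{q-a_i}$. The reverse inclusion is the delicate one, and here I would switch to the internal difference $\isrc{q}\dmns A=\{x\in Q^+\,;\,(\forall b\in A')(x+b<q)\}$ with $A'=A\cup\{\min(Q^+\setminus A)\}$, and take $x$ with $a_i<q-x$ for all $i$. For $b\in A$ we have $b<a_j<q-x$ for some $j$, so $x+b<q$ at once. The whole difficulty concentrates on the adjoined boundary point $b_0=\min(Q^+\setminus A)$, when it exists: I must verify $x+b_0<q$, i.e.\ $b_0<q-x$.

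The hard part is precisely this last step. Since $a_i<q-x$ for all $i$, the value $q-x$ itself lies in $Q^+\setminus A$, so $b_0\le q-x$; what remains is to exclude $b_0=q-x$. Equality here would force the cut $A=\cup\isrc{a_i}$ to have an \emph{unattained} jump at $q-x$, i.e.\ every rational below $q-x$ would lie in some $\isrc{a_i}$ while no $a_i$ equals $q-x$. I would rule this out by the characteristic monadic-gap property of a countably generated cut: an $\FN$-sequence of rationals strictly below $q-x$ cannot be cofinal in the monad immediately below $q-x$, so there is always a rational $b$ with $b_0\le b<q-x$ and $b\notin A$. This simultaneously shows $\min(Q^+\setminus A)<q-x$ and, read through the external difference, exhibits $b$ as a witness for $x\in\isrc{q}\umns A$. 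When the $a_i$ lie in $\FQ$ this is immediate, since a finite rational strictly below $q-x$ is bounded away from it by some $\tfrac1n$, leaving the intervening monad outside $A$. Finally, the identities for $\cdot$ and $/$ require no separate argument: they follow by the substitution announced before the lemma, replacing $+,-,0$ and $p(x+y)$ with $\cdot,/,1$ and $(x\cdot y)^p$.
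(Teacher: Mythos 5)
Your proposal follows the same overall route as the paper's proof: split on whether $A=\cap\isrc{a_i}$ or $A=\cup\isrc{a_i}$, and verify each inclusion directly from the definitions of the external/internal differences. Three of your four inclusions are verbatim the paper's arguments. The one place you genuinely diverge is the inclusion $\cap_{i}\isrc{q-a_i}\subseteq\isrc{q}-A$ for $A=\cup\isrc{a_i}$: the paper simply observes that every $a\in A$ satisfies $a<a_j$ for some $j$, hence $x+a<x+a_j<q$, and stops there; it never produces an external witness $b\notin A$ with $x+b<q$, nor checks the adjoined point $\min(Q^+\setminus A)$ required by the internal difference. You correctly identify this as the only non-routine step and supply the missing ingredient: a rational $b<q-x$ lying outside $\cup\isrc{a_i}$. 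That ingredient is real and is needed if one works with the external difference, so your version is the more complete of the two.

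Two caveats on how you justify that step. First, your claim that for $a_i\in\FQ$ each $a_i$ is ``bounded away from $q-x$ by some $\tfrac1n$'' does not give a \emph{uniform} bound (take $a_i=c-\tfrac1i$); what actually saves you in that case is that the lower monad of $q-x$ is then disjoint from $\cup\isrc{a_i}$ and is nonempty, so any of its elements is a witness. Second, in the general case the correct engine is not a vague ``monadic-gap property'' but the prolongation axiom/overspill: extend $(a_i)_{i\in\FN}$ to a set function on some $\gamma\in N\setminus\FN$, pick an infinite $\eta\leq\gamma$ with $a_i<q-x$ for all $i<\eta$, and set $b=\max_{i<\eta}a_i$; then $b<q-x$ and $b\geq a_i$ for all $i\in\FN$, so $b\notin\cup\isrc{a_i}$. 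With that substitution your argument is sound, and it in fact repairs a step the paper leaves implicit (the paper leans on its earlier unproved assertion that $\isrc{q}\umns A=\isrc{q}\dmns A$ to absorb the boundary issue).
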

\begin{proof}
  Suppose $x\in \isrc{q}- A$.
  Then, there exist $a\not\in A$ which satisfies $x+ a<q$.
  If $A=\cap \isrc{a_i}$, there exists $j\in\FN$ which satisfies $\min_{i\in j}\{a_i\}<a$.
  It implies that $x+ \min_{i\in j}\{a_i\}<x+ a<q$, so that $x\in\cup\isrc{q- a_i}$ holds.
  If, on the other hand, $A=\cup \isrc{a_i}$, then $a_i<a$ holds for all $i\in\FN$.
  It implies that $x+ a_i<x+ a<q$ for all $i\in\FN$, so that $x\in\cap\isrc{q- a_i}$ holds.

  Conversely, suppose $x\in\cup\isrc{q- a_i}$.
  Then, there exists $j\in\FN$ which satisfies $x+ a_j<q$, so that $x\in \isrc{q}- A$ holds
  since $a_j\not\in\cap\isrc{a_i}=A$,

  Suppose, on the other hand, $x\in\cap\isrc{q- a_i}$.
  Then, for each $a\in A$ there exists $j\in\FN$ which satisfies $a<a_j$ since $A=\cup\isrc{a_i}$.
  It implies that $x+ a<x+ a_j<q$, so that $x\in \isrc{q}- A$.
\end{proof}

\begin{lemma}
  For each rational $q\in Q$ and a sequence of rationals $(a_i)_{i\in\FN}$, the following equation holds
  \[
  A- \isrc{q}\ =\ 
  \begin{cases}
  \cup_{i\in\FN}\isrc{a_i- q} & \text{if }A=\cup \isrc{a_i}\\
  \cap_{i\in\FN}\isrc{a_i- q} & \text{if }A=\cap \isrc{a_i}.
  \end{cases}
  \]
\end{lemma}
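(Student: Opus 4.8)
The plan is to reduce the stated identity to the single transparent description $A - \isrc{q} = \{x \in Q^+ ; x + q \in A\}$, valid for $q \ge 0$ (the range of interest, since $\isrc{q}$ is a genuine proper cut only then). To obtain this description I would start from the coincidence $A \umns \isrc{q} = A \dmns \isrc{q} = A - \isrc{q}$ recorded earlier, and work with the external difference $A \umns \isrc{q} = \{x \in Q^+ ; (\exists b \notin \isrc{q})(x + b \in A)\}$. Because $b \notin \isrc{q}$ is precisely $b \ge q$ and every cut is downward closed, the witness $b$ may always be replaced by its least admissible value $q$: if $x + b \in A$ for some $b \ge q$ then $x + q \le x + b$ forces $x + q \in A$, while conversely $x + q \in A$ is itself the witness $b = q$ (legitimate since $q \notin \isrc{q}$). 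This establishes the description and isolates the only real computation, namely translating the membership test $x + q \in A$ through the union or the intersection defining $A$.

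For $A = \cup_{i\in\FN} \isrc{a_i}$ I would unfold $x + q \in \cup_i \isrc{a_i}$ as the existence of $j \in \FN$ with $x + q < a_j$; this is equivalent to $x < a_j - q$, i.e. $x \in \isrc{a_j - q}$, and taking the union over $j$ yields $A - \isrc{q} = \cup_i \isrc{a_i - q}$, the two inclusions being read off from the one equivalence in opposite directions. For $A = \cap_{i\in\FN} \isrc{a_i}$ the identical substitution turns $x + q \in \cap_i \isrc{a_i}$, that is $x + q < a_i$ for every $i$, into $x < a_i - q$ for every $i$, i.e. $x \in \cap_i \isrc{a_i - q}$. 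In both cases the argument amounts to applying the order isomorphism $r \mapsto r - q$ of $Q$ to each defining bound, so that the union/intersection structure is preserved; should a presentation in closer lockstep with Lemmas \ref{sep-add} and \ref{sep-sub} be preferred, one splits each case into the forward inclusion (extract $b \ge q$ with $x + b \in A$ and deduce the bound from $x + q \le x + b$) and its converse (exhibit the witness $b = q$).

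The main point needing care — and the only genuine obstacle — is the behaviour at the boundary: when $a_i \le q$ the cut $\isrc{a_i - q}$ collapses to $\emptyset$, so I would verify that the equivalences above remain correct once some translated bounds are non-positive, which the restriction to $x \in Q^+$ together with the strictness of the inequalities makes automatic. I would also note explicitly that the identity, and hence the lemma, is meant for $q \ge 0$: for $q < 0$ the cut $\isrc{q}$ is empty and $A - \isrc{q}$ reduces to $A$, a degenerate situation to be read under the standing convention that the subtraction of cuts is taken with non-negative subtrahend.
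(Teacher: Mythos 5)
Your proposal is correct and follows essentially the same route as the paper: both arguments use the external-difference description of $A\umns \isrc{q}$, replace the witness $b\geq q$ by $q$ itself via downward closure of cuts (forward direction) and take $b=q\notin\isrc{q}$ as the witness (converse), and then unfold the union or intersection defining $A$. Your additional remarks on the degenerate boundary cases are harmless extras the paper omits.
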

\begin{proof}
  Suppose $x\in A- \isrc{q}$ and $A=\cup\isrc{a_i}$.
  Then, there exist $p\geq q$ and $j\in\FN$ which satisfy $x+ p< a_j$.
  Since $x+ q\leq x+ p<a_j$, $x\in\cup\isrc{a_i- q}$ holds.
  If, on the other hand, $A=\cap \isrc{a_i}$.
  Then, there exist $p\geq q$ which satisfies $x+ p< a_i$ for all $i\in\FN$.
  Since $x+ q\leq x+ p<a_i$, so that $x\in\cap\isrc{a_i- q}$ holds.

  Conversely, suppose $x\in\cup\isrc{a_i- q}$.
  Then, there exists $j\in\FN$ which satisfies $x+ q<a_j$, so that $x\in A-\isrc{q}$ holds
  since $q\not\in\isrc{q}$.

  Suppose, on the other hand, $x\in\cap\isrc{a_i- q}$.
  Then, the inequation $x+ q<a_i$ holds for all $i\in\FN$ since $A=\cap\isrc{a_i}$.
  It implies that $x+ q<a_i$ for all $i\in\FN$, so that $x\in A-\isrc{q}$ since $q\notin \isrc{q}$.
\end{proof}

By these lemmata, it is confirmed that cuts of rational numbers can be calculated exactly the same manner as rationals.
Accordingly, cuts of rationals will be described simply as rationals without mentioning, hereafter, to ease the notation unless there are any confusions.

However, these equalities are no longer valid once both cuts are $\sigma$- or $\pi$-cuts, typically when they are additive.
By Lemma \ref{sep-add} and \ref{sep-sub}, every $\sigma$- or $\pi$-cuts can be separated into two cuts: a cut of rational number and an additive one, say $\alpha+\frac{\FN}{1}$ or $\alpha-\frac{\FN}{1}$.
And additive cuts, say $\frac{\FN}{1}$ in this case, is the source of the problem.
For example, the following internal/external sums are unequal for any rational cut $\alpha>\frac{\FN}{1}$:
\[\textstyle
\alpha-\frac{\FN}{1}\dpls\frac{\FN}{1}\ =\
\alpha-\frac{\FN}{1}\ \ne\ \alpha+\frac{\FN}{1}
\ =\ \alpha-\frac{\FN}{1}\upls\frac{\FN}{1}.
\]

This inequality exposes the unique characteristics of additive cuts to smooth off minute differences unless they exceed the size of the cut itself.
In fact, even if the cut $\frac{\FN}{1}$ is added internally to $\alpha-\frac{\FN}{1}$, the result remain stable at $\alpha-\frac{\FN}{1}$ since every $b\in\frac{\FN}{1}$ and $c\in \alpha-\frac{\FN}{1}$ satisfy $b+c\in\alpha-\frac{\FN}{1}$.
The least cut which is smaller than any other cuts obtained by adding infinite cuts to $\alpha-\frac{\FN}{1}$ is the external sum $\alpha-\frac{\FN}{1}\upls\frac{\FN}{1}=\alpha+\frac{\FN}{1}$.

This property of additive cuts, which absorb small cuts, can be utilized to capture equivalence of cuts.
To formulate the equivalence, let us first define the interior and closure\footnote{
  Equivalently, they can be defined as $\intrr{Q}{A}=Q^+\setminus \cl{Q}{A^c}$ and $\cl{Q}{A}=\fig{Q}{A}$ of a quasi-continuum $\mathscr{Q}=\langle Q^+,\doteq_Q\rangle$, in which $\doteq_Q$ is define as $a\doteq_Q b$ iff $a=b=0$ or $\left((a,b\ne 0)\wedge(\frac{a}{b}\doteq 1)\right)$.
  Notice that $\mathscr{Q}=\langle Q,\doteq_Q\rangle$ remains quasi-continuum since $\doteq_Q$ is not compact.
  See Sakahara and Sato \cite{topology} for more details.} of a cut $A$ as
\begin{eqnarray*}
\intrr{}{A} & = &\textstyle A\dmns A\ddiv\frac{\FN}{1};\\
\cl{}{A} & = &\textstyle A\dpls A\ddiv{\frac{\FN}{1}}.
\end{eqnarray*}
Then, a pair of cuts are said to be equivalent when one cut is in between the other's interior and closure, denoted by equivalence $\approx$ as
\[
A\approx B\ \equiv\ \intrr{}{A}\leq B\leq \cl{}{A}.
\]
%
In other words, two cuts are equivalent if differences between the two is smaller than the biggest additive cuts which cannot absorb them.
In fact, the difference between internal and external sum of $\alpha-\frac{\FN}{1}$ and $\frac{\FN}{1}$ is negligible  since $\alpha/\frac{\FN}{1}> \frac{\FN}{1}$ for all $\alpha\in N\setminus\FN$ and, thus, $\alpha-\frac{\FN}{1}\approx\alpha+\frac{\FN}{1}$. 
Needless to say that $\alpha/\frac{\FN}{1}$ is the maximal additive cut which does not absorb  $\alpha$.

The properties which can be drawn from these characteristics of additive cuts are summarized in the following lemmata, which are extensions of the statement (5) a) of Sochor \cite{sochor1988a}.
\begin{lemma}\label{s5a-1}
  For any additive cut $A$, a rational cut $\alpha>A$ and a cut $B<A$, the following equations hold:
  \[
  \alpha+A+B\ =\ \alpha+A\ =\ \alpha+A-B.
  \]
\end{lemma}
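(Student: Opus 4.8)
The plan is to collapse the whole statement onto the single cut $C:=\alpha\dpls A$. Writing $\alpha=\isrc{q}$, the identities $\isrc{q}\upls A=\isrc{q}\dpls A=\isrc{q}+A$ already recorded make $C$ unambiguous, with the explicit description $C=\{x\in Q^+\,;\,(\exists a\in A)(x<q+a)\}$. Reading $\alpha+A+B$ as $(\alpha+A)+B$ and $\alpha+A-B$ as $(\alpha+A)-B$, it then suffices to prove the four coincidences $C\dpls B=C\upls B=C$ and $C\dmns B=C\umns B=C$; the plain-sign notation in the statement is \emph{part of} what is being asserted, so each operation must be checked in both its internal and external form. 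The engine for all of this will be two consequences of $A\dpls A=A$: a \emph{closure} property, that $a_1,a_2\in A$ implies $a_1+a_2\in A$ (using that $A$ has no greatest element), and its \emph{co-closure} dual, that $a^\ast\in A$ together with $z\notin A$ forces $z-a^\ast\notin A$, for otherwise $z=(z-a^\ast)+a^\ast\in A\dpls A=A$.

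First I would dispatch $C\dpls B=C$ and $C\upls B=C$. The inclusions $C\subseteq C\dpls B$ and $C\subseteq C\upls B$ are routine: fixing $a_0\in A$ with $x<q+a_0$ for a given $x\in C$, any $c\notin C$ obeys $c\ge q+a_0>x$, so $x<c+b$ for every $b\ge0$, while any positive $b\in B$ gives $x<x+b$ with $x\in C$. For $C\dpls B\subseteq C$ one takes $x<c+b$ with $c<q+a$, $a\in A$ and $b\in B\subseteq A$, closes up $a+b\in A$, and reads off $x<q+(a+b)$, i.e. $x\in C$. The hard part is $C\upls B\subseteq C$, and it is here that additivity does the real work: assuming $x\notin C$, hence $x-q\notin A$, the strict inequality $B<A$ furnishes a witness $a^\ast\in A\setminus B$, which doubles as an admissible $b\notin B$; co-closure gives $(x-q)-a^\ast\notin A$, so $c:=x-a^\ast\notin C$ (and if $x<a^\ast$ then $x<q+a^\ast$ already puts $x\in C$), whereupon $c+a^\ast=x$ contradicts the defining universal inequality of $C\upls B$. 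I expect this overshoot-prevention step to be the main obstacle.

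The difference equalities $C\umns B=C$ and $C\dmns B=C$ proceed symmetrically. The inclusions $C\umns B\subseteq C$ and $C\dmns B\subseteq C$ are the easy direction, the first by downward closure of $C$ (from $x+b\in C$ with $b\ge0$) and the second by evaluating at $0\in B'$. For the reverse inclusions I would once more combine closure with the strictness $B<A$: for $x\in C$ with $x<q+a_0$ and a witness $b\in A\setminus B$, closure gives $a_0+b\in A$, so $x+b<q+(a_0+b)$ yields $x+b\in C$, settling $C\subseteq C\umns B$; the identical computation runs for \emph{every} $b\in B'$ once one notes $B'\subseteq A$ (because $\min(Q^+\setminus B)\le a^\ast$ for any $a^\ast\in A\setminus B$ and $A$ is downward closed), settling $C\subseteq C\dmns B$. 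The one delicate point is that the boundary term $\min(Q^+\setminus B)$ defining $B'$ need not exist for a non-jump cut such as $\isrc{\frac{1}{\FN}}$, in which case $B'=B$ and nothing changes; this should simply be flagged. Finally, $\alpha>A$ enters only to keep $q>0$ and the cuts nondegenerate, and the lone degenerate case to exclude is $B=\emptyset$, where the internal sum $C\dpls B$ collapses to $\emptyset$; for every nonempty $B$ all four identities hold as stated, operation-independently.
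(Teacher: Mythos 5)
Your proof is correct and follows essentially the same route as the paper's: everything reduces to the two nontrivial inclusions $(\alpha+A)\upls B\subseteq\alpha+A$ and $\alpha+A\subseteq(\alpha+A)\umns B$, and both are discharged by combining closure of $A$ under addition (the paper states this contrapositively, as $a\notin A\Rightarrow a/2\notin A$, which is the same fact as your closure/co-closure pair) with a witness in $A\setminus B$ supplied by $B<A$. Your explicit flagging of the degenerate case $B=\emptyset$ (where $C\dpls B$ collapses to $\emptyset$) and of the inclusion $B'\subseteq A$ in the internal-difference step is a touch more careful than the paper's write-up, but it is not a different argument.
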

\begin{proof}
  Let us first confirm that $\alpha+A\dpls B= \alpha+A\upls B=\alpha+A$ hold.
  Since $\alpha+A\subseteq \alpha+A\dpls B\subseteq \alpha+A\upls B$, it is sufficient to show that $\alpha+A\upls B\subseteq \alpha+A$.
  Let $x\in \alpha+A\upls B$.
  By definition of external sum, $x<\alpha+a+b$ holds for all $a\notin A$ and $b\notin B$.
  To show that $x\in \alpha+A$, it is sufficient to confirm that $x<\alpha+a$ for all $a\notin A$.
  Since $A$ is additive, $a/2\notin A$ holds.
  It is also true that there exists $b\notin B$ which satisfies $b<a/2$, since $B<A$.
  It implies that $x<\alpha+a/2+b<\alpha+a$ for all $a\notin A$.
  Consequently, $\alpha+A\upls B\subseteq \alpha+A$ holds.

  Let us next confirm that $\alpha+A=\alpha+A\umns B=\alpha+A\dmns B$ hold.
  Since $\alpha+A\umns B\subseteq \alpha+A\dmns B\subseteq \alpha+A$, it is sufficient to show that $\alpha+A\subseteq \alpha+A\umns B$.
  Let $x\in \alpha+A$.
  Then, $x<\alpha+a$ is satisfied for all $a\notin A$ by definition of external sum.
  Since $A$ is additive, it is satisfied that $a/2\notin A$ for all $a\notin A$.
  Since $B<A$, there exists $a\notin A$ which satisfies $a/2>b$ for each $b\notin B$.
  It implies that for each $b\notin B$ there exists $a\notin A$ which satisfies $x<\alpha+a/2<\alpha+ a-b$.
  Consequently, $x\in \alpha+A\umns B$ and $\alpha+A\subseteq \alpha+A\umns B$ holds.
\end{proof}
The next lemma also follows by the same argument. 
\begin{lemma}\label{s5a-2}
  For any additive cut $A$, and a cut of rational $\alpha> A$ and a cut $B<A$, the following equations hold:
  \[
  \alpha-A+B\ =\ \alpha-A\ =\ \alpha-A-B.
  \]
\end{lemma}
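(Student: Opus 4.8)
The plan is to imitate the proof of Lemma \ref{s5a-1}, the difference being that here the part played by the additive summand is played by $\alpha - A$ itself. First I would record that, because $\alpha$ is a cut of a rational, the internal and external difference coincide, so that $\alpha - A = \isrc{q}\umns A = \isrc{q}\dmns A = \{x\in Q^+ : \forall a\in A,\ x+a<q\}$ is unambiguous; this is what justifies writing $\alpha - A$ without a dot. As in Lemma \ref{s5a-1}, each of the two asserted equalities reduces to a single nontrivial inclusion, since the chains $\alpha - A \subseteq (\alpha-A)\dpls B \subseteq (\alpha-A)\upls B$ and $(\alpha-A)\umns B \subseteq (\alpha-A)\dmns B \subseteq \alpha - A$ hold directly from the definitions. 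Thus it suffices to prove $(\alpha-A)\upls B \subseteq \alpha - A$ for the first equality and $\alpha - A \subseteq (\alpha-A)\umns B$ for the second.

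The two facts I would use are: (a) additivity of $A$, in the form $a,b\in A \Rightarrow a+b\in A$ (equivalently $a\notin A \Rightarrow a/2\notin A$), which follows from $A\dpls A = A$ together with the no-maximum clause in the definition of a cut; and (b) the strictness $B<A$, which yields a witness $b_{0}\in A\setminus B$, so that $b_{0}\notin B$ while $b_{0}\in A$. For the first inclusion I would start from $x\in(\alpha-A)\upls B$ and specialise the universal quantifier over the complement of $\alpha - A$ to the elements $d=q-a_{0}$ with $a_{0}\in A$ (each such $d$ lies outside $\alpha - A$ because every $a\notin A$ exceeds $a_{0}$). This gives $x< q-a_{0}+b$ for all $a_{0}\in A$ and all $b\notin B$. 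For an arbitrary $a\in A$ I would then take $b_{0}\in A\setminus B$ and set $a_{0}=a+b_{0}$, which lies in $A$ by (a); substituting produces $x< q-(a+b_{0})+b_{0}=q-a$, and since $a\in A$ was arbitrary this means $x\in\alpha - A$.

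The second inclusion is dual and a little easier: for $x\in\alpha - A$ I would again choose $b_{0}\in A\setminus B$ and verify that $x+b_{0}\in\alpha - A$, i.e.\ that $x< q-(a+b_{0})$ for every $a\in A$, which is immediate from $x\in\alpha - A$ applied to the element $a+b_{0}\in A$; hence $x\in(\alpha-A)\umns B$. I expect the only genuine obstacle to be the first inclusion, and in particular the temptation to estimate against a single boundary: the complement of $\alpha - A$ is only an upward ray whose infimum is not realised by any $d=q-a$ with $a\notin A$, so a naive one-boundary bound overshoots by the size of $B$ and appears to break the inclusion. The resolution, which is the crux, is exactly the additivity manoeuvre above---rather than fixing the boundary, one enlarges the subtrahend from $a$ to $a+b_{0}$, still inside $A$, thereby absorbing the spurious contribution of $B$. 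This is the precise analogue of the halving step $a/2\notin A$ used in Lemma \ref{s5a-1}, and once it is in place the remaining manipulations are routine.
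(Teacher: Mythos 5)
Your proof is correct and follows essentially the same route as the paper: reduce each equality to the single nontrivial inclusion via the chain $\alpha-A\subseteq(\alpha-A)\dpls B\subseteq(\alpha-A)\upls B$ (resp.\ its dual), then use additivity of $A$ to absorb the $B$-perturbation. The only difference is cosmetic--you invoke additivity as $a+b_0\in A$ for a fixed witness $b_0\in A\setminus B$, where the paper uses the doubling form $2a\in A$ together with some $b\notin B$ below $a$; your variant is, if anything, slightly cleaner.
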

\begin{proof}
  Let us first confirm that $\alpha-A\dpls B= \alpha-A\upls B=\alpha-A$ hold.
  Since $\alpha-A\subseteq \alpha-A\dpls B\subseteq \alpha-A\upls B$, it is sufficient to show that $\alpha-A\upls B\subseteq \alpha-A$.
  Let $x\in \alpha-A\upls B$.
  Then $x<\alpha-a+b$ for all $a\in A$ and $b\notin B$ by definition of external sum.
  Since $A$ is additive, it is also met that $2\cdot a\in A$ for all $a\in A$. 
  Since $B<A$, there exists $b\notin B$ which satisfies $b<a$ for each $a\in A$.
  It implies that $x<\alpha-2\cdot a+b<\alpha- a$. 
  Consequently, $\alpha-A\dpls B\subseteq \alpha-A$ holds.

  Let us next turn to confirm that $\alpha-A=\alpha-A\umns B=\alpha-A\dmns B$ hold.
  Since $\alpha-A\umns B\subseteq \alpha-A\dmns B\subseteq \alpha-A$, it is sufficient to show that $\alpha-A\subseteq \alpha-A\umns B$.
  Let $x\in \alpha-A$.
  Since $A$ is additive, it is satisfied that $2\cdot a\notin A$ for all $a\notin A$, and thus, $x+2\cdot a\in \alpha$ for some $a\notin A$ by definition of external difference $\alpha\umns A$.
  Since $B<A$, there exists $b\notin B$ which satisfies $b<a$, and thus, $x+a+b\in \alpha$ since $x+a+b<x+2\cdot a$ holds.
  Consequently, $\alpha-A\subseteq \alpha-A\umns B$ follows.
\end{proof}

The cuts which play a central role in the present paper are of the form $\alpha\pm A$ in which $\alpha$ is a cut of rationals and $A$ is additive.
So, let us confirm that arithmetic between these types of cuts.

\begin{lemma}\label{mm}
  For any given $\alpha,\beta\in Q^+$ and additive cuts $A>B$, the following equations are met.
  \begin{eqnarray*}
    (\alpha-A)+(\beta-B) & = &
    \begin{cases}
      \alpha-A & \text{ if }\beta<A\\
      \alpha+\beta-A & \text{ otherwise}
    \end{cases}
  \end{eqnarray*}
\end{lemma}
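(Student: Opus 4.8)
The plan is to funnel both cases through one unconditional identity: that the internal and external sums of $\alpha-A$ and $\beta-B$ both coincide with the single cut $\alpha+\beta-A$, after which the two lines of the statement are read off by absorbing $\beta$ into $A$ exactly when $\beta<A$. Throughout I take the standing hypothesis $\alpha>A$ (as in Lemmas \ref{s5a-1} and \ref{s5a-2}); without it $\alpha-A$ degenerates to $\isrc{0}$ and the asserted equalities fail, so it is implicit in the statement. I also note that since $\beta$ is rational while $A$ is additive we have $\isrc{\beta}\ne A$, so ``$\beta\ge A$'' is really ``$\beta>A$'' and the two cases form a genuine dichotomy on cuts.

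The case $\beta<A$ I would dispatch immediately from the previous lemma. Since $\beta-B\subseteq\isrc{\beta}$ and $\isrc{\beta}<A$, the cut $\beta-B$ satisfies $\beta-B<A$, so Lemma \ref{s5a-2} applies with $\beta-B$ playing the role of its generic cut below $A$, giving at once
\[
(\alpha-A)\dpls(\beta-B)\ =\ (\alpha-A)\upls(\beta-B)\ =\ \alpha-A .
\]
This yields the first line and shows the two senses of sum already agree there.

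For $\beta>A$ I would establish $(\alpha-A)+(\beta-B)=\alpha+\beta-A$ by computing both sums. The internal sum is the easy half: unwinding the definitions, $x\in(\alpha-A)\dpls(\beta-B)$ iff $x<(\alpha+\beta)-(a+b)$ for some $a\notin A$ and $b\notin B$. The inclusion into $\alpha+\beta-A$ is then immediate by discarding the nonnegative $b$; for the reverse, given $a\notin A$ with $x<\alpha+\beta-a$ I would split $a=\tfrac a2+\tfrac a2$, using additivity of $A$ to see $\tfrac a2\notin A$ and $B\subseteq A$ to see $\tfrac a2\notin B$, so that $\tfrac a2,\tfrac a2$ are the required witnesses. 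This gives $(\alpha-A)\dpls(\beta-B)=\alpha+\beta-A$ with no use of $\beta>A$.

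The external sum is where the real work lies, and I expect it to be the main obstacle. As the internal sum is always contained in the external one, it suffices to prove $(\alpha-A)\upls(\beta-B)\subseteq\alpha+\beta-A$, which I would do by contraposition. The admissible non-elements of $\alpha-A$ include every $p=\alpha-a_1$ with $a_1\in A$ (since $\alpha-p=a_1\in A$ forces $p\notin\alpha-A$), and likewise every $r=\beta-b_1$ with $b_1\in B$ lies outside $\beta-B$; feeding these into the definition of the external sum shows that $x\in(\alpha-A)\upls(\beta-B)$ forces $x+a_1+b_1<\alpha+\beta$ for all $a_1\in A$ and $b_1\in B$. Fixing one positive $c'\in B$ and writing $M=\alpha+\beta-x$, the inequality at $b_1=c'$ reads $a_1<M-c'$ for every $a_1\in A$, i.e. $A\subseteq\isrc{M-c'}$; hence $a:=M-\tfrac{c'}2$ is a non-element of $A$ with $a<M$, so $x<\alpha+\beta-a$ and $x\in\alpha+\beta-A$. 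Combining the inclusions gives $(\alpha-A)\upls(\beta-B)=\alpha+\beta-A=(\alpha-A)\dpls(\beta-B)$, so the plain ``$+$'' is justified and equals $\alpha+\beta-A$, settling the ``otherwise'' case.

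The delicate points I would watch are that the strict gap $B<A$ is genuinely used (it is what permits a positive $c'\in B$ while the range of $a_1$ stays cofinal in $A$), that additive cuts are non-principal (needed both for $\tfrac a2\notin A$ and to locate the specific rational $M-\tfrac{c'}2$ outside $A$), and the positivity bookkeeping ensuring $p,r,\,M-\tfrac{c'}2\in Q^+$, which rests on $\alpha>A$ and $\beta>B$. The degenerate possibility $B=\isrc{0}$ carries no positive $c'$, but there $\beta-B=\isrc{\beta}$ and the claim reduces to rational-cut arithmetic of the kind recorded in Lemma \ref{sep-add}.
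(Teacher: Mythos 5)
Your proposal is correct, and its skeleton is the same as the paper's: sandwich the sum via $(\alpha-A)\dpls(\beta-B)\subseteq(\alpha-A)\upls(\beta-B)\subseteq\alpha+\beta-A$, show the internal sum already equals $\alpha+\beta-A$, and absorb $\beta$ by Lemma \ref{s5a-2} when $\beta<A$. The difference is in what gets proved versus cited: the paper asserts the inclusion chain without argument and delegates the identity $(\alpha-A)\dpls(\beta-B)=\alpha+\beta-A$ to statement (c) of (19) of Sochor, whereas you prove both from the definitions --- the halving trick $a=\tfrac a2+\tfrac a2$ for the internal sum, and the $M-c'$ witness (feeding $p=\alpha-a_1$, $r=\beta-b_1$ into the external sum) for the upper inclusion. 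Your treatment of the first case is also organized slightly differently: you apply Lemma \ref{s5a-2} directly with $\beta-B$ as the small cut, which settles $\dpls$ and $\upls$ simultaneously, while the paper first establishes $\alpha+\beta-A$ unconditionally and only then collapses it to $\alpha-A$ via Lemma \ref{s5a-2} applied to $\isrc{\beta}$. Both routes are sound; yours buys a self-contained argument and makes explicit two points the paper leaves tacit --- the standing hypothesis $\alpha>A$ (without which $\alpha-A$ is empty and the statement fails) and the degenerate case $B=\isrc{0}$, where your positive $c'\in B$ does not exist and one must fall back on rational-cut arithmetic as in Lemma \ref{sep-add}.
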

\begin{proof}
  Since $(\alpha-A)\dpls(\beta-B)\subseteq (\alpha-A)\upls(\beta-B)\subseteq \alpha+\beta-A$, it is sufficient to show that $(\alpha-A)\dpls(\beta-B)=\alpha+\beta-A$.
  It is proven by the statement (c) of (19) of Sochor \cite{sochor1988a}.
  When $\beta<A$, the equation $\alpha+\beta-A=\alpha-A$ follows by Lemma \ref{s5a-2}.
\end{proof}

\begin{lemma}\label{pmmp}
  For any given $\alpha,\beta\in Q^+$ and additive cuts $A>B$, the following equation holds:
  \begin{eqnarray*}
    (\alpha\pm A)+(\beta\mp B) & = &
    \begin{cases}
      \alpha\pm A & \text{ if }\beta<A\\
      \alpha+\beta\pm A & \text{ otherwise.}
    \end{cases}
  \end{eqnarray*}
\end{lemma}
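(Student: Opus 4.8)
The symbols $\pm$ and $\mp$ bundle two assertions: the \emph{top-sign} identity $(\alpha+A)+(\beta-B)=\cdots$ and the \emph{bottom-sign} identity $(\alpha-A)+(\beta+B)=\cdots$, and in each I would split on whether $\beta<A$ or $\beta\geq A$. Throughout I would use the operative reading of this section that $\alpha>A$, which is what makes $\alpha\pm A$ the intended central cuts and what Lemma \ref{mm} already needs. The plan is to reduce everything to Lemmas \ref{s5a-1} and \ref{s5a-2} for the absorption of sub-$A$ cuts, to the rational-cut consistency of Lemmas \ref{sep-add} and \ref{sep-sub} for moving a rational summand across an additive cut, and---only in the genuinely mixed case---to the sandwich already employed in Lemma \ref{mm}.

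The line $\beta<A$ is immediate. For the bottom sign, $\beta<A$ and $B<A$ force $\beta+B<A$ because $A$ is additive, and Lemma \ref{s5a-2} then gives $(\alpha-A)+(\beta+B)=\alpha-A$ with internal and external sums already coinciding. Dually $\beta-B\leq\beta<A$, so Lemma \ref{s5a-1} yields $(\alpha+A)+(\beta-B)=\alpha+A$. For $\beta\geq A$ the bottom-sign identity is still clean, precisely because $+B$ is a genuine addition: by associativity $(\alpha-A)+(\beta+B)=\big((\alpha-A)+\beta\big)+B$, the rational summand moves inward via Lemmas \ref{sep-add} and \ref{sep-sub} to give $(\alpha-A)+\beta=(\alpha+\beta)-A$, and since $B<A$ and $\alpha+\beta>A$ a final use of Lemma \ref{s5a-2} absorbs $B$, leaving $\alpha+\beta-A$ (internal and external coinciding at each step).

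The top-sign identity with $\beta\geq A$ is the one I expect to be hard, and I would treat it exactly as Lemma \ref{mm}. The inclusions $(\alpha+A)\dpls(\beta-B)\subseteq(\alpha+A)\upls(\beta-B)\subseteq\alpha+\beta+A$ hold by the general internal-$\subseteq$-external inequality together with monotonicity ($\beta-B\subseteq\beta$) and the rational move $(\alpha+A)+\beta=\alpha+\beta+A$; it then suffices to show that the \emph{internal} sum already equals $\alpha+\beta+A$, which is the content of statement (c) of (19) of Sochor \cite{sochor1988a}, and the squeeze closes the argument. When this value is in hand, the $\beta<A$ simplifications above also drop out as the special case via Lemma \ref{s5a-1}.

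The main obstacle is precisely the lower bound $\alpha+\beta+A\subseteq(\alpha+A)\dpls(\beta-B)$. Unlike the bottom sign, the term $\beta-B$ carries a subtraction, so the convenient regrouping $X+(\beta-B)=(X+\beta)-B$ is simply unavailable for cuts, and no purely formal manipulation pins the value down: applying $\upls B$ to the easy inclusions shows only that the external sum and the candidate $\alpha+\beta+A$ agree \emph{up to} the sub-$A$ cut $B$, so the two are forced together only after the exact internal value is known. Establishing that value amounts to producing, for each $x\in\alpha+\beta+A$, witnesses $u\in\alpha+A$ and $v\in\beta-B$ with $x<u+v$; the needed slack comes from halving, using that $A$ is additive (so $a/2\notin A$ whenever $a\notin A$) and that $A>B$ guarantees a $v$ close enough to $\beta$---the same mechanism driving Lemmas \ref{s5a-1} and \ref{s5a-2}. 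If one prefers not to invoke Sochor's (19)(c), this representative argument supplies the internal equality directly.
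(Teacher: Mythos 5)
Your proposal is correct and follows essentially the same route as the paper: sandwich the internal and external sums between $\alpha+\beta\pm A$, settle the internal sum by citing statement (19) of Sochor (the paper invokes clause (b) here, reserving (c) for Lemma \ref{mm}), and reduce the case $\beta<A$ via Lemmas \ref{s5a-1} and \ref{s5a-2}. The additional material you supply---the elementary associativity treatment of the bottom sign and the direct witness argument for the lower bound---is sound but goes beyond what the paper's three-line proof records.
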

\begin{proof}
  Since $(\alpha\pm A)\dpls(\beta\mp B)\subseteq (\alpha\pm A)\upls(\beta\mp B)\subseteq \alpha+\beta\pm A$, it is sufficient to show that $(\alpha\pm A)\dpls(\beta\mp B)=\alpha+\beta\pm A$.
  It is proven by the statement (b) of (19) of Sochor \cite{sochor1988a}.
  When $\beta<A$, the equation $\alpha+\beta\pm A=\alpha\pm A$ follows by Lemma \ref{s5a-1} and \ref{s5a-2}.
\end{proof}

Lastly, countable sum of cuts are defined as follows.
Let $(A_i)_{i\in\FN}$ be a sequence of cuts.
Then the two types of countable sum, say, lower bound and upper bound, are defined as
\begin{eqnarray*}
\udot{\sum}_{i\in\FN} A_i
& = &
\left\{x\,;\, 
  \left(\exists j\right)
  \left(x\in A_0+A_1+\cdots+A_j\right)\right\},\\
\dot{\sum}_{i\in\FN} A_i 
& = &
\left\{x\,;\, 
  \left(\forall f\right)
  \left(
  \left(
  \begin{matrix}
    \left(\FN\subseteq\dom(f)\right)\\
    \wedge \left(\rng(f)\subseteq Q^+\right)\\
    \wedge \left(\forall i\right)
  \left(f(i)\notin A_i\right)
  \end{matrix}
  \right)
  \ \Rightarrow \ x<\sum f(i)
  \right)
\right\}.
\end{eqnarray*}

\section{Measuring Size of Borel Classes}

Let us next confirm a way to ascertain the size of classes with cuts of naturals.
To arrange cuts in order, let us introduce subvalent relations.
$n$ is said to be \textit{strictly subvalent to} $u$, in symbol $n \widehat{\prec} u$, if there exists an injective set function $f$ whose domain is $n$ and range is a proper subset of $u$.

For each class $X\subseteq V$, its \textit{lower cut} $\underline{X}$ and \textit{upper cut} $\overline{X}$\footnote{These cuts are originally defined also as initial segments of natural numbers by Kalina and Zlato\v{s}\cite{Kalina-Zlatos1988} too.
However, this difference doesn't matter since natural numbers contained in rational numbers can still count the numbers.} are defined as
\begin{eqnarray*}
\underline{X} & \equiv & 
\left\{
 q\in Q^+\,;\, \left(\exists n\in N)(\exists u\right)
 \left(u\subseteq X\wedge n\widehat{\prec} u\right) \wedge \left( q<n\right)
\right\},\\
\overline{X} & \equiv & 
\left\{
 q\in Q^+\,;\, \left(\exists n\in N)(\forall u\right)
 \left(u\supseteq X \Rightarrow n\widehat{\prec} u\right) \wedge \left( q<n\right)
\right\}.
\end{eqnarray*}
If $\underline{X}=\overline{X}$, then the common value is denoted by $|X|$ and called the \textit{cut} of $X$.

The lower and upper cuts of a given class differ when, for example, $X$ consists of $X_0=\alpha\setminus A$ and $X_1=\{0\}\times A$ where $A<\alpha$ is an additive cut.
Then, its lower and upper cuts are given as $\underline{X}=\alpha-A$ and $\overline{X}=\alpha+A$, respectively.

Fundamental properties between class operations and arithmetic of cuts are outlined in the next proposition.

\begin{prop}[Proposition 3.1.6 of Kalina and Zlato\v{s} \cite{Kalina-Zlatos1988}]\label{KZ315}
  For all classes $X,Y$ the following assertions hold:
  \begin{enumerate}[(a)]
  \item $X\cap Y=\emptyset\ \Rightarrow\ \underline{X}\dpls\underline{Y}\leq \underline{X\cup Y}$,
  \item $\overline{X\cup Y}\leq \overline{X}\upls\overline{Y}$,
    \item if there is a set-theoretically definable class $S$ such that $X\subseteq S$, $Y\cap S=\emptyset$, in short $X$ and $Y$ are separable $\Sep(X,Y)$, then $\underline{X\cup Y}=\underline{X}\dpls\underline{Y}$ and $\overline{X\cup Y}=\overline{X}\upls\overline{Y}$. 
  \end{enumerate}
  \end{prop}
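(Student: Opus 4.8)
The plan is to reduce all three parts to a single combinatorial fact about the strictly subvalent relation $\widehat{\prec}$ under unions of sets, and then to feed the witnesses supplied by the definitions of $\underline{\phantom X}$ and $\overline{\phantom X}$ through that fact. Concretely, the first thing I would isolate is the key lemma: if $m\widehat{\prec} u$ and $n\widehat{\prec} v$ with $u\cap v=\emptyset$, then $(m+n)\widehat{\prec}(u\cup v)$, proved by laying the two injections side by side on the disjoint halves and noting that the combined range stays a proper subset; and dually, if neither $m\widehat{\prec} u$ nor $n\widehat{\prec} v$ holds, then $(m+n)\widehat{\prec}(u\cup v)$ fails, proved by restricting a hypothetical injection of $m+n$ to the preimages lying in $u$ and in $v$. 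These are the additive and subadditive halves of counting that drive the two inequalities.

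For (a), I would take $x\in\underline X\dpls\underline Y$, unpack the internal sum to get $a\in\underline X$ and $b\in\underline Y$ with $x<a+b$, and then unpack the lower cuts to obtain naturals $m>a$, $n>b$ and sets $u\subseteq X$, $v\subseteq Y$ with $m\widehat{\prec} u$ and $n\widehat{\prec} v$. The hypothesis $X\cap Y=\emptyset$ forces $u\cap v=\emptyset$, so the first half of the key lemma yields $(m+n)\widehat{\prec}(u\cup v)$ with $u\cup v\subseteq X\cup Y$; since $x<a+b<m+n$, this exhibits $x\in\underline{X\cup Y}$, which is the desired inclusion $\underline X\dpls\underline Y\leq\underline{X\cup Y}$.

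For (b) I would argue within the complements of the cuts: fixing $a\notin\overline X$ and $b\notin\overline Y$, I show $x<a+b$ for every $x\in\overline{X\cup Y}$. The membership $x\in\overline{X\cup Y}$ provides a natural $k>x$ that is strictly subvalent to every superset of $X\cup Y$, while $a\notin\overline X$ and $b\notin\overline Y$ provide covers $u\supseteq X$ and $v\supseteq Y$ whose sizes are pinned below $a$ and $b$ (that is, $m\widehat{\prec} u$ and $n\widehat{\prec} v$ both fail for naturals $m,n$ just above $a,b$). Putting $w=u\cup v\supseteq X\cup Y$ and applying the second half of the key lemma bounds the size of $w$, and comparing this bound against the guaranteed subvalence of $k$ to $w$ forces $x<a+b$; as $a,b$ range over the complements, $x\in\overline X\upls\overline Y$.

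For (c), the separating definable class $S$ is exactly what upgrades the inequalities of (a) and (b) to equalities. The point is that $S$ lets one split every set $w\supseteq X\cup Y$ canonically and \emph{disjointly} as $(w\cap S)\cup(w\setminus S)$, with $w\cap S\supseteq X$ and $w\setminus S\supseteq Y$, while conversely disjoint covers of $X$ and $Y$ recombine into a cover of $X\cup Y$ of exactly the summed size. This two-way correspondence makes the size bounds used in (a) and (b) tight, so that the internal and external sums coincide and one obtains both $\underline{X\cup Y}=\underline X\dpls\underline Y$ and $\overline{X\cup Y}=\overline X\upls\overline Y$. I expect this tightness step to be the main obstacle: it requires showing that optimal covers may be taken to respect the partition induced by $S$, and that the residual discrepancies between the rational thresholds $a,b$ and the integer cover-sizes $m,n$ are absorbed — which they are precisely because the cuts in play are additive, so that bounded shifts leave them unchanged, the same mechanism already recorded in Lemmata \ref{s5a-1} and \ref{s5a-2}.
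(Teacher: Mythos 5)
First, a point of reference: the paper itself gives no proof of Proposition~\ref{KZ315} --- it is imported by citation from Kalina and Zlato\v{s} --- so there is no in-paper argument to compare yours against, and your attempt has to stand on its own. Your key lemma and your proof of (a) are correct: disjointness of the witnesses $u\subseteq X$ and $v\subseteq Y$ lets the two injections be concatenated into an injection of $m+n$ onto a proper part of $u\cup v\subseteq X\cup Y$, and $x<a+b<m+n$ then puts $x$ in $\underline{X\cup Y}$.

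The gap is in (b), at the words ``whose sizes are pinned below $a$ and $b$.'' The statement $a\notin\overline{X}$ only yields, for each \emph{natural} $m>a$, a cover $u\supseteq X$ with $\neg\, m\widehat{\prec}u$; the best bound you can extract is a cover of size at most the least natural exceeding $a$, which may be as large as $a+1$. The same holds for $b$, so your combined cover $w=u\cup v$ only gives $k<|w|\leq(a+1)+(b+1)$, hence $x<a+b+2$ rather than $x<a+b$. This is not repairable slack: with the definitions of $\underline{X},\overline{X}$ as printed in this paper, assertion (b) is actually false. Take $X=\{1,2,3\}$ and $Y=\{4,5\}$: the properness requirement in $\widehat{\prec}$ together with the extra clause $q<n$ makes the cut of a $c$-element set equal $\isrc{c-1}$, so $\overline{X}=\isrc{2}$, $\overline{Y}=\isrc{1}$, $\overline{X}\upls\overline{Y}=\isrc{3}$, yet $\overline{X\cup Y}=\isrc{4}$. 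In Kalina--Zlato\v{s}'s original setting the cuts are initial segments of $N$ and this integer granularity is invisible; the paper's translation to rational cuts introduces an off-by-one that breaks (b) and the $\overline{\phantom{X}}$ half of (c). Your closing appeal to additivity (Lemmata~\ref{s5a-1} and~\ref{s5a-2}) to absorb the residual discrepancies does not apply here: the proposition quantifies over \emph{all} classes, and the cuts of finite or set-sized classes are nonadditive, so bounded shifts do change them. To make your argument close you must either adjust the definitions of $\underline{X},\overline{X}$ so that a $c$-element set receives the cut $\isrc{c}$, or weaken (b) and (c) to hold only up to the appropriate equivalence; as written, the final quantitative step of (b) and the ``tightness'' step of (c) do not go through.
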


The strict case of the first two conditions happens when $X$ and $Y$ are not separable.
Let $X=(\alpha+\beta)\setminus A$, where $\beta<A<\alpha$ and $A$ is additive, and $Y=A$, for example, then $X\cup Y=\alpha+\beta$, so that $|X\cup Y|=\alpha+\beta$.
Nevertheless, $|X|\dpls|Y| =\alpha-A$ and $|X|\upls|Y|=\alpha+A$, so that $\underline{X}\dpls\underline{Y} <\underline{X\cup Y}$ and $\overline{X}\upls\overline{Y} >\overline{X\cup Y}$ hold.

However, these differences are negligible as far as the Borel classes are dealt since the following theorem holds.

\begin{thm}[Theorem 3.2. of Kalina and Zlato\v{s} \cite{Kalina-Zlatos1989-b}]\label{KZ32}
  Let $X\in\mathscr{B}$ and $\alpha\in N$.
  If $\underline{X}\leq \alpha\leq\overline{X}$, then
  \[
  \intrr{}{\alpha}\leq \underline{X}\leq \alpha\leq\overline{X}\leq\cl{}{\alpha}.
  \]
\end{thm}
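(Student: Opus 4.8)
The plan is to establish only the two outer inequalities $\intrr{}{\alpha}\le\underline X$ and $\overline X\le\cl{}{\alpha}$, since $\underline X\le\alpha\le\overline X$ is the hypothesis and $\intrr{}{\alpha}\le\alpha\le\cl{}{\alpha}$ holds for every cut because the interior is formed with an internal difference and the closure with an internal sum. Write $E=\alpha\ddiv\frac{\FN}{1}$ for the error cut, which is additive; the goals then read $\overline X\le\alpha\dpls E$ and $\alpha\dmns E\le\underline X$. First I would reduce these to a single claim: because $\mathscr B$ is a $\sigma$-ring containing every set and because the interior and closure are mutually dual, $\intrr{}{A}=Q^+\setminus\cl{}{A^c}$ as noted in the footnote defining them, applying the upper-cut bound to a set complement $S\setminus X\in\mathscr B$ of $X$ yields the lower-cut bound. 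Hence it suffices to prove $\overline X\le\alpha\dpls E$.

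I would prove this by induction on the Borel rank of $X$, optionally using Theorem \ref{pisigma} to present $X$ in the normal form $\cup\cap X_i=\cap\cup X_i$ over definable classes $X_i$. For the base case, where $X$ is set-theoretically definable, the crux is that the supremum of the cardinalities of set-subclasses $u\subseteq X$ and the infimum of the cardinalities of set-superclasses $w\supseteq X$ coincide, so that $\underline X=\overline X$ and the conclusion is immediate; the prototype is $\underline{\FN}=\overline{\FN}=\frac{\FN}{1}$. For the inductive step I would write $X=\cup_i Y_i$ (resp. $\cap_i Y_i$) with each $Y_i$ of strictly lower rank, apply the induction hypothesis to put each $\overline{Y_i}$ into the form $|Y_i|\dpls E$, and bound $\overline X$ from the $\overline{Y_i}$ using Proposition \ref{KZ315}(b). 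The passage from finite subunions to the full countable union alters the generating data in only finitely many places, which by Lemma \ref{fingap} does not change the class generated and contributes to the cut only an additive error that stays below $E$.

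The mechanism that keeps this error from compounding across the countably many steps is the additivity of $E$ together with the absorption Lemmas \ref{s5a-1} and \ref{s5a-2}: adding or subtracting any cut strictly below the additive cut $E$ to $\alpha\dpls E$ or to $\alpha\dmns E$ leaves it unchanged, and Lemma \ref{pmmp} then keeps the running estimate in the stable form $\alpha\pm E$ rather than letting the increments accumulate. Consequently $\overline X\le\alpha\dpls E=\cl{}{\alpha}$ throughout the induction, and dually $\intrr{}{\alpha}\le\underline X$.

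The main obstacle I anticipate is the control of the upper cut of a genuine countable union in the inductive step, and in particular the \emph{uniformity} of the error bound: one must verify that the single fixed additive cut $E=\alpha\ddiv\frac{\FN}{1}$ simultaneously dominates every error term arising along the construction, so that the absorption Lemmas \ref{s5a-1}--\ref{pmmp} apply at every stage. This is exactly where the hypothesis that $X$ is Borel — built by only $\FN$-indexed operations from determinate definable classes — is indispensable: for a class assembled by wilder operations the gap $\overline X-\underline X$ can exceed $E$, as the separated example following Proposition \ref{KZ315} makes visible, where the gap has the additive form $\pm A$ and the conclusion holds precisely when $A\le E$.
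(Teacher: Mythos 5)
First, a point of reference: the paper does not prove this statement at all --- it is imported verbatim as Theorem 3.2 of Kalina and Zlato\v{s} \cite{Kalina-Zlatos1989-b} with no proof given --- so your attempt can only be judged on its own terms, and on those terms it has a genuine gap. The entire content of the theorem lives in the passage from finite to countable unions (and intersections), and your inductive step never actually crosses that passage. Proposition \ref{KZ315}(b) controls the upper cut of a \emph{binary} (hence any finite) union, giving $\overline{\cup_{i\le n}Y_i}\le\overline{Y_0}\upls\cdots\upls\overline{Y_n}$ for each $n$, but nothing you cite bounds $\overline{\cup_{i\in\FN}Y_i}$ by the supremum of these finite approximations. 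Your justification for that jump --- that it ``alters the generating data in only finitely many places'' and is therefore covered by Lemma \ref{fingap} --- is a misapplication: that lemma says that changing finitely many \emph{terms of a generating sequence} leaves the generated class unchanged, whereas here the \emph{class itself} changes from $\cup_{i\le n}Y_i$ to $\cup_{i\in\FN}Y_i$. The assertion that this jump ``contributes to the cut only an additive error that stays below $E$'' is precisely the statement to be proved, and your closing paragraph in effect concedes that the required uniformity of the error bound is never verified. The engine that drives the original Kalina--Zlato\v{s} argument --- prolonging the countable sequence of sets to a set sequence of infinite length and using overspill to produce a set $u\supseteq X$ (resp.\ $u\subseteq X$) whose cardinality differs from $\alpha$ by less than $\alpha\ddiv\frac{\FN}{1}$ --- is entirely absent from the proposal.

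Two further problems. The duality reduction does not preserve the error cut: embedding $X$ in a set $S$ and passing to $S\setminus X$ replaces $E=\alpha\ddiv\frac{\FN}{1}$ by $\left(|S|\umns\alpha\right)\ddiv\frac{\FN}{1}$, which strictly exceeds $E$ whenever $|S|/\alpha$ is infinite; choosing $S$ with $|S|/\alpha$ finite presupposes the upper bound $\overline{X}\leq\cl{}{\alpha}$ that you have not yet established, and in any case $X$ and $S\setminus X$ are not separable in the sense of Proposition \ref{KZ315}(c), so only the one-sided inequalities (a) and (b) are available. Finally, your base-case ``prototype'' $\underline{\FN}=\overline{\FN}=\frac{\FN}{1}$ is not a base case: $\FN$ is a proper semiset, not a set-theoretically definable class, and computing its cuts already requires the countable-limit argument you are missing. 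Without that argument the induction establishes nothing beyond the finite-union case, which is the trivial part of the theorem.
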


Consequently from the theorem, The following basic properties\footnote{For more detailed properties of cuts of Borel semisets, see  \cite{Kalina-Zlatos1989-b}, \cite{Kalina1989-a} and \cite{Kalina1989-b}} of Borel classes are drawn.

\begin{cor}[Corollary 3.3. of Kalina and Zlato\v{s} \cite{Kalina-Zlatos1989-b}]\label{KZ33}
  Let $X$ be a Borel class.
  \begin{enumerate}[(a)]
  \item If either $\underline{X}$ or $\overline{X}$ is an additive cut, then $\underline{X}=\overline{X}$.
  \item If $\underline{X}$ and $\overline{X}$ are nonadditive, then there exists  $a\in N$ which satisfies $\intrr{}{a}\leq\underline{X}\leq\overline{X}\leq\cl{}{a}$.
    If additionally $\underline{X}\ne\overline{X}$, then each $a\in\overline{X}\setminus\underline{X}\cap N$ satisfies the inequality.
  \item $\underline{X}\approx\overline{X}$.
  \end{enumerate}
\end{cor}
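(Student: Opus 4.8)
The engine for all three parts is Theorem \ref{KZ32}: as soon as a cut of a natural $\isrc{a}$ is trapped between $\underline{X}$ and $\overline{X}$, both cuts are squeezed into the thin monad $\intrr{}{a}\leq\underline{X}\leq\overline{X}\leq\cl{}{a}$. Since $\underline{X}\leq\overline{X}$ always holds, the whole corollary reduces to locating a suitable natural and then exploiting the arithmetic of additive cuts from Lemmata \ref{s5a-1} and \ref{s5a-2}. The plan is to record first the two quantitative facts that drive everything: for a cut of a natural, $\isrc{a}\ddiv\frac{\FN}{1}<\isrc{a}$, so that $\cl{}{a}=\isrc{a}\dpls(\isrc{a}\ddiv\frac{\FN}{1})<\isrc{2a}$ on the one side, and $\intrr{}{a}\dpls\intrr{}{a}=\isrc{2a}\umns(\isrc{a}\ddiv\frac{\FN}{1})>\isrc{a}$ on the other (using that the additive part is idempotent). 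These say exactly that the monad of $\isrc a$ is too thin to survive doubling, which is what clashes with additivity.

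For part (a) I would argue by contradiction, treating the two cases symmetrically. Suppose $\overline{X}$ is additive and $\underline{X}<\overline{X}$. Because both $\underline{X}$ and $\overline{X}$ are generated by naturals through $\widehat{\prec}$, the strict gap is witnessed by a natural $a\in\overline{X}\setminus\underline{X}$, giving $\underline{X}\leq\isrc{a}$ and $\isrc{a}<\overline{X}$. Theorem \ref{KZ32} then forces $\overline{X}\leq\cl{}{a}$, whereas additivity gives $\overline{X}=\overline{X}\dpls\overline{X}\geq\isrc{a}\dpls\isrc{a}=\isrc{2a}$; since $\cl{}{a}<\isrc{2a}$ these bounds are incompatible, so $\underline{X}=\overline{X}$. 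The case where $\underline{X}$ is additive is the mirror image, using $\intrr{}{a}\leq\underline{X}\leq\isrc{a}$ and the doubling inequality $\intrr{}{a}\dpls\intrr{}{a}>\isrc{a}$.

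For part (b) I would split into two subcases. If $\underline{X}<\overline{X}$, pick any natural $a\in\overline{X}\setminus\underline{X}$; then $\underline{X}\leq\isrc{a}\leq\overline{X}$ and Theorem \ref{KZ32} delivers $\intrr{}{a}\leq\underline{X}\leq\overline{X}\leq\cl{}{a}$ at once, which also settles the ``additionally'' clause because the argument used nothing about $a$ beyond membership in $\overline{X}\setminus\underline{X}$. If $\underline{X}=\overline{X}=:C$, Theorem \ref{KZ32} has no purchase, since a natural strictly between the two coinciding cuts need not exist; instead I would invoke the structure theory of cuts. Writing $C=\isrc{\alpha}\pm A$ with $A$ additive, nonadditivity of $C$ forces $A<\isrc{\alpha}$ (otherwise $\isrc{\alpha}$ would be absorbed and $C$ additive), and because $\isrc{\alpha}\ddiv\frac{\FN}{1}$ is the maximal additive cut not absorbing $\isrc{\alpha}$ there is no additive cut strictly between $\isrc{\alpha}\ddiv\frac{\FN}{1}$ and $\isrc{\alpha}$; hence $A\leq\isrc{\alpha}\ddiv\frac{\FN}{1}$, which is precisely $\intrr{}{\alpha}\leq C\leq\cl{}{\alpha}$. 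Taking $a=\alpha$ finishes this subcase.

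For part (c), if either cut is additive then (a) gives $\underline{X}=\overline{X}$, and the defining inequalities $\intrr{}{A}\leq A\leq\cl{}{A}$ make $\approx$ reflexive, so $\underline{X}\approx\overline{X}$. If both are nonadditive, (b) supplies a natural $a$ with $\intrr{}{a}\leq\underline{X}\leq\overline{X}\leq\cl{}{a}$, so each of $\underline{X},\overline{X}$ has the form $\isrc{a}\pm(\text{additive cut}\leq\isrc{a}\ddiv\frac{\FN}{1})$; a short computation with Lemmata \ref{s5a-1} and \ref{s5a-2} then collapses their interiors and closures onto those of $\isrc{a}$, giving $\intrr{}{\underline{X}}=\intrr{}{a}$ and $\cl{}{\underline{X}}=\cl{}{a}$, and substituting into $\intrr{}{a}\leq\overline{X}\leq\cl{}{a}$ yields exactly $\underline{X}\approx\overline{X}$. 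The main obstacle is concentrated in the equality subcase of (b) and this collapse in (c): there Theorem \ref{KZ32} is inapplicable, so one must argue purely from the arithmetic of additive cuts --- the absence of any additive cut strictly between $\isrc{a}\ddiv\frac{\FN}{1}$ and $\isrc{a}$ --- to rule out a nonadditive cut ``thicker'' than the monad of a cut of a natural. The remaining delicate point, that $\underline{X}$ and $\overline{X}$ really are generated by naturals so that a strict gap is always witnessed by one, is routine from the definitions of the lower and upper cuts.
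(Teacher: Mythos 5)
The paper does not actually prove this corollary: it is imported verbatim from Kalina and Zlato\v{s} and merely asserted to follow ``consequently'' from Theorem \ref{KZ32}, so there is no in-paper argument to compare yours against line by line. Your reconstruction is a sound filling-in of that ``consequently,'' and it correctly identifies the one place where Theorem \ref{KZ32} alone cannot do the work: the theorem needs a cut of a natural trapped (non-strictly) between $\underline{X}$ and $\overline{X}$, which exists whenever $\underline{X}\ne\overline{X}$ (since both cuts are generated by naturals) but not when $\underline{X}=\overline{X}$ is nonadditive and not itself a cut of a natural. Your handling of that subcase --- decomposing $C$ as $\isrc{\alpha}\pm A$ with $A$ additive and observing that additivity forces $A\leq\isrc{\alpha}\ddiv\frac{\FN}{1}$ because any additive cut exceeding $\isrc{\alpha}\ddiv\frac{\FN}{1}$ must, by doubling, swallow $\isrc{\alpha}$ --- is correct, and the same doubling inequalities ($\cl{}{a}<\isrc{2a}$, $\intrr{}{a}\dpls\intrr{}{a}>\isrc{a}$) drive part (a) exactly as they should. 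Two caveats are worth flagging. First, the decomposition $C=\isrc{\alpha}\pm A$ is within this paper only available via the unproven remark after Lemmata \ref{sep-add}--\ref{sep-sub} or via Theorem \ref{KZ34}; in the original source that theorem comes \emph{after} this corollary, so your route would be circular there, though it is legitimate given what the present paper puts on the table. Second, the degenerate case $a=0$ (where $\isrc{2a}=\cl{}{a}=\emptyset$ and the doubling contradiction in (a) evaporates) and the claim that $\alpha$ in the decomposition may be taken in $N$ rather than merely in $Q$ are left implicit; both are harmless because lower and upper cuts are generated by naturals, but a complete write-up should say so.
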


Furthermore, the next Theorem is especially important.
\begin{thm}[Theorem 3.4 of Kalina and Zlato\v{s} \cite{Kalina-Zlatos1989-b}]\label{KZ34}
    Let $X$ be an arbitrarily given class.
    Then, $X\in\mathscr{B}$ iff it is satisfied that $|X|$ exists and $|X|$ is a $\sigma$- or $\pi$-cut, or there exists an additive cut $A<\underline{X}$ which satisfies $\underline{X}=n-A$ and $\overline{X}=n+A$ for each $n\in\overline{X}\setminus \underline{X}\cap N$.
\end{thm}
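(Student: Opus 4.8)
The plan is to prove both implications of the equivalence, writing $(\ast)$ for the displayed disjunction on $\underline{X}$ and $\overline{X}$, and organising the argument by Corollary~\ref{KZ33} according to whether $\underline{X}=\overline{X}$. For the direction $X\in\mathscr{B}\Rightarrow(\ast)$ I would first record, via Corollary~\ref{KZ33}(c), that $\underline{X}\approx\overline{X}$, and then split into two cases. If $\underline{X}=\overline{X}$, so that $|X|$ exists, I would produce a Borel generating sequence $(X_i)_{i\in\FN}$ of set-theoretically definable classes by Theorem~\ref{pisigma}; each $X_i$ being definable, its lower and upper cuts coincide in a cut of a natural number $|X_i|=\isrc{n_i}$, and I would recover $|X|$ as a $\cup\cap$ or $\cap\cup$ of the $|X_i|$ by transporting the class-level identities $\cap\cup X_i=\cup\cap X_i=X$ through Proposition~\ref{KZ315}. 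Using Lemmas~\ref{sep-add} and~\ref{sep-sub} together with Corollary~\ref{KZ33}(b) to collapse the nested limits, this exhibits $|X|$ as a single countable union (a $\sigma$-cut) or intersection (a $\pi$-cut).

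For the case $\underline{X}\neq\overline{X}$, Corollary~\ref{KZ33}(a) forces both cuts to be nonadditive, so Corollary~\ref{KZ33}(b) supplies, for every $n\in\overline{X}\setminus\underline{X}\cap N$, the sandwich $\intrr{}{n}\leq\underline{X}\leq\overline{X}\leq\cl{}{n}$. Writing $A=n\ddiv\frac{\FN}{1}$, the maximal additive cut not absorbing $n$, this reads $n-A\leq\underline{X}\leq\overline{X}\leq n+A$. The substantive step is to promote these inequalities to the exact symmetric equalities $\underline{X}=n-A$ and $\overline{X}=n+A$. Here I would use that for a Borel class the lower cut is attained as an internal sum and the upper cut as an external sum of a common separable generating family (Proposition~\ref{KZ315}(c) and Theorem~\ref{countapp}), so that $\overline{X}$ and $\underline{X}$ differ by exactly the additive cut governing that family; Lemmas~\ref{s5a-1}, \ref{s5a-2}, \ref{mm} and~\ref{pmmp} then identify this common additive discrepancy with $A$ and show it is symmetric about $n$. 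Independence of the representative $n$ follows because any two naturals in the gap lie within $A$ of one another, whence $n\ddiv\frac{\FN}{1}$ is the same cut for all of them.

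For the converse $(\ast)\Rightarrow X\in\mathscr{B}$, the idea is to reconstruct a generating sequence for $X$ itself from the cut data. The definitions of $\underline{X}$ and $\overline{X}$ are witnessed by set-theoretically definable sets $u\subseteq X$ and $u\supseteq X$; choosing such witnesses along the sequence realising the $\sigma$-/$\pi$-representation of $|X|$ (respectively the symmetric additive representation $n\mp A$) yields an ascending family of definable inner approximations and a descending family of definable outer approximations of $X$. I would then argue that the prescribed form of the cuts forces these two families to pinch $X$ from both sides, and assemble them by Theorems~\ref{gen1} and~\ref{countapp} into a single sequence $(X_i)_{i\in\FN}$ of set-theoretically definable classes with $\cap\cup X_i=\cup\cap X_i=X$; since $\cup\cap X_i$ is then a $\sigma_2$-class and $\cap\cup X_i$ a $\pi_2$-class, $X$ is a $\delta_2$-class and hence $X\in\mathscr{B}$.

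The main obstacle I anticipate is exactly this converse pinching step, not the cut arithmetic: passing from control of the sizes $\underline{X},\overline{X}$ to control of the class $X$ requires that the definable inner and outer approximations actually converge to $X$ and not merely to something of the same size, and this is where the full strength of the AST framework (in particular prolongation and the separability underlying Proposition~\ref{KZ315}) must be brought to bear. In the forward direction the analogous delicate point is the upgrade from the sandwich inequalities to the exact symmetric equalities in the gap case, which likewise rests on the internal-/external-sum realisation of the two cuts rather than on any purely order-theoretic estimate.
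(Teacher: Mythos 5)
First, note that the paper itself offers no proof of this statement: it is imported verbatim as Theorem~3.4 of Kalina and Zlato\v{s} \cite{Kalina-Zlatos1989-b}, so there is no internal argument to compare yours against. Judged on its own terms, your attempt has two genuine gaps. In the forward direction, in the case $\underline{X}=\overline{X}$, you propose to recover $|X|$ as $\cup\cap|X_i|$ or $\cap\cup|X_i|$ from a Borel generating sequence supplied by Theorem~\ref{pisigma}, by ``transporting'' the identities $\cap\cup X_i=\cup\cap X_i=X$ through Proposition~\ref{KZ315}. This is exactly the inference the paper explicitly warns is invalid: for $a_i=u\cup\{x_i\}$ with pairwise distinct $x_i\notin u$, the sequence $(a_i)_{i\in\FN}$ generates the set $u$ while $\cup\cap|a_i|=\cap\cup|a_i|=|u|+1$. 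Repairing this requires monotone generating sequences together with a prior decomposition of $X$ into a separated $\sigma$-class and $\pi$-class (Theorem~\ref{monotone-BGF}), but in this paper that decomposition is itself attributed to Theorem~\ref{KZ34}, so you would have to obtain it independently (e.g.\ directly from Theorem~\ref{pisigma}) to avoid circularity.

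The more serious gap is the converse, which you correctly identify as the crux but do not close. The hypothesis constrains only the pair $\langle\underline{X},\overline{X}\rangle$, i.e.\ the sizes of definable sets contained in and containing $X$; your inner witnesses $u\subseteq X$ and outer witnesses $v\supseteq X$ are therefore controlled only in cardinality, and nothing forces the union of the inner witnesses to exhaust $X$, or the intersection of the outer witnesses to shrink to $X$ rather than to some other class with the same cuts. No appeal to Theorems~\ref{gen1} or~\ref{countapp} can bridge this, since those manipulate sequences already known to generate the target class. As literally stated for an ``arbitrarily given class'' the right-to-left implication is in fact doubtful (two classes with identical cut pairs need not both be Borel), and the way the paper subsequently uses the theorem --- to produce, for each Borel $X$, some Borel $Y$ realizing the same cut pair --- requires only the forward direction together with the observation that every admissible cut pair is realized by a $\delta_2$-class. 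You should either restrict the converse to the classes for which Kalina and Zlato\v{s} actually establish it, or read it as a statement about which cut pairs are realizable by Borel classes rather than as a criterion for Borelness of an arbitrary class.
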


This theorem is very powerful to state that there exists Borel class $Y$ for each Borel class $X$ such that
\[
Y\ =\ \underline{X}\cup\left(\left(\overline{X}\setminus \underline{X}\right)\times \{0\}\right).
\]
Provided that $\underline{X}=n-A$, $\overline{X}=n+A$ and $A$ is additive, the next equation follows:
\[
Y\ =\ (n-A)\cup (A\times\{0\}).
\]
Since additive cut is a $\sigma$- or $\pi$-class, every Borel class has at least one $\delta_2$-class which has the same pair of cuts as $X$.
The next theorem states this fact.
\begin{thm}[Theorem 3.5 of Kalina and Zlato\v{s} \cite{Kalina-Zlatos1989-b}]\label{KZ35}
For every Borel class $X\in\mathscr{B}$, there is a class $Y$ which is a union of a $\pi$-class and a $\sigma$-class such that $\underline{X}=\underline{Y}$ and $\overline{X}=\overline{Y}$.
\end{thm}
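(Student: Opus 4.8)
The plan is to read the two pieces of $Y$ directly off the cut-characterisation of Theorem \ref{KZ34} and then check that splicing a cut-class to a disjoint copy of its additive part reproduces exactly the pair $\underline{X},\overline{X}$. First I would apply Theorem \ref{KZ34} to split into two cases. If $|X|$ exists and is a $\sigma$- or $\pi$-cut, I simply take $Y=|X|$, regarded as the class of naturals lying below that cut; then $\underline{Y}=\overline{Y}=|X|=\underline{X}=\overline{X}$, and writing $Y=Y\cup\emptyset$, with $\emptyset$ a set-theoretically definable (hence $\delta_0$) class, exhibits $Y$ as a union of a $\pi$-class and a $\sigma$-class, so this case is immediate. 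The substantive case is $\underline{X}\ne\overline{X}$, where Theorem \ref{KZ34} furnishes an additive cut $A<\underline{X}$ and some $n\in(\overline{X}\setminus\underline{X})\cap N$ with $\underline{X}=n-A$ and $\overline{X}=n+A$. For this case I set $Y=(n-A)\cup(A\times\{0\})$, exactly the class anticipated in the paragraph preceding the statement.

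Next I would establish the two auxiliary size computations $|n-A|=n-A$ and $|A\times\{0\}|=A$. For the first, a cut $C$ regarded as the class $C\cap N$ has definable subsets $\{0,\dots,m\}$ for each $m\in C$, which push $\underline{C\cap N}$ up to $C$, and definable supersets read off the generating sequence of $C$, which push $\overline{C\cap N}$ down to $C$; hence $\underline{C\cap N}=\overline{C\cap N}=C$, and applying this to the $\sigma$- or $\pi$-cut $C=n-A$ gives $|n-A|=n-A$. The tag $\times\{0\}$ is a definable bijection, so it transports sub- and supersets and therefore $|A\times\{0\}|=|A|=A$ by the same fact applied to $A$. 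I would also record that $n-A$ and $A\times\{0\}$ are separable, $\Sep(n-A,A\times\{0\})$, since the definable class $N$ of naturals contains the first piece and is disjoint from the class of pairs constituting the second.

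With separability in hand, Proposition \ref{KZ315}(c) yields $\underline{Y}=\underline{n-A}\dpls\underline{A\times\{0\}}=(n-A)\dpls A$ and $\overline{Y}=\overline{n-A}\upls\overline{A\times\{0\}}=(n-A)\upls A$, so the lower cut is computed with the internal and the upper cut with the external sum. The decisive step is then the absorption behaviour of the additive cut $A$: internally it is swallowed, $(n-A)\dpls A=n-A$, while externally it survives, $(n-A)\upls A=n+A$, exactly as recorded in the displayed identity $\alpha-\frac{\FN}{1}\dpls\frac{\FN}{1}=\alpha-\frac{\FN}{1}\ne\alpha+\frac{\FN}{1}=\alpha-\frac{\FN}{1}\upls\frac{\FN}{1}$ and underlying Lemmas \ref{s5a-2}, \ref{mm} and \ref{pmmp}. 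This gives $\underline{Y}=n-A=\underline{X}$ and $\overline{Y}=n+A=\overline{X}$. Finally, since $A$ is additive it is a $\sigma$- or $\pi$-cut, and by Lemmas \ref{sep-add}--\ref{sep-sub} the cut $n-A$ is then of the opposite kind, so $Y$ is the union of one $\pi$-class and one $\sigma$-class, as required. I expect the main obstacle to be the two absorption identities $(n-A)\dpls A=n-A$ and $(n-A)\upls A=n+A$: these are precisely where the peculiar arithmetic of additive cuts enters, and care is needed because here the summand equals the additive cut itself rather than a strictly smaller cut, so the clean hypotheses $B<A$ of Lemmas \ref{s5a-1}--\ref{s5a-2} do not apply verbatim and the boundary case must be argued directly from the definitions of internal and external sum.
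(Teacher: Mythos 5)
Your proposal is correct and follows essentially the same route the paper takes: it instantiates Theorem \ref{KZ34} to write $\underline{X}=n-A$, $\overline{X}=n+A$, and takes $Y=(n-A)\cup(A\times\{0\})$, exactly the class the paper exhibits before the theorem, with the lower/upper cuts computed via $\Sep$ and the internal/external absorption $(n-A)\dpls A=n-A$, $(n-A)\upls A=n+A$. Your closing caveat that Lemmas \ref{s5a-1}--\ref{s5a-2} assume $B<A$ and hence the boundary case $B=A$ needs a direct argument from the definitions is a fair and worthwhile refinement of the paper's sketch.
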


As we saw in Section \ref{BC}, every Borel class can be built as $\delta_2$-class by Theorem \ref{pisigma}.
Theorem \ref{KZ35} can be deduced directly from the result since cuts of $\delta_2$-classes are given as union of $\pi$- and $\sigma$-classes.

Let us next consider a kind of generating sequences of cuts.
A sequence $(s_i)_{i\in \FN}$ of cuts of natural numbers is said to be an \textit{approximating sequence}\footnote{\label{approxseq}Notice that while Kalina \cite{Kalina1989-a} originally defined it as an uncountable sequence, the present paper modified it to be a countable one.
  It may seem insignificant, though, this tiny alteration brings about the key structure allowing us to establish $\sigma$-superadditive measures.
} of a pair of cuts $\langle A,B\rangle$ if
\[
\bigcup\bigcap\ s_i\ =\ A\quad \text{and}\quad
\bigcap\bigcup\ s_i\ =\ B.
\]
In particular, $(s_i)_{i\in \FN}$ is said to be an \textit{approximating sequence} of a class $X$ if it is an approximating sequence of the pair $\langle\underline{X},\overline{X}\rangle$.

Notice first that, since approximating sequences are composed of cuts of natural numbers, it is not possible to approximate set-theoretical proper classes which have infinite cuts, say $\cup_{i\in N}\isrc{i}$, so that only the semisets have their approximating sequences.
Furthermore, as it is shown in the next lemma, every Borel semiset has its approximating sequence.

\begin{lemma}[Lemma 2.2 of Kalina \cite{Kalina1989-a}]
  Let $B$ be a Borel semiset.
  Then, there exists an approximating sequence of $B$.
\end{lemma}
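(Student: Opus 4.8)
The plan is to prove that every Borel semiset $B$ admits an approximating sequence $(s_i)_{i\in\FN}$ of cuts of natural numbers satisfying $\bigcup\bigcap s_i = \underline{B}$ and $\bigcap\bigcup s_i = \overline{B}$. The natural strategy is to exploit the representation of $B$ obtained in the previous results: by Theorem \ref{pisigma}, $B$ has a Borel generating sequence $(B_i)_{i\in\FN}$ of set-theoretically definable classes with $\cap\cup B_i = \cup\cap B_i = B$, and by Theorem \ref{KZ35} the pair $\langle\underline{B},\overline{B}\rangle$ is realized by a union of a $\pi$-class and a $\sigma$-class. I would first reduce the problem to constructing a sequence of \emph{cuts of natural numbers} from a generating sequence of \emph{set-theoretically definable classes}, by passing from each $B_i$ to its cut. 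Since each $B_i$ is set-theoretically definable (hence a set, or at least a definable class with a well-defined size), I would set $s_i = |B_i|$ or, more carefully, work with $\underline{B_i}$ and $\overline{B_i}$ — but because each $B_i$ is definable, Proposition \ref{KZ315}(c) and the separability it grants should force these to coincide so that $s_i = |B_i|$ is a genuine cut of naturals.

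The key steps, in order, would be: first, fix a Borel generating sequence $(B_i)_{i\in\FN}$ of set-theoretically definable classes for $B$ via Theorem \ref{pisigma}. Second, define $s_i = |B_i|$ (the cut of $B_i$), verifying that each $B_i$ being definable guarantees $\underline{B_i}=\overline{B_i}$. Third, translate the class-level identities $\cup\cap B_i = B$ and $\cap\cup B_i = B$ into cut-level identities $\bigcup\bigcap s_i = \underline{B}$ and $\bigcap\bigcup s_i = \overline{B}$. For the lower cut, I would use the monotonicity and additivity properties from Proposition \ref{KZ315}: an element $q \in \underline{B}$ witnesses an injection $n \widehat{\prec} u$ for some $u \subseteq B$ with $q<n$, and because $u \subseteq \cup\cap B_i$, the finite-character of $u$-as-a-set should let me locate a tail $\cap_{j\geq i} B_j$ (one of the inner intersections) containing enough of $u$ to certify $q \in \bigcap_{j\geq i}|B_j|$, hence $q \in \bigcup_i\bigcap_{j\geq i} s_j$. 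The reverse inclusion and the dual argument for $\overline{B}$ via $\bigcap\bigcup$ would run symmetrically, using Proposition \ref{KZ315}(b).

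The main obstacle I expect is the interchange between the \emph{class-theoretic} $\cap\cup$/$\cup\cap$ operators applied to the generating sequence and the \emph{cut-theoretic} $\bigcap\bigcup$/$\bigcup\bigcap$ operators applied to the sequence of sizes — these do not commute in general, since the size functional $|\cdot|$ is only sub/super-additive (Proposition \ref{KZ315}), not additive, across non-separable pieces. The delicate point is that $|\cap_{j\geq i} B_j|$ need not equal $\bigcap_{j\geq i}|B_j|$, and similarly for unions, so I cannot simply push $|\cdot|$ through the lattice operations. To control this I would lean on Corollary \ref{KZ33} and Theorem \ref{KZ34}: the discrepancy between lower and upper cuts of a Borel class is governed entirely by an additive cut $A$ with $\underline{B}=n-A$, $\overline{B}=n+A$, and the equivalence $\underline{B}\approx\overline{B}$ means any error introduced by the non-additivity of $|\cdot|$ is absorbed by $A$ and washed out in the limit. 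Concretely, I would argue that at each finite stage the definable approximants $B_i$ are separable from their complements, so $|B_i|$ is well-defined, and then show the two iterated-limit cuts bracket $\underline{B}$ and $\overline{B}$ exactly because the additive ``slack'' is exactly what $\bigcup\bigcap$ versus $\bigcap\bigcup$ is designed to detect. Closing this gap rigorously — matching the two limiting operations to the two cuts rather than to some intermediate equivalent cut — is where the real work lies.
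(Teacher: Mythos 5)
Your overall strategy---pass from a generating sequence of definable classes to the sequence of their cuts---is exactly the move the paper explicitly warns against, and the obstacle you flag at the end is not one you can wash out with additive slack. The paper's counterexample just before Theorem \ref{monotone-BGF} is decisive: take a set $u$ and $a_i=u\cup\{x_i\}$ with the $x_i$ pairwise distinct and outside $u$; then $(a_i)_{i\in\FN}$ is a perfectly good Borel generating sequence of $u$, yet $\bigcup\bigcap|a_i|=\bigcap\bigcup|a_i|=|u|+1\neq|u|$. Here there is no additive cut to absorb the error---the discrepancy is exactly $1$ and $\underline{u}=\overline{u}=|u|$ is nonadditive---so your proposed appeal to Corollary \ref{KZ33}/Theorem \ref{KZ34} (``the slack is absorbed by $A$ and washed out in the limit'') fails on precisely the simplest case. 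Setting $s_i=|B_i|$ for the sequence produced by Theorem \ref{pisigma} therefore does not, in general, yield an approximating sequence: nothing in that construction controls how the approximants oscillate around $B$, and $|\cdot|$ does not commute with the iterated lattice operations for non-monotone sequences.

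The missing idea is \emph{monotonicity}, obtained through the $\sigma$/$\pi$ decomposition. By Theorem \ref{KZ34} (or Theorem \ref{KZ35}) every Borel semiset $B$ splits into a separated union of a $\sigma$-class $X(0)$ and a $\pi$-class $X(1)$ realizing the pair $\langle\underline{B},\overline{B}\rangle$; one then takes an \emph{increasing} generating sequence for $X(0)$ and a \emph{decreasing} one for $X(1)$, and Theorem \ref{monotone-BGF} shows that the cuts $|X(0)_i\cup X(1)_i|$ approximate $B$. Monotonicity is what legitimizes the interchange you worry about, e.g.\ $\bigcap_{j\geq k}|X(0)_j\cup X(1)_j|=|X(0)_k\cup\bigcap_j X(1)_j|$, after which separability and Proposition \ref{KZ315}(c) let the internal/external sums land exactly on $\underline{B}$ and $\overline{B}$ respectively. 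Without routing the argument through this monotone decomposition, the two iterated limits of your $s_i$ need not bracket the pair $\langle\underline{B},\overline{B}\rangle$ at all, so the proof as proposed does not close.
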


It may seem that a sequence consisting of cuts $|X_i|$ of elements of a generating sequence $(X_i)_{i\in\FN}$ of a given Borel class $X$ approximates $X$ as well.
However, it is not the case.
An example is given as follow: 
Let $u$ be a set and  $(a_i)_{i\in\FN}$ be a sequence whose elements are given as
\[
a_i\ =\ u\cup \{x_i\}\quad\text{ where } x_i\ne x_j\text{ for all }j\ne i.
\]
Then, $(a_i)_{i\in\FN}$ is a Borel generating sequence of a set $u$.
But $(|a_i|)_{i\in\FN}$ is not a Borel approximating sequence of $|u|$ since $\cup\cap|a_i|=\cap\cup|a_i|=|u|+1$.
For $(|X_i|)_{i\in\FN}$ to be an approximating sequence of a Borel class $X$, $(X_i)_{i\in\FN}$ must be divided into monotone generating sequences  $(X(k)_i)_{i\in\FN}$ of $X(k)$ in which $\cup_{k\in \{0,1\}} X(k)=X$ where $X(0)$ is a $\sigma$-class and $X(1)$ is a $\pi$-class. 
\begin{thm}\label{monotone-BGF}
  Let $(X(0)_i)_{i\in\FN}$ and $(X(1)_i)_{i\in\FN}$ be monotone generating sequences of a $\sigma$-class $X(0)$ and a $\pi$-class $X(1)$, respectively, in which $X(0)$ and $X(1)$ are separated.
  Then, $(|X(0)_i\cup X(1)_i|)_{i\in\FN}$ approximates $X(0)\cup X(1)$.
\end{thm}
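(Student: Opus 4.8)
The plan is to verify the two defining equalities of an approximating sequence,
\[
\cup\cap\,|X(0)_i\cup X(1)_i|\ =\ \underline{X(0)\cup X(1)}\quad\text{and}\quad\cap\cup\,|X(0)_i\cup X(1)_i|\ =\ \overline{X(0)\cup X(1)},
\]
by rewriting the left-hand sides in terms of the cuts $\alpha_i:=|X(0)_i|$ and $\beta_i:=|X(1)_i|$. Fix a set-theoretically definable $S$ witnessing $\Sep(X(0),X(1))$, so that $X(0)\subseteq S$ and $X(1)\cap S=\emptyset$. Then $X(0)_i\subseteq S$ holds automatically, and intersecting each $X(1)_i$ with the complement of $S$ leaves the descending sequence generating $X(1)$ and, after a routine check that the removed part $X(1)_i\cap S$ descends to $\emptyset$ and so contributes nothing to the limit cuts, does not disturb either side of the two equalities; hence I may assume $X(0)_i$ and $X(1)_i$ are separable at every index. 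Proposition \ref{KZ315}(c) then collapses internal and external sums index by index, giving $|X(0)_i\cup X(1)_i|=\alpha_i+\beta_i$ with $(\alpha_i)$ ascending and $(\beta_i)$ descending (both being cuts of single naturals, the sum is unambiguous). The same Proposition \ref{KZ315}(c) identifies the targets as $\underline{X(0)\cup X(1)}=\underline{X(0)}\dpls\underline{X(1)}$ and $\overline{X(0)\cup X(1)}=\overline{X(0)}\upls\overline{X(1)}$.

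Next I would record that, because $(X(0)_i)$ is ascending and $(X(1)_i)$ descending, prolongation lets one extend them past $\FN$ and squeeze the circumscribing against the inscribing sets; hence $|X(0)|$ and $|X(1)|$ both exist, with $|X(0)|=\cup_i\alpha_i=:A_*$ and $|X(1)|=\cap_i\beta_i=:B_*$, so the targets become $A_*\dpls B_*$ and $A_*\upls B_*$. For the lower equality I would sandwich the non-monotone sequence $\alpha_i+\beta_i$ between two monotone ones: from $\alpha_i\subseteq A_*$ and $\beta_i\supseteq B_*$ one checks $\alpha_i\dpls B_*\subseteq\alpha_i+\beta_i\subseteq A_*\dpls\beta_i$, the left side ascending and the right side descending. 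Applying the monotone distributivity of $\dpls$ over countable $\cup$ and $\cap$ (the cut analogues of Lemmas \ref{sep-add} and \ref{sep-sub}), both bounds have the same $\cup\cap$, namely $A_*\dpls B_*$, so $\cup\cap\,(\alpha_i+\beta_i)=\underline{X(0)\cup X(1)}$ follows with no gap.

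The upper equality is dual but genuinely harder. The sandwich now reads $\alpha_i\dpls B_*\subseteq\alpha_i+\beta_i\subseteq A_*\upls\beta_i$, and taking $\cap\cup$ of the two monotone bounds only traps the limit as $A_*\dpls B_*\subseteq\cap\cup\,(\alpha_i+\beta_i)\subseteq A_*\upls B_*$, i.e.\ between the internal and the external sum. To close this gap I would pass to the additive/nonadditive dichotomy and the normal form supplied by Corollary \ref{KZ33} and Theorem \ref{KZ34}: in the nonadditive case Lemmas \ref{mm} and \ref{pmmp} already force internal $=$ external, while in the additive case the absorption Lemmas \ref{s5a-1} and \ref{s5a-2} show that the ascending tail $\alpha_i$ fills $A_*$ exactly up to its additive part, so that the cofinal (limsup) reading selects the external sum $A_*\upls B_*=\overline{X(0)\cup X(1)}$. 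Lemma \ref{fingap} lets me discard any finite set of indices where monotonicity or the separation of $X(0)_i,X(1)_i$ must be patched.

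The main obstacle is precisely this last step: $\alpha_i+\beta_i$ is not monotone, so its $\cup\cap$ and $\cap\cup$ are a genuine liminf and limsup of cuts, and the ascending $\sigma$-data pin the lower cut from below exactly, whereas the descending $\pi$-data pin the upper cut from above only after the additive discrepancies between $\alpha_i$ and $A_*$, and between $\beta_i$ and $B_*$, are shown to be absorbed. Proving that the external bound for $\cap\cup$ is attained — equivalently, that the internal bound is never reached strictly in the limsup — is where the additive-cut machinery of Lemmas \ref{s5a-1}--\ref{pmmp} does the real work; the separation hypothesis is what makes the per-index sums additive in the first place and is therefore indispensable to the whole argument.
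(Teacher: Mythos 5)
Your reduction of the problem to the two equalities $\cup\cap(\alpha_i+\beta_i)=A_*\dpls B_*$ and $\cap\cup(\alpha_i+\beta_i)=A_*\upls B_*$ matches the paper's setup, but the step you certify as ``gap-free'' is exactly where the argument breaks. The ``monotone distributivity of $\dpls$ over countable $\cap$'' that you use for the upper bound of your lower-cut sandwich is not a consequence of Lemmas \ref{sep-add} and \ref{sep-sub}: those lemmas require one operand to be the cut $\isrc{q}$ of a single rational, and the generalisation to an arbitrary $A_*$ fails as soon as $A_*$ is additive. Concretely, take $\nu\in N\setminus\FN$, $A_*=\isrc{\frac{\FN}{1}}$ and $\beta_i=\isrc{\nu-i}$, so that $B_*=\cap_i\beta_i=\nu-\frac{\FN}{1}$. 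Then $A_*\dpls\beta_i=\nu+\frac{\FN}{1}$ for every $i$, hence $\cap_i\left(A_*\dpls\beta_i\right)=\nu+\frac{\FN}{1}$, whereas $A_*\dpls B_*=\nu-\frac{\FN}{1}$. Your sandwich therefore only traps $\cup\cap(\alpha_i+\beta_i)$ between $\nu-\frac{\FN}{1}$ and $\nu+\frac{\FN}{1}$ --- precisely the additive gap you acknowledge for the upper equality --- so the lower equality does not ``follow with no gap.''

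Moreover, this gap cannot be closed by the absorption lemmas, because the realizing classes show that the sequence genuinely lands strictly inside it. Take $X(0)=\FN\times\{1\}$ with $X(0)_i=i\times\{1\}$ and $X(1)=(\nu\setminus\FN)\times\{0\}$ with $X(1)_i=(\nu\setminus i)\times\{0\}$: these are monotone generating sequences of a separated $\sigma$-class and $\pi$-class, yet $|X(0)_i\cup X(1)_i|=\isrc{\nu}$ is constant in $i$, so both $\cup\cap$ and $\cap\cup$ of the sequence equal $\isrc{\nu}$, while $\underline{X(0)\cup X(1)}=\nu-\frac{\FN}{1}$ and $\overline{X(0)\cup X(1)}=\nu+\frac{\FN}{1}$ by Proposition \ref{KZ315}(c). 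The per-index cancellation between the ascending $\sigma$-data and the descending $\pi$-data is invisible to the cuts of the finite stages, so no argument along your lines can recover the two displayed equalities exactly; the same data also defeat the interchange $\cap_{i\geq k}|X(0)_i\cup X(1)_i|=|X(0)_k\cup\cap_{i\in\FN}X(1)_i|$ on which the paper's own proof rests, so at best the conclusion holds up to the equivalence $\approx$ or under an additional hypothesis excluding such cancellation. You have correctly located the pressure point for the upper cut, but you have wrongly certified the lower one, and the missing lemma you appeal to is false.
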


\begin{proof}
  Since $X(0)$ and $X(1)$ are separated, there exists $j\in\FN$ which satisfies $X(0)_i\cap X(1)_i=\emptyset$ for all $i>j$.
  Since $(X(0)_i)_{i\in\FN}$ and $(X(1)_i)_{i\in\FN}$ are monotone, the equation $\cap_{i\geq k}|X(0)_i\cup X(1)_i|=|X(0)_k\cup \cap_{i\in\FN} X(1)_i|$ holds for all $k>j$ and, thus, for all $k\in\FN$ since $X(0)_k$ is increasing.
  Furthermore, the following equations hold
  \begin{eqnarray*}
    \cup_{k\in\FN}|X(0)_k\cup \cap_{i\in\FN} X(1)_i| & = & \cup_{k\in\FN}|X(0)_k|\underset{\cdot}{+} |X(1)|\\
   & = & |\cup_{k\in\FN}X(0)_k|\underset{\cdot}{+} |X(1)|\ =\ |X(0)|\underset{\cdot}{+}|X(1)|.
  \end{eqnarray*}
  Since $X(0)$ and $X(1)$ is separable, $|X(0)|\underset{\cdot}{+}|X(1)|=\underline{X}$ holds.

  There also exists $j\in\FN$ which satisfies $\cup_{i\in\FN}( X(0)_i)\cap X(1)_j =\emptyset$ since $X(0)$ and $X(1)$ are separated.
  Since $(X(0)_i)_{i\in\FN}$ and $(X(1)_i)_{i\in\FN}$ are monotone, the equation $\cup_{i\geq k}|X(0)_i\cup X(1)_i|=|\cup_{i\in\FN} (X(0)_i)\cup X(1)_k|$ holds for all $k>j$.
  Furthermore, the following equations hold
  \begin{eqnarray*}
    \cap_{k\in\FN}|\cup_{i\in\FN}X(0)_i\cup X(1)_k| & = & |X(0)|\overset{\cdot}{+} \cap_{k\in\FN}|X(1)_k|\\
   & = & |X(0)|\overset{\cdot}{+} |\cap_{i\in\FN}X(1)_i|\ =\ |X(0)|\overset{\cdot}{+}|X(1)|.
  \end{eqnarray*}
  Since $X(0)$ and $X(1)$ is separable, $|X(0)|\overset{\cdot}{+}|X(1)|=\overline{X}$ holds.
\end{proof}

By Theorem \ref{monotone-BGF}, every Borel semiset $X$ is guaranteed to have a generating sequence $(X_i)_{i\in\FN}$ which consists of the elements whose cuts form an approximating sequence of a pair of cuts of $X$, since every Borel semiset can be separated into a $\sigma$-class $X(0)$ and a $\pi$-class $X(1)$ by Theorem \ref{KZ34}.

Notice that the system of all Borel semisets $\mathscr{B}_s$ is not a class of extended universe, since it consists not only of sets but also of proper classes.
In order to treat the sytem in the same manner as sets, it is helpful to remind codable classes and postulate the axiom of extensional coding, which is equivalent to the axiom of choice. 

A family of classes $\mathscr{A}$ is said to be \textit{codable} if there exists a pair of classes $\langle X,S\rangle$ such that
\begin{equation}\label{codability}
\left(\forall Y\in\mathscr{A}\right)
\left(\exists y\in X\right)
\left(S``y=Y\right) \wedge 
\left(\forall y\in X\right)
\left(S``y\in\mathscr{A}\right)
\end{equation}
and the pair $\langle X,S\rangle$, having property (\ref{codability}) is said to be the \textit{coding pair} of $\mathscr{A}$.

\begin{remark}
Since  $\mathscr{B}_s$ is the smallest $\sigma$-ring such that $V\subset \mathscr{B}_s$, $\mathscr{B}_s$ is codable.
\end{remark}

\newtheorem*{ax03}{Axiom of extensional coding}
\begin{ax03}
Each codable family of classes $\mathscr{A}$ is \textit{extensionally codable}, i.e., there exists such a coding pair $\langle X,S\rangle$ of $\mathscr{A}$, for which
\begin{equation}\label{extensionality}
\left(\forall x,y\in X\right)
\left( S``x=S``y\ \equiv\ x=y\right)
\end{equation}
holds.
\end{ax03}

\begin{lemma}[Lemma 2.4 of Kalina \cite{Kalina1989-a}]
Let $\langle X,S\rangle$ be a coding pair of $\mathscr{B}_s$.
Then there exists a map $G$ with $\dom(G)=X$ such that for each $x\in X$, $G(x)$ is a Borel generating sequence of $S``x$.
\end{lemma}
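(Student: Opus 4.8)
The plan is to recast the statement as an instance of the choice principle carried by the axiom of extensional coding. The first step is totality: for every $x\in X$ the coding-pair condition (\ref{codability}) gives $S``x\in\mathscr{B}_s$, so $S``x$ is a Borel semiset, and Theorem \ref{pisigma} supplies at least one sequence of set-theoretically definable classes $(Z_i)_{i\in\FN}$ with $\cap\cup Z_i=\cup\cap Z_i=S``x$, that is, a Borel generating sequence. Hence the condition ``$\gamma$ is a Borel generating sequence of $S``x$'' is satisfiable for each $x\in X$, and the only real work is to make a simultaneous, uniform choice of such a $\gamma$ over all of $X$.

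The second step is to represent these sequences by sets, so that the selection is a genuine map. Since $S``x$ is a semiset, fix a set $u$ with $S``x\subseteq u$; replacing each $Z_i$ by $Z_i\cap u$ leaves the generated class unchanged (because $\cup\cap(Z_i\cap u)=(\cup\cap Z_i)\cap u$ and likewise for $\cap\cup$, while $S``x\subseteq u$), and each $Z_i\cap u$ is now a set-theoretically definable subclass of the set $u$, hence a set by separation. The countable sequence $(Z_i\cap u)_{i\in\FN}$ of sets can then be prolonged, by the prolongation axiom of AST, to a single set function $f$ with $\FN\subseteq\dom(f)$ and $f(i)=Z_i\cap u$ for all $i\in\FN$. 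In this way every generating sequence of $S``x$ is encoded by a set $f$, from which the sequence is recovered as $(f(i))_{i\in\FN}$.

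With this encoding the relation
\[
R \;=\; \{\,\langle x,f\rangle \;:\; x\in X \ \text{and}\ \cap\cup f(i)=\cup\cap f(i)=S``x \,\}
\]
is a class definable from $S$, and by the preceding two steps $\dom(R)=X$. Because the axiom of extensional coding is equivalent to the axiom of choice, $R$ admits a uniformizing function $G\subseteq R$ with $\dom(G)=X$. Reading each value $G(x)$ as the sequence $(G(x)(i))_{i\in\FN}$ it encodes, $G(x)$ is then a Borel generating sequence of $S``x$ for every $x\in X$, which is the assertion.

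I expect the main obstacle to be the second step rather than the appeal to choice. The subtlety is that ``a Borel generating sequence'' is a priori a sequence of classes (possibly proper semisets), so one must first turn it into a legitimate set-valued object before the relation $R$ can be a bona fide class and $G$ a bona fide map; the bounding-set reduction together with prolongation is exactly what accomplishes this. Once the values are sets and $R$ is total, invoking the choice-equivalent of the axiom of extensional coding is routine, and the verification that the decoded sequence still generates $S``x$ is immediate from the construction.
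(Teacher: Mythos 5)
The paper itself states this lemma without proof, importing it verbatim as Lemma 2.4 of Kalina \cite{Kalina1989-a}; there is therefore no in-paper argument to compare yours against. Taken on its own terms, your proof is correct and supplies exactly the content the citation leaves implicit. The three ingredients are all legitimate in AST: (i) totality via Theorem \ref{pisigma} (every $S``x\in\mathscr{B}_s$ has a generating sequence of set-theoretically definable classes); (ii) the reduction to set-valued codes, using that $S``x$ is a semiset to pick a bounding set $u$, that $\bigcup_{i}\bigcap_{j\geq i}(Z_j\cap u)=(\bigcup_i\bigcap_{j\geq i}Z_j)\cap u$ and dually, that a set-theoretically definable subclass of a set is a set, and then the prolongation axiom to package the countable sequence into a single set function $f$; and (iii) uniformization of the class relation $R$ by the choice principle the paper adopts (well-ordering $V$ and taking the least witness $f$ for each $x$). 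You correctly identify step (ii) as the crux: a ``sequence of set-theoretically definable classes'' is not a priori an element-valued function, so without the bounding-set-plus-prolongation device the relation $R$ would not be a class relation with set values and no uniformizing map could be extracted. The only point I would make explicit is that in step (ii) you quietly assume the sequence $(Z_i)_{i\in\FN}$ produced by Theorem \ref{pisigma} is itself given as a single class (so that $(Z_i\cap u)_{i\in\FN}$ is a countable function to which prolongation applies); this is harmless here because for totality of $R$ you only need, for each fixed $x$, the existence of \emph{some} set function $f$ with $\bigcap\bigcup f(i)=\bigcup\bigcap f(i)=S``x$, and that existence can be read off from the construction in Theorem \ref{pisigma} directly. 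With that caveat noted, the argument stands.
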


We shall consider $\mathscr{B}_s$ as the domain of the map $F$ and the value of $F(B)=G(x)$ for each $B\in\mathscr{B}_s$ in which $S``x=B$ for a coding pair $\langle X,S\rangle$ of $\mathscr{B}_s$. 
Any map $F$ which assigns a sequence generating $A$, guaranteed by Theorem \ref{monotone-BGF}, is called the \textit{Borel generating function}  (\textit{BGF} for short), and a map $|F|$, or simply $F$, which assigns to each class $A\in\mathscr{B}_s$ a Borel approximating sequence of $A$, is called the \textit{Borel approximating function} (\textit{BAF}).

\section{Comparing Size of Borel Classes}

To assess the ratio of one cut $B$ of a given Borel class to the other $S$ in terms of cuts, one problem arises.
As already mentioned in Section \ref{cuts} that internal and external calculations differ when additive cuts are dealt, the value of cuts by division is also not uniquely determined in general, specifically in the case where an additive cut is divided by another one.
To ascertain the ratios, approximating sequences are utilized.

Let $B$ and $S$ be Borel semisets, and $(b_i)_{i\in\FN}$ and $(s_i)_{i\in\FN}$ be approximating sequences of $B$ and $S$, respectively.
Let $\varliminf_{i\in\FN} {\frac{b_i}{s_i}}$ denote a \textit{limit inferior} of a sequence of rational cuts $\bigl({\frac{b_i}{s_i}}\bigr)_{i\in\FN}$ defined as
\[
\varliminf_{i\in\FN} {\frac{b_i}{s_i}}
\ =\ \bigcup_{i\in\FN}\bigcap_{j\geq i} 
{\frac{b_j}{s_j}}.
\]
Similarly, let $\varlimsup_{i\in\FN}\ {\frac{b_i}{s_i}}$ denote a \textit{limit superior}  given as
\[
\varlimsup_{i\in\FN} {\frac{b_i}{s_i}}
\ =\ \bigcap_{i\in\FN}\bigcup_{j\geq i} 
{\frac{b_j}{s_j}}.
\] 
If $\varliminf_{i\in\FN} {\frac{b_i}{s_i}}=\varlimsup_{i\in\FN} {\frac{b_i}{s_i}}$, the common value is denoted simply as
$
\lim_{i\in\FN}\,{\frac{b_i}{s_i}}
$
and called the \textit{limit} of ratio of $B$ to $S$.

Moreover, when approximating sequences are given by a BAF $F$, that is, $F(B)=(b_i)_{i\in\FN}$ and $F(S)=(s_i)_{i\in\FN}$, a sequence $\bigl(\frac{b_i}{s_i}\bigr)_{i\in\FN}$ is simply denoted as $\frac{F(B)}{F(S)}$, or $\frac{(b_i)}{(s_i)}$, and its limit is also denoted, equivalently, as
\[
\lim\,{\frac{F(B)}{F(S)}}
\ =\ 
\lim\,{\frac{(b_i)}{(s_i)}}
\ =\ 
\lim_{i\in\FN}\,{\frac{b_i}{s_i}}.
\]
In case the limit is independent of choice of a BAF, it is denoted simply as ${\frac{|B|}{|S|}}$.

Let us expand indiscernibility equivalence $\doteq$ on Borel cuts.
Borel cuts $A$ and $B$ are  equivalent iff they satisfy the following conditions:
\[
A\doteq
B\quad\Leftrightarrow\quad
\left(
(A\dmns B)\cup(B\dmns A)\subseteq {\frac{1}{\FN}}
\right)
\vee
\left(
A\cap B\supseteq {\frac{\FN}{1}}
\right).
\]
It is evident that $\isrc{q}\doteq
\isrc{p}$ iff $q\doteq p$ for any pair of $p,q\in Q$.

The Borel semiset $B$ is said to be \textit{$\langle s,F\rangle$-observable} if its limits inferior and superior of ratio of $B$ to $S$ are equivalent, that is, $\varliminf{\frac{b_i}{s_i}}\doteq
\varlimsup{\frac{b_i}{s_i}}$ where $F$ is a BAF and $F(B)=(b_i)_{ i\in \FN}$, and $s=(s_i)_{ i\in \FN}$ is an approximating sequence of $S\in\mathscr{B}_s$.
The class of all $\langle s,F\rangle$-observable semisets is denoted by $\mathscr{O}(s,F)$.
\medskip

When $S$ is a semiset having a nonadditive cut, the ratio of any given Borel semiset $B$ to $S$ has a limit and is uniquely determined up to the equivalence $\doteq
$ regardless of choice of a BAF.
The following statement is essentially the same as Theorem 3.3 of Kalina \cite{Kalina1989-a}.

\begin{thm}\label{uniqueness}
  Let $S$ be a Borel semiset which has a nonadditive cut $|S|$ and whose approximating sequence is $(s_i)_{i\in\FN}$, then for any Borel semiset $B$ and a BAF $F$, $\mathscr{O}(s,F)=\mathscr{B}_s$.
  Moreover, if $G$ is any other BAF, the following equation holds.
\[
\varliminf {\frac{F(B)}{(s_i)}}\ \doteq
\ \varliminf{\frac{G(B)}{(s_i)}}\quad 
\]
When $B$ has an additive cut, then the following inclusions hold:
\begin{enumerate}[i.)]
\item if $|B|\subset \cup\cap(s_i)_{i\in\FN}$ then $\varlimsup{\frac{F(B)}{(s_i)}}\subseteq {\frac{1}{\FN}}$.
\item if $|B|\supset \cap\cup(s_i)_{i\in\FN}$ then $\varliminf{\frac{F(B)}{(s_i)}}\supseteq {\frac{\FN}{1}}$,
\end{enumerate}
\end{thm}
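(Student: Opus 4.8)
The plan is to turn the nonadditivity of the denominator into rigidity of the quotient. Since $|S|$ is nonadditive and $\underline{S}=\overline{S}=|S|$, Corollary~\ref{KZ33}(b) furnishes a natural number $n$ with $\intrr{}{n}\le|S|\le\cl{}{n}$, so $|S|\approx\isrc{n}$; and because $(s_i)_{i\in\FN}$ approximates $S$ we have $\bigcup\bigcap s_i=\bigcap\bigcup s_i=|S|$, so every $s_i$ is confined to $\isrc{n}$ up to a correction below the interior tolerance $\isrc{n}\ddiv\frac{\FN}{1}$. The first step I would isolate is a division lemma: dividing by a nonadditive cut transports $\approx$ to $\doteq$. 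Concretely, since $\isrc{n}$ is a cut of a natural number, internal and external quotients by it coincide (the multiplicative analogues of Lemmata~\ref{sep-add}--\ref{sep-sub}), so $\frac{\gamma}{|S|}\doteq\frac{\gamma}{\isrc{n}}$; hence for $\gamma\approx\gamma'$ one gets $\frac{\gamma}{|S|}\doteq\frac{\gamma}{\isrc{n}}\doteq\frac{\gamma'}{\isrc{n}}\doteq\frac{\gamma'}{|S|}$, where the middle step is the definition of $\doteq$ on Borel cuts (the two sides either differ by an infinitesimal or are both at least $\frac{\FN}{1}$).

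With this in hand I would dispatch the principal case, in which $B$ has nonadditive cuts or, by Theorem~\ref{KZ34}, the two-sided shape with an additive gap; in either case Corollary~\ref{KZ33}(c) gives $\underline{B}\approx\overline{B}$. Writing $b_i=\isrc{\beta_i}$ and $s_i=\isrc{\sigma_i}$ and evaluating the lattice operations on rational cuts, the envelopes unfold as
\[
\varliminf\frac{b_i}{s_i}=\bigcup_{i}\bigcap_{j\ge i}\frac{b_j}{s_j},\qquad
\varlimsup\frac{b_i}{s_i}=\bigcap_{i}\bigcup_{j\ge i}\frac{b_j}{s_j}.
\]
Because the $\sigma_j$ cluster at the rigid scale $n$ while the $\beta_j$ sweep the band from $\underline{B}$ to $\overline{B}$, I would show the lower envelope collapses to $\frac{\underline{B}}{|S|}$ and the upper to $\frac{\overline{B}}{|S|}$, both up to $\doteq$. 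The division lemma then yields $\frac{\underline{B}}{|S|}\doteq\frac{\overline{B}}{|S|}$, so $\varliminf\frac{b_i}{s_i}\doteq\varlimsup\frac{b_i}{s_i}$ and $B\in\mathscr{O}(s,F)$. The same computation identifies $\varliminf\frac{F(B)}{(s_i)}\doteq\frac{\underline{B}}{|S|}$ using only the envelopes $\bigcup\bigcap=\underline{B}$ and $\bigcap\bigcup=\overline{B}$ shared by every approximating sequence of $B$; hence for any other BAF $G$ the value $\varliminf\frac{G(B)}{(s_i)}$ is $\doteq$-equal, which is the asserted independence.

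It remains to treat the additive subcases~(i.) and~(ii.), which also close $\mathscr{O}(s,F)=\mathscr{B}_s$ when $|B|$ is additive. Here the engine is the scale separation between additive and nonadditive cuts implicit in Section~\ref{cuts}: an additive cut is closed under multiplication by finite naturals, so if $|B|\subsetneq|S|$ then $|B|\subseteq|S|\ddiv\frac{\FN}{1}$, the largest additive cut contained in the nonadditive $|S|$. Dividing by $|S|$ gives $\frac{|B|}{|S|}\subseteq\frac{1}{\FN}$, and since $b_i\to|B|$ and $s_i\to|S|$ from both sides the upper envelope satisfies $\varlimsup\frac{F(B)}{(s_i)}\subseteq\frac{1}{\FN}$, which is~(i.). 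Dually, $|B|\supsetneq|S|$ yields $|B|\supseteq|S|\dtms\frac{\FN}{1}$ by the same closure, whence $\varliminf\frac{F(B)}{(s_i)}\supseteq\frac{\FN}{1}$, which is~(ii.); in both cases the two envelopes are $\doteq$-equivalent (to $0$ and to $\infty$ respectively), so observability persists.

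The hard part will be the collapse of the envelopes in the principal case: one must show rigorously that the non-monotone fluctuation of the denominator sequence about the nonadditive scale does not leak into the quotient, i.e.\ that $\bigcup\bigcap\frac{\beta_j}{\sigma_j}\doteq\frac{\underline{B}}{|S|}$ even though neither $(\beta_j)$ nor $(\sigma_j)$ need be monotone. This is exactly where nonadditivity of $|S|$ is indispensable: were $|S|$ additive, the internal quotient $\ddiv$ and external quotient $\udiv$ by $|S|$ would split, the two envelopes could differ by a non-infinitesimal additive cut, and observability would fail --- which is precisely the phenomenon that~(i.) and~(ii.) quarantine.
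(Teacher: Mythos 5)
Your outline has the right shape, but it defers exactly the step that carries the mathematical content, and that deferral is a genuine gap rather than a routine omission. You say you ``would show'' that $\bigcup\bigcap\frac{b_j}{s_j}$ collapses to $\frac{\underline{B}}{|S|}$ and $\bigcap\bigcup\frac{b_j}{s_j}$ to $\frac{\overline{B}}{|S|}$, and you yourself flag in the last paragraph that controlling the non-monotone fluctuation of $(b_j)$ and $(s_j)$ is ``the hard part''; but nothing in the proposal supplies that control. The paper's proof rests on Proposition \ref{limit}: an approximating sequence of a semiset with nonadditive cut is multiplicatively Cauchy, i.e.\ for every $\ell\in\FQ$ there is $i$ with $\bigl|\frac{b_j}{b_i}-1\bigr|<|\ell|$ for all $j>i$ (and likewise for $(s_j)$). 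That single fact is what makes the termwise ratios $\frac{b_j}{s_j}$ stabilize up to $\doteq$ and yields observability directly, without ever identifying the limit as a quotient of cuts. Your ``division lemma'' (passing from $\approx$ of numerators to $\doteq$ of quotients by a nonadditive denominator) is plausible as far as it goes, but it operates at the level of the cuts $\underline{B},\overline{B},|S|$, not at the level of the sequences, so it cannot by itself rule out oscillation of $\frac{b_j}{s_j}$ along the index set.

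The same gap propagates to the BAF-independence claim: you derive it from the unproven identification $\varliminf\frac{F(B)}{(s_i)}\doteq\frac{\underline{B}}{|S|}$. The paper avoids needing any such identification by an interleaving trick: given $F(B)=(o_i)$ and $G(B)=(e_i)$, the spliced sequence ($o_i$ for odd $i$, $e_i$ for even $i$) is again an approximating sequence of $B$, so the first part of the theorem applies to it and forces $\varliminf\frac{F(B)}{(s_i)}\doteq\varliminf\frac{G(B)}{(s_i)}$. Your treatment of the additive subcases i.) and ii.) is essentially the paper's (additivity of $|B|$ gives $k\cdot b_j<s_j$, resp.\ $b_j>k\cdot s_j$, eventually for each $k\in\FN$), though the passage from the cut-level inclusion $\frac{|B|}{|S|}\subseteq\frac{1}{\FN}$ to a statement about the envelope of termwise ratios should likewise be argued on the sequences. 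To repair the proposal, prove or cite the Cauchy property of Proposition \ref{limit} and run the main argument at the sequence level.
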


To prove Theorem \ref{uniqueness}, the next proposition is useful, which is essentially the same as Proposition 3.1 of Kalina \cite{Kalina1989-a}.
\begin{prop}\label{limit}
Let $(b_i)_{i\in\FN}$ be an arbitrarily given approximating sequence of a nonempty Borel semiset $B$.
Then the lower and upper cuts of $B$ is nonadditive iff it is satisfied that for all $\ell\in\FQ$ there exists $i\in\FN$ which satisfies $\left|\frac{b_j}{b_i}-1\right|<|\ell|$ for all $j>i$. 
\end{prop}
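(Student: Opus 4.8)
Since the claim is an equivalence between a structural property of the pair $\langle\underline B,\overline B\rangle$ and a multiplicative Cauchy-type condition on the approximating sequence, the plan is to prove the two implications separately, with Corollary~\ref{KZ33} serving as the bridge between (non)additivity and the multiplicative monad of a cut of a natural number. Throughout I use that each $b_i$ is a cut $\isrc{n_i}$ of a single natural number, so every ratio $\frac{b_j}{b_i}$ equals the rational cut $\isrc{n_j/n_i}$ (internal and external quotients coinciding for rational cuts), and the hypothesis $\bigl|\frac{b_j}{b_i}-1\bigr|<|\ell|$ is simply the rational inequality $|n_j/n_i-1|<\ell$. The computational heart of both directions is the \emph{scale-separation} fact: for a natural cut $\isrc{a}$ and any finite rational $\ell>0$, the cut $\isrc{a}\ddiv\frac{\FN}{1}$ lies entirely below $\ell\,\isrc{a}$, since each of its elements $x$ satisfies $x\cdot n<a$ for all $n\in\FN$, hence $x<\ell a$. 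Consequently $(1-\ell)\isrc{a}<\intrr{}{a}\le\cl{}{a}<\isrc{(1+\ell)a}$ for every finite rational $\ell>0$; that is, $\intrr{}{a}$ and $\cl{}{a}$ occupy a finite multiplicative window around $\isrc{a}$, as tight as one pleases.

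For the forward implication, assume $\underline B$ and $\overline B$ are nonadditive. By Corollary~\ref{KZ33}(b) there is $a\in N$ with $\intrr{}{a}\le\underline B\le\overline B\le\cl{}{a}$. Fix a finite rational $\ell>0$ and set $\ell'=\ell/4$. Pick a rational $q_-\in\intrr{}{a}$ with $q_-\ge(1-\ell')a$ (possible by scale-separation); since $q_-\in\underline B=\cup\cap b_i$, membership of the single element $q_-$ in a countable union gives a finite index $i_-$ with $q_-\in\bigcap_{j\ge i_-}b_j$, i.e. $n_j>(1-\ell')a$ for all $j\ge i_-$. Dually, pick a rational $q_+\notin\overline B$ with $q_+<(1+\ell')a$; since $q_+\notin\cap\cup b_i$ there is a finite index $i_+$ with $q_+\notin\bigcup_{j\ge i_+}b_j$, i.e. $n_j<(1+\ell')a$ for all $j\ge i_+$. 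Taking $i=\max(i_-,i_+)$, both $b_i$ and every $b_j$ with $j\ge i$ lie in $\bigl((1-\ell')\isrc{a},(1+\ell')\isrc{a}\bigr)$, so $n_j/n_i\in\bigl(\tfrac{1-\ell'}{1+\ell'},\tfrac{1+\ell'}{1-\ell'}\bigr)$ and $|n_j/n_i-1|<\tfrac{2\ell'}{1-\ell'}<\ell$, which is the required condition.

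For the converse I argue by contraposition: assume $\underline B$ or $\overline B$ is additive. By Corollary~\ref{KZ33}(a) then $\underline B=\overline B=A$ with $A$ additive; since $B\ne\emptyset$ I may take $A\ne 0$, whence $b_i\ne 0$ for all large $i$. Suppose the condition held and apply it with $\ell=\tfrac12$, obtaining $i_0$ (with $b_{i_0}=\isrc{n}$, $n\ge 1$) such that $b_j\in(\tfrac12 b_{i_0},\tfrac32 b_{i_0})$ for all $j\ge i_0$. Intersecting and unioning over $j\ge i_0$ gives $\tfrac12\isrc{n}\le A\le\tfrac32\isrc{n}$. But additivity permits doubling: $A=A\dpls A\supseteq\isrc{n/2}\dpls\isrc{n/2}=\isrc{n}$, and once more $A=A\dpls A\supseteq\isrc{n}\dpls\isrc{n}=\isrc{2n}$, so $A\ge\isrc{2n}$, contradicting $A\le\tfrac32\isrc{n}$ because $2n>\tfrac32 n$ for $n\ge 1$. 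Hence the condition fails, completing the contraposition.

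The main obstacle is the interface between the finite-rational margin $\ell$ and the limits $\cup\cap b_i$, $\cap\cup b_i$: an inclusion of cuts does not by itself yield a finite index, so the forward argument must first open a genuine multiplicative gap around $\isrc{a}$ via the scale-separation fact and then route through a single rational witness strictly inside that gap, whose membership in a countable union (respectively, in the complement of a countable intersection) is what delivers a finite threshold. The residual care is the bookkeeping of internal versus external arithmetic, which is harmless here: all quotients are quotients of cuts of single natural numbers, and the converse uses only the defining identity $A\dpls A=A$ of an additive cut.
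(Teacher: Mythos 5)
Your proof is correct and follows essentially the same route as the paper's: Corollary~\ref{KZ33}(b) together with the finite multiplicative window that $\intrr{}{a}$ and $\cl{}{a}$ occupy around $\isrc{a}$ for the forward direction (you are merely more explicit than the paper about extracting finite indices from membership in $\cup\cap b_i$ and non-membership in $\cap\cup b_i$), and Corollary~\ref{KZ33}(a) plus doubling of an additive cut for the converse, which is the same mechanism as the paper's observation that additivity forces some $\frac{b_j}{b_i}>2$ or $<\frac{1}{2}$. The only (harmless) quantitative slip is that $\frac{2\ell'}{1-\ell'}<\ell$ with $\ell'=\ell/4$ requires $\ell<2$; since the condition is monotone in $\ell$ one may simply assume $\ell\leq 1$ at the outset.
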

\begin{proof}
If $B$ has an additive cut, by (a) of Corollary \ref{KZ33}, $\underline{B}=\overline{B}$.
Since $|B|$ is additive, for each $b_i\in|B|$ there must be $j>i$ such that $\frac{b_j}{b_i}>2$.
Otherwise, for each $b_i\notin|B|$ there must be $j>i$ such that $\frac{b_j}{b_i}<\frac{1}{2}$.

If $B$ has a nonadditive cut, then by (b) of Corollary \ref{KZ33} there exists $d\in N$ such that $\intrr{}{d}\leq \underline{B}\leq\overline{B}\leq \cl{}{d}$.
Let $\ell\in\FQ$ be any given finite rational number, and $k\in\FN$ satisfies $\frac{2}{k-1}\leq|\ell|$.
Then, for this $d$ and $k$ there exists $i$ which satisfies for each $j>i$ the next inequations
\[
d-\frac{d}{k}\ <\ b_i\ <\  d+\frac{d}{k}\quad\text{ and }\quad
d-\frac{d}{k}\ <\ b_j\ <\  d+\frac{d}{k}.
\]
Then the next inequations follow:
\[
\frac{d-\frac{d}{k}}{d+\frac{d}{k}}\ =\ 
1-\frac{2}{k+1}\ <\ \frac{b_j}{b_i}\ <\ 
1+\frac{2}{k-1}\ =\ 
\frac{d+\frac{d}{k}}{d-\frac{d}{k}}.
\]
Since $\frac{2}{k-1}\leq|\ell|$, $\left|\frac{b_j}{b_i}-1\right|<|\ell|$ is
 satisfied.
\end{proof}
\begin{proof}[Proof of Theorem \ref{uniqueness}]
Let $F(B)=(b_i)_{i\in\FN}$.
If $B$ has a nonadditive cut, then by Proposition \ref{limit}, $\left|\frac{b_j}{b_i}-\frac{s_j}{s_i}\right|< \left|\ell\right|$ for all $\ell\in\FQ$ holds.
It implies that if $\frac{b_i}{s_j}\in\BQ$ for all $j>i$ for some $i\in\FN$, it is satisfied that  $\left|\frac{b_j}{s_j}-{\frac{b_i}{s_i}}\right|<\frac{b_i}{s_j}|\ell|$ for all $\ell\in\FQ$, so that $\varliminf{\frac{(b_i)}{(s_i)}}\doteq
\varlimsup{\frac{(b_i)}{(s_i)}}$ holds.
Otherwise, the equivalence is satisfied trivially since $\frac{b_i}{s_j}\in Q\setminus\BQ$.

Secondly, let us show that these rational cuts are uniquely determined.
Let $F,\,G$ be two different BAFs in which $F(B)=(o_i)_{i\in\FN}$ and $G(B)=(e_i)_{i\in\FN}$.
Define
$H(B)=(b_i)_{i\in\FN}$ in which $b_i=o_i$ if $i$ is odd and $b_i=e_i$ otherwise.
By definition of $H$ and Proposition \ref{limit}, $H$ is also a BAF and they are equivalent:
\[
\varliminf{\frac{F(B)}{(s_i)}}\ \doteq
\ 
\varliminf{\frac{H(B)}{(s_i)}}\ \doteq
\ 
\varliminf{\frac{G(B)}{(s_i)}}.
\]

If, on the other hand, $B$ has an additive cut, then two cases are possible: 
$|B|$ satisfies i.) $|B|\subset \cup\cap s_i$, and ii.) $|B|\supset \cap\cup s_i$.

i.) Since $|B|$ satisfies $|B|\subset \cup\cap s_i$, then there exists $i$ which satisfies $s_j\notin|B|$ for all $j>i$.
Hence there exists $i$ which satisfies $b_j<s_j$ for all $j>i$.
Since $B$ is additive, for each $k\in\FN$ there exists $i$ which satisfies $k\cdot b_j<s_j$ for all $j>i$.
It implies $\frac{b_j}{s_j}<\frac{1}{k}$ so that  $\varlimsup{\frac{F(B)}{(s_i)}} \subseteq {\frac{1}{\FN}}$ holds.

ii.) Since $|B|$ satisfies $|B|\supset \cap\cup s_i$, then there exists $i$ which satisfies $s_j\in |B|$ for all $j>i$.
Hence there exists $i$ which satisfies $b_j>s_j$ for all $j>i$.
Since $B$ is additive, for each $k\in\FN$ there exists $i$ which satisfies $b_j>k\cdot s_j$ for all $j>i$.
It implies $\frac{b_j}{s_j}>k$, so that  $\varliminf{\frac{F(B)}{(s_i)}}\supseteq {\frac{\FN}{1}}$ holds.
\end{proof}

Moreover, ratios of cuts of $B$ and $C$ to $S$ are equal as long as their lower cuts are equal.
The next theorem states this property, which is essentially the same as Theorem 3.4 of Kalina \cite{Kalina1989-a}.

\begin{thm}\label{uniqueRC}
  Let Borel semisets $B,C\in\mathscr{B}_s$ satisfy $\underline{B}\doteq
  \underline{C}$.
  Then, it is satisfied that $\varliminf{\frac{F(B)}{(s_i)}}\doteq
  \varliminf{\frac{F(C)}{(s_i)}}
  $ for any given BAF $F$ where $(s_i)_{i\in\FN}$ is an approximating sequence of a semiset $S$ having a nonadditive cut.
\end{thm}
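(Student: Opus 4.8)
The plan is to reduce the statement to a comparison of two conveniently chosen approximating sequences, and then to exploit the hypothesis $\underline{B}\doteq\underline{C}$ through the definition of $\doteq$ on Borel cuts. First I would invoke Theorem~\ref{uniqueness}: since $S$ has a nonadditive cut, $\mathscr{O}(s,F)=\mathscr{B}_s$, so for every Borel semiset the limit inferior and the limit superior of its ratio to $S$ coincide up to $\doteq$, and, by the ``moreover'' clause, the value of $\varliminf\frac{F(B)}{(s_i)}$ is independent of the choice of BAF up to $\doteq$. Consequently it suffices to produce \emph{some} approximating sequences $(b_i)_{i\in\FN}$ of $B$ and $(c_i)_{i\in\FN}$ of $C$ with $\varliminf\frac{b_i}{s_i}\doteq\varliminf\frac{c_i}{s_i}$; the assertion for the given $F$ then follows by two applications of BAF-independence. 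I also record that, by Proposition~\ref{limit}, the approximating sequence $(s_i)$ of the nonadditive $S$ is relatively Cauchy, i.e.\ for each $\ell\in\FQ$ there is $i$ with $\bigl|\tfrac{s_j}{s_i}-1\bigr|<|\ell|$ for all $j>i$; this is what lets a gap between $b_i$ and $c_i$ be transported to a controlled gap between the ratio cuts.

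Next I would unfold $\underline{B}\doteq\underline{C}$ into its two defining alternatives and treat them separately. In the first alternative, $(\underline{B}\dmns\underline{C})\cup(\underline{C}\dmns\underline{B})\subseteq\frac{1}{\FN}$, the two lower cuts differ by at most an infinitesimal. If $\underline{B}$ (equivalently $\underline{C}$) is nonadditive, Corollary~\ref{KZ33}(b) places both pairs of cuts inside a single monad, so I may choose $(b_i),(c_i)$ converging to representatives that are infinitesimally close; using the Cauchy property of $(s_i)$, the difference of the ratio cuts is the infinitesimal gap divided by $|S|$, which stays within $\frac{1}{\FN}$ and therefore yields $\varliminf\frac{b_i}{s_i}\doteq\varliminf\frac{c_i}{s_i}$ by the first clause of $\doteq$. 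If instead one of $B,C$ has an additive cut, then by Corollary~\ref{KZ33}(a) its lower and upper cuts agree, and I would route the argument through clauses (i) and (ii) of Theorem~\ref{uniqueness}, which pin the ratio into $\frac{1}{\FN}$ or above $\frac{\FN}{1}$ according to the position of $|B|$ relative to $|S|=\cup\cap(s_i)=\cap\cup(s_i)$, the same position being forced for $C$ by the infinitesimal closeness of the lower cuts.

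In the second alternative, $\underline{B}\cap\underline{C}\supseteq\frac{\FN}{1}$, both lower cuts dominate every finite cut. Here I would split according to the location of the nonadditive $|S|$: when $|S|$ is dominated by $\underline{B}$ and $\underline{C}$, clause (ii) of Theorem~\ref{uniqueness} gives $\varliminf\frac{F(B)}{(s_i)}\supseteq\frac{\FN}{1}$ and likewise for $C$, so the two limits are $\doteq$ by the second clause of the definition; the complementary position is reduced to the first alternative. The step I expect to be the main obstacle is precisely the transport of the coarse equivalence $\doteq$ across the division by $s_i$: division of cuts does not commute with the operators $\cup$ and $\cap$ defining $\varliminf$, and the additive-cut anomalies discussed in Section~\ref{cuts} mean that ``$\underline{B}\doteq\underline{C}$'' must be converted into a genuinely \emph{sequence-level} closeness of $\bigl(\frac{b_i}{s_i}\bigr)$ and $\bigl(\frac{c_i}{s_i}\bigr)$ before the outer $\cup\cap$ is applied. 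Controlling this division uniformly in $i$, rather than only in the limit, is the crux of the argument, and it is where the nonadditivity of $|S|$ (via Proposition~\ref{limit}) is indispensable.
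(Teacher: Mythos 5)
Your proposal has a genuine gap in the case that carries essentially all of the content of the theorem, namely when both $B$ and $C$ have nonadditive cuts. Your plan for that case is to ``choose $(b_i),(c_i)$ converging to representatives that are infinitesimally close'' and to divide the resulting gap by $|S|$; but you never carry out the step that makes this work, and you yourself flag it in your last paragraph as ``the crux of the argument.'' The paper closes exactly this hole: from the hypothesis together with Theorem~\ref{KZ32} (equivalently Corollary~\ref{KZ33}(b)) it extracts a \emph{single} $d\in N$ with $\intrr{}{d}\leq\underline{B}\leq\cl{}{d}$ and $\intrr{}{d}\leq\underline{C}\leq\cl{}{d}$, so that for every $k$ there is an $i$ with $1-\frac{1}{k}<\frac{b_j}{d}<1+\frac{1}{k}$ and $1-\frac{1}{k}<\frac{c_j}{d}<1+\frac{1}{k}$ for all $j>i$; this yields the sequence-level estimate $\bigl|1-\frac{b_j}{c_j}\bigr|<|\ell|$ for any $\ell\in\FQ$, hence $\varliminf\frac{b_i}{c_i}\doteq 1$, and only then does one divide by $s_j$. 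Note that Corollary~\ref{KZ33}(b) applied to $B$ and to $C$ separately gives two numbers $d_B$ and $d_C$; obtaining a common $d$ is precisely where the hypothesis $\underline{B}\doteq\underline{C}$ must be spent, and your sketch does not do this.

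Your treatment of the second alternative of the definition of $\doteq$ (the case $\underline{B}\cap\underline{C}\supseteq\frac{\FN}{1}$) is also incorrect as stated: you invoke clause (ii) of Theorem~\ref{uniqueness} to conclude $\varliminf\frac{F(B)}{(s_i)}\supseteq\frac{\FN}{1}$, but that clause is proved only under the hypothesis that $B$ has an \emph{additive} cut. For a $B$ with a nonadditive infinite cut dominating $|S|$ the ratio is a finite or bounded nonadditive cut (e.g.\ $2$), not something containing $\frac{\FN}{1}$, so the clause simply does not apply; and the remark that ``the complementary position is reduced to the first alternative'' does not hold, since two infinite cuts need not differ by an element of $\frac{1}{\FN}$. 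The preliminary reduction via BAF-independence is legitimate but idle here, because you never construct the convenient sequences it is meant to license; the paper works directly with $F(B)=(b_i)$ and $F(C)=(c_i)$ and needs no such reduction.
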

\begin{proof}
  Given that both $B$ and $C$ have additive cuts $|B|=|C|$.
  Then, the equivalences follow by Theorem \ref{uniqueness}.
  
  Otherwise, both $B$ and $C$ have nonadditive cuts, so that there exists $d\in N$ which satisfies by Theorem \ref{KZ32} that
  \[
  \intrr{\mathscr{}}{d}\ \leq\
  \underline{B}\ \approx\
  \underline{C}\ \approx\
  \overline{B}\ \approx\
  \overline{C}\ \leq\
  \cl{\mathscr{}}{d}.
  \]
  Let $F(B)=(b_i)_{i\in\FN}$ and $F(C)=(c_i)_{\i\in\FN}$.
  Then, for each $k \in\FN\setminus 2$ there exists $i$ which satisfies the following inequations for all $j\in\FN\setminus i$
  \[
  \frac{d-\frac{d}{k}}{d}\ =\ 1-\frac{1}{k}\ <\
  \frac{b_j}{d}\ <\
  1+\frac{1}{k}\ =\ \frac{d+\frac{d}{k}}{d}  
  \]
  and
  \[
  \frac{d-\frac{d}{k}}{d}\ =\ 1-\frac{1}{k}\ <\
  \frac{c_j}{d}\ <\
  1+\frac{1}{k}\ =\ \frac{d+\frac{d}{k}}{d}.  
  \]
  It implies that there exists $j\in\FN$ which satisfies $\left| 1-\frac{b_j}{c_j}\right|<|\ell|$ for any given $\ell\in\FQ$, so that $\varliminf_{i\in\FN}{\frac{b_i}{c_i}}\doteq
  {1}$
  and $\varliminf_{i\in\FN}{\frac{b_i}{s_i}}\doteq
  \varliminf{\frac{c_i}{s_i}}$ holds.
  Consequently, $\varliminf{\frac{F(B)}{(s_j)}}\doteq
  \varliminf{\frac{F(C)}{(s_j)}}
  $ follows.
\end{proof}

As we shall see, this theorem assures us that Kalina measures of the Borel semisets which have nonadditive cuts coincide with classical measures when $S$ has nonadditive cut.
However, the next theorem, which is essentially the same as Theorem 4.2 of Kalina \cite{Kalina1989-a}, shows us somewhat strange aspects of rational cuts.
\begin{thm}\label{upper}
Let $S$ and $(s_i)_{i\in\FN}$ be the same as the last theorem except that $S$ has an additive cut.
Let $B\in\mathscr{B}_s$ and $|B|=\cup\cap(s_i)_{i\in\FN}$.
Then, 
\begin{enumerate}[i.)]
\item for each positive $r\in\FQ$ there exists $F$ which satisfies ${\frac{F(B)}{(s_i)}}={r}$. 
Moreover there exist $(\underline{b_i})_{i\in\FN}$ and $(\overline{b_i})_{i\in\FN}$ which satisfies
\[
\varliminf{\frac{\underline{b_i}}{s_i}}={\frac{1}{\FN}}\quad \text{ and }\quad 
\varliminf{\frac{\overline{b_i}}{s_i}}={\frac{\FN}{1}},
\]

\item and also exists $G$ in which $\varliminf{\frac{G(B)}{(s_i)}}\not\doteq
  \varlimsup{\frac{G(B)}{(s_i)}}$.
\end{enumerate}
\end{thm}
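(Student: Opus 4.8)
The plan is to turn the hypothesis into a statement about a single additive cut. Since $(s_i)_{i\in\FN}$ approximates $S$ and $|S|$ is additive, $\underline S=\overline S=|S|=:A$ by (a) of Corollary \ref{KZ33}, and the assumption $|B|=\cup\cap(s_i)$ then reads $|B|=A$, so $\underline B=\overline B=A$ as well. Everything reduces to exhibiting, for each prescribed ratio behaviour, a sequence $(b_i)_{i\in\FN}$ of cuts of naturals with $\cup\cap b_i=\cap\cup b_i=A$ — that is, a genuine approximating sequence of $B$ — together with the observation that the freedom in choosing a coding pair of $\mathscr{B}_s$ and the generating function underlying a BAF lets some $F$ (resp.\ $G$) realise it. Writing $s_i=\isrc{n_i}$ with $n_i\in N$, every candidate $b_i$ will be obtained by reparametrising $n_i$, and the recurring task will be to verify the two cut-equalities $\cup\cap b_i=\cap\cup b_i=A$, which rest on the absorption behaviour of the additive cut $A$ (note $A\supseteq\frac{\FN}{1}$, since $A$ is additive and realised by cuts of naturals).

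For the first assertion of (i) I would take $b_i=\isrc{\lfloor r\,n_i\rfloor}$ for the given positive $r\in\FQ$. Additivity gives $kA=A$ for every finite $k$ and hence $rA=A$ for finite rational $r$, and multiplication by the fixed finite $r$ commutes with $\cup\cap$ and $\cap\cup$; thus $\cup\cap\isrc{\lfloor rn_i\rfloor}=\cap\cup\isrc{\lfloor rn_i\rfloor}=rA=A$, so $(b_i)$ approximates $B$, while $b_i/s_i$ has limit $\isrc{r}=r$. For the two extreme sequences I would fix a finite but unbounded factor $(h_i)_{i\in\FN}$ (a sequence of finite naturals with $h_i\to\infty$, i.e.\ an approximating sequence of $\frac{\FN}{1}$) and set $\underline{b_i}=\isrc{\lfloor n_i/h_i\rfloor}$ and $\overline{b_i}=\isrc{n_i\cdot h_i}$. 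Using the additivity-driven identities $A\cdot\frac{\FN}{1}=A$ and $A\cdot\frac{1}{\FN}=A$, both reparametrisations again satisfy $\cup\cap=\cap\cup=A$ and so approximate $B$; their ratios to $(s_i)$ are $\isrc{1/h_i}$ and $\isrc{h_i}$, whose $\varliminf$ (that is, $\cup\cap$) are exactly $\frac{1}{\FN}$ and $\frac{\FN}{1}$.

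For (ii) I would interleave the two extreme sequences just built, letting $G(B)=(g_i)_{i\in\FN}$ with $g_i=\overline{b_i}$ for even $i$ and $g_i=\underline{b_i}$ for odd $i$. Since each of $(\underline{b_i})$ and $(\overline{b_i})$ already has $\cup\cap=\cap\cup=A$, so does the interleaving, whence $(g_i)$ is again an approximating sequence of $B$ realised by some BAF $G$. Along even indices the ratio tends to $\frac{\FN}{1}$ and along odd indices to $\frac{1}{\FN}$, so the tail-intersections feeding $\varliminf=\cup\cap$ are dominated by the infinitesimal odd terms and the tail-unions feeding $\varlimsup=\cap\cup$ by the unbounded even terms. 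This yields $\varliminf\frac{G(B)}{(s_i)}=\frac{1}{\FN}$ and $\varlimsup\frac{G(B)}{(s_i)}=\frac{\FN}{1}$, which are plainly not $\doteq$-equivalent.

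The main obstacle I anticipate is not the ratio computations, which are transparent once the sequences are in hand, but the verification that each reparametrised or interleaved sequence is a \emph{bona fide} approximating sequence of $B$, i.e.\ that $\cup\cap=\cap\cup=A$ survives multiplication and division by a finite-unbounded factor. The inclusions $A\subseteq\cup\cap$ are easy, from $n_i\le n_ih_i$ and $\lfloor n_i/h_i\rfloor\le n_i$; the reverse inclusions $\cap\cup\subseteq A$ are the delicate ones and are precisely where additivity must be invoked, through $A\cdot\frac{\FN}{1}=A$, $A\cdot\frac{1}{\FN}=A$ and the absorption Lemmas \ref{s5a-1} and \ref{s5a-2}: a fixed $x\notin A$ must be shown to exceed $n_jh_j$ (resp.\ $\lfloor n_j/h_j\rfloor$) cofinally often, which uses that $x$ sits a non-infinitesimal multiple beyond $A$ while the finite factor $h_j$ only perturbs $n_j$ within $A$'s own galaxy. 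A secondary point to spell out is the passage from an explicitly constructed approximating sequence to an actual BAF, for which I would appeal to the latitude in selecting the coding pair of $\mathscr{B}_s$ and the generating function determining the BAF.
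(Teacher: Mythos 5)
Your constructions for the first clause of i.) (namely $b_i=\lfloor r\cdot s_i\rfloor$), for the upper extreme ($\overline{b_i}=s_i\cdot h_i$, which for $h_i=i$ is exactly the paper's $i\cdot s_i$), and the interleaving device for ii.) all coincide with the paper's. The genuine gap is in your lower extreme. You take an \emph{arbitrary} unbounded sequence $(h_i)$ of finite naturals and set $\underline{b_i}=\lfloor s_i/h_i\rfloor$, asserting that $\cup\cap\underline{b_i}=\cap\cup\underline{b_i}=|B|$ follows from additivity. This is precisely the pitfall the paper flags in a footnote: with $|B|=\isrc{\frac{\FN}{1}}$, $s_i=i$ and $h_i=i$ one gets $\underline{b_i}=1$ for every $i$, so $\cup\cap\underline{b_i}=\cap\cup\underline{b_i}=\isrc{1}\subsetneq|B|$ and $(\underline{b_i})$ is not an approximating sequence of $B$ at all. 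Note also that your ``easy inclusion'' points the wrong way: $\lfloor s_i/h_i\rfloor\le s_i$ yields $\cup\cap\underline{b_i}\subseteq|B|$, and it is the opposite inclusion $|B|\subseteq\cup\cap\underline{b_i}$ that is delicate and, for a general unbounded $(h_i)$, false. The identity $A\cdot\frac{1}{\FN}=A$ that you invoke is not available either (and in any case pointwise division of the terms $s_i$ is not the cut product), so additivity alone cannot rescue the construction.

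The repair is to calibrate the divisor to the growth of $(s_i)$ rather than choosing it freely. The paper defines thresholds $n(1)=0$ and $n(k)=\min\{i\in\FN\,;\,i>n(k-1)\wedge(\forall j\ge i)(\lfloor s_j/k\rfloor\ge s_k)\}$ --- these exist because $|B|$ is additive --- and sets $\underline{b_i}=\lfloor s_i/k\rfloor$ for $n(k)\le i<n(k+1)$. The tail condition $\lfloor s_j/k\rfloor\ge s_k$ is exactly what forces $\cup\cap\underline{b_i}=\cap\cup\underline{b_i}=|B|$ while the divisor still tends to infinity, giving $\varliminf\frac{\underline{b_i}}{s_i}=\isrc{\frac{1}{\FN}}$. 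If you keep your formulation you must add, and justify, the hypothesis that $(h_i)$ grows slowly enough that $\lfloor s_i/h_i\rfloor$ still exhausts $|B|$. As written, the ``moreover'' clause of i.) is unproven, and your ii.), which interleaves the two extremes, inherits the defect (though ii.) could be salvaged independently by interleaving $(\lfloor r\cdot s_i\rfloor)$ and $(\lfloor r'\cdot s_i\rfloor)$ for $r\not\doteq r'$, which is essentially what the paper does).
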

\begin{proof}
  To simplify the proof, let us restrict our attention to the case where $|B|$ is a $\sigma$-class.
  The other case where $|B|$ is $\pi$-class can be proven similarly.

  Let us start with a proof of i.) by showing that for each positive $r\in\FQ$ there exists $F$ which satisfies $\varliminf{\frac{F(B)}{(s_i)}}={r}$.
To build such $F$, put 
\[
F(B)=(b_i)\quad \text{in which}\quad b_i\ =\ \lfloor r\cdot s_i\rfloor.
\]
Since $B$ is additive, the equation $\cap\cup(b_i)\ =\ \cap\cup(s_i)\ =\ \cup\cap(s_i)\ =\ \cup\cap(b_i)\ =\ |B|$ holds.
Thus, $(b_i)$ is also an approximating sequence of $B$.

To construct a BAF $(\underline{b_i})_{i\in\FN}$ which satisfies $\varliminf{\frac{\underline{b_i}}{s_i}}={\frac{1}{\FN}}$, let us define a sequence of natural numbers $(n(k))_{k\in\FN\setminus\{0\}}$ inductively as
\[
n(1)\ =\ 0\quad\text{and}\quad n(k)\ =\ \min \left\{ i\in\FN\ ;\ \left(i>n(k-1)\right)\wedge \left(\forall j\geq i\right)\left(\left\lfloor\frac{s_j}{k}\right\rfloor\geq s_k\right)\right\}.
\]
Since $|B|$ is $\sigma$-additive, such $n(k)$ exists for each $k$.
Then put\footnote{
Simply putting
$\underline{b_i} \ =\ \left\lfloor \frac{s_i}{i}\right\rfloor$
may seem to work, but it doesn't.
Since it may happen that there exists $k\in\FN$ satisfying $\underline{b_i} \leq k$ for all $i\in\FN$ so that $\cap\cup \underline{b_i}<|B|$.
}
\[
\underline{b_i}\ =\ \left\lfloor\frac{s_i}{k}\right\rfloor \quad\text{ for }\quad n(k)\leq i< n(k+1).
\]
It is obvious that $(\underline{b_i})_{i\in\FN}$ approximates $B$.
Since $k$ increases limitlessly as $i$ grows, the equations $\varliminf{\frac{\underline{b_i}}{s_i}}=\varliminf{\frac{\underline{1}}{k}}={\frac{1}{\FN}}$ follow.

Lastly, to build a BAF $(\overline{b_i})_{i\in\FN}$ which satisfies $\varliminf{\frac{(\overline{b_i})}{(s_i)}}={\frac{\FN}{1}}$, put simply
\[
\overline{b_i} \ =\ i\cdot s_i.
\]
Then $(\overline{b_i})_{i\in\FN}$ approximates $B$ and, thus, $\varliminf{\frac{(\overline{b_i})}{(s_i)}}={\frac{\FN}{1}}$ follows.
\medskip

To prove ii.) let us consider two different BAFs $F$ and $H$ such that $\lim{\frac{F(B)}{(s_i)}}\not\doteq
\lim{\frac{H(B)}{(s_i)}}$.
Let also $F$ and $H$ satisfy $F(B)=(o_i)_{i\in\FN}$ and $H(B)=(e_i)_{i\in\FN}$.
Define another BAF $G$ as
\[
G(B)\ =\ \begin{cases}
o_i & \quad \text{if $i$ is odd}\\
e_i & \quad \text{otherwise}
\end{cases}
\]
Then, obviously $G(B)$ approximates $B$ but $\varliminf{\frac{G(B)}{(s_i)}}\not\doteq
\varlimsup{\frac{G(B)}{(s_i)}}$.
\end{proof}

The theorem shows that the ratio of $B$ to $S$ both of which have the same additive cut can be any rational cuts ranging from ${\frac{1}{\FN}}$ to ${\frac{\FN}{1}}$, which correspond to the results of external and internal quotient, respectively.
This fact indicates that the values of division by additive cuts are determined by the way they are approximated.
$B$ may have different ratio to $S$ depending on the BAF, while external and internal quotients remain constant at its minimum $\frac{1}{\FN}$ and its maximum $\frac{\FN}{1}$.

Remaining two cases of ratio of cut $B$ having an additive cut to $S$ are given by the next lemma, which is essentially the same as Lemma 4.1 of Kalina \cite{Kalina1989-a}.

\begin{lemma}
Let $S$ and $(s_i)_{i\in\FN}$ be the same as the last theorem.
Let $B\in\mathscr{B}_s$, $(b_i)_{i\in\FN}$ be an approximating sequence of $B$, and $F$ be any BAF. 
Then the following inclusions hold:
\begin{enumerate}[i.)]
\item if $\underline{B}\supset\cap\cup(s_i)_{i\in\FN}$ then $\varliminf{\frac{F(B)}{(s_i)}}\supseteq {\frac{\FN}{1}}$,
\item if $\overline{B}\subset\cup\cap(s_i)_{i\in\FN}$ then $\varlimsup{\frac{F(B)}{(s_i)}}\subseteq {\frac{1}{\FN}}$.
\end{enumerate}
\end{lemma}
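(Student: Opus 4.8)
The plan is to mirror the treatment of cases i.) and ii.) in Theorem~\ref{uniqueness}, the only structural difference being that here it is $S$ rather than $B$ that carries the additive cut, so the additivity of $|S|$ must be exploited through scaling rather than that of $B$. I would first record the standing facts. Write $F(B)=(b_i)_{i\in\FN}$; since this is an approximating sequence of $B$ we have $\cup\cap(b_i)=\underline B$ and $\cap\cup(b_i)=\overline B$, and the argument below uses only these two identities, so the conclusion is independent of the BAF. Because $S$ has an additive cut, Corollary~\ref{KZ33}(a) gives $\cup\cap(s_i)=\cap\cup(s_i)=|S|$, and $|S|$ additive yields $k\cdot|S|=|S|$ for every $k\in\FN$. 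From this I would extract the \emph{scaling property} that does the real work: $x\in|S|\Rightarrow k\cdot x\in|S|$, and contrapositively $x\notin|S|\Rightarrow x/k\notin|S|$, for every $k\in\FN$.

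For i.), I would start from the strict inclusion $\underline B\supset\cap\cup(s_i)=|S|$ to fix a witness $r\in\underline B\setminus|S|$. From $r\in\underline B=\cup\cap(b_i)$ one obtains an index beyond which $b_j>r$, while from $r\notin|S|$ and the scaling property $r/k\notin|S|=\cap\cup(s_i)$ one obtains, for each fixed $k\in\FN$, an index beyond which $s_j\leq r/k$, i.e.\ $k\cdot s_j\leq r$. Combining the two gives $b_j>r\geq k\cdot s_j$ for all sufficiently large $j$, hence $k\in\frac{b_j}{s_j}$ eventually and therefore $k\in\cup\cap\frac{b_j}{s_j}=\varliminf\frac{F(B)}{(s_i)}$. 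As $k\in\FN$ is arbitrary, $\varliminf\frac{F(B)}{(s_i)}\supseteq\frac{\FN}{1}$.

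Case ii.) is dual. I would take a witness $r\in|S|\setminus\overline B$ supplied by $\overline B\subset\cup\cap(s_i)=|S|$. Then $r\notin\overline B=\cap\cup(b_i)$ gives an index beyond which $b_j\leq r$, while $r\in|S|$ together with $k\cdot r\in|S|=\cup\cap(s_i)$ gives, for each $k$, an index beyond which $s_j>k\cdot r$. Hence $s_j>k\cdot r\geq k\cdot b_j$ eventually, so $\frac1k\notin\frac{b_j}{s_j}$ eventually and $\frac1k\notin\cap\cup\frac{b_j}{s_j}=\varlimsup\frac{F(B)}{(s_i)}$; letting $k$ vary gives $\varlimsup\frac{F(B)}{(s_i)}\subseteq\frac{1}{\FN}$.

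The routine parts are the translation between cut membership $r\in\isrc{n}$ and the numerical inequality $r<n$, and the bookkeeping of ``eventually''. The one genuinely load-bearing step—and the place to be careful—is the passage from a single witness $r\notin|S|$ (resp.\ $r\in|S|$) to the whole family of estimates $r/k\notin|S|$ (resp.\ $k\cdot r\in|S|$): this is exactly where the additivity $k\cdot|S|=|S|$ is indispensable, and it is what forces the ratio all the way down to $\frac{1}{\FN}$ (resp.\ up to $\frac{\FN}{1}$) rather than merely past $1$. I would therefore state this scaling property for additive cuts cleanly once at the outset before deploying it symmetrically in both cases.
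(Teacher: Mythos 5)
Your proof is correct and follows essentially the same route as the paper: fix a witness $d$ in $\underline{B}\setminus|S|$ (resp.\ $|S|\setminus\overline{B}$), use the defining identities of the approximating sequences to get the eventual inequalities on $b_j$ and $s_j$, and invoke the additivity of $|S|$ via $k\cdot|S|=|S|$ to push the ratio past every $k$ (resp.\ below every $\frac{1}{k}$). The only difference is cosmetic: the paper writes out case ii.) and declares i.) analogous, whereas you spell out both.
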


\begin{proof}
To prove ii.), let $F(B)=(b_i)_{i\in\FN}$ and $d\in\cup\cap(s_i)_{i\in\FN}\setminus \overline{B}$.
Then there exists $i$ which satisfies $s_j<d$ for all $j>i$.
Since $|S|$ is additive and $d\in|S|$, for each $k$ there exists $i$ such that $s_j>d\cdot k$, so that $\frac{b_j}{s_j}<\frac{d}{s_j}<\frac{1}{k}$ holds for all $j>i$, hence $\varlimsup{\frac{F(B)}{(s_i)}}\subseteq{\frac{1}{\FN}}$ is satisfied.

The proof of i.) is essentially the same.
\end{proof}

Notice that all the theorems so far deal solely with Borel semisets but nonsemiset.
It is because the set-theoretically definable proper classes, such as the universal class $V$ or real intervals, have any approximating sequences since, say, $|V|=|[0,1]|=\cup_{i\in N}\isrc{i}$.
This problem is dealt in the next section.

\section{$\sigma$-consistency of Approximating Sequences}
Additive cuts face some difficulty also when adding them countably many times.
By definition given in the last of Section \ref{cuts}, the lower bound of countable sum of $\frac{1}{\FN}$ is $\frac{1}{\FN}$, while the upper bound is $\frac{\FN}{1}$.
However, it is also possible to set value for each countable sum by utilizing their approximating sequences.
To see how it is possible, let us start with building $\sigma$-partitions.

$\mathscr{S}$ as defined below is said to be a \textit{$\sigma$-partition} of $S$
\[
\mathscr{S}\ =\ \left\{S_{(i,j)}\subseteq S\ ;\
\begin{matrix}
  (\forall k\in\FN)(\forall m\in j(k))
  \left(
  \begin{matrix}
    \left(
    \left(i\ne k \right)\vee\left(j\ne m\right)
    \right)\\
    \Rightarrow
    \left(S_{(i,j)}\cap S_{(k,m)}=\emptyset\right)
  \end{matrix}
  \right)\\
  \wedge\left(\cup_{i\in\FN}\cup_{j\in j(i)} S_{(i,j)}=S\right)
\end{matrix}
\right\} \quad
\]
where $j(\cdot):\alpha\rightarrow\beta$ for some $\alpha,\beta\in N\setminus\FN$ which satisfies $j(i)\in\FN$ for each $i\in\FN$.
In particular, when $j(i)=1$ for all $i\in\FN$ the condition is simplified as follows
\[
\mathscr{S}\ =\ \left\{S_{i}\subseteq S\ ;\
  \left((\forall j\ne i)\left(S_{i}\cap S_{j}=\emptyset\right)\right)
  \wedge\left(\cup_{i\in\FN} S_{i}=S\right)
\right\}.
\]
Notice that if $j(\cdot)$ is, say, the exponential, $\mathscr{S}$ is uncountable and, thus, can cover uncountable classes, while it is countable if $j(\cdot)$ is the polynomial.

To measure real intervals, assuring the existence of $\sigma$-partitions of real continua\footnote{
A continuum consists of a pair $\mathscr{C}=\langle C,\doteq_\mathscr{C}\rangle$, in which $C$ is set-theoretically definable and $\doteq_\mathscr{C}$ is an indiscernibility equivalence on $C$, and a monad of $\mathscr{C}$ is an equivalence class of $x\in C$ defined as $\mon{C}{x}=\{y\in C\ ;\ x\doteq y\}$.
See Sakahara and Sato \cite{topology} for details.
} is essential.
The next theorem assures us that every continuum has a $\sigma$-partition.

\begin{thm}\label{sigma-par-cont}
  Given a continuum $\mathscr{C}=\langle C,\doteq_\mathscr{C}\rangle$, the class of all monads is a $\sigma$-partition. 
\end{thm}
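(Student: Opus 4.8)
The plan is to verify the two defining clauses of a $\sigma$-partition for the family of monads $\{\mon{C}{x}\ ;\ x\in C\}$, taking $S=C$. The disjointness and covering clauses are immediate from $\doteq_\mathscr{C}$ being an equivalence relation: two monads are either equal or disjoint, and every $x\in C$ lies in $\mon{C}{x}$, so the monads are pairwise disjoint and their union is $C$. The substantive task is to produce an index function $j(\cdot)\colon\alpha\to\beta$ with $\alpha,\beta\in N\setminus\FN$ and $j(i)\in\FN$ for $i\in\FN$, together with a presentation of the monads in the indexed form $S_{(i,j)}$ governed by that function.

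First I would use that $\doteq_\mathscr{C}$, being an indiscernibility equivalence, is a $\pi$-class and so admits a representation $\doteq_\mathscr{C}=\bigcap_{n\in\FN}R_n$ by a decreasing sequence $(R_n)_{n\in\FN}$ of set-theoretically definable equivalences on the definable class $C$. Because $\mathscr{C}$ is a continuum, each coarse stage $R_n$ partitions $C$ into only finitely many blocks; I would let $j(n)\in\FN$ be that number and index the $R_n$-blocks by $j<j(n)$. These finite partitions refine as $n$ grows, forming a tree whose infinite branches are exactly the monads $\mon{C}{x}=\bigcap_{n\in\FN}[x]_{R_n}$, and whose branching is recorded by $j(\cdot)$.

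Next I would appeal to the Prolongation Axiom to extend $(R_n)_{n\in\FN}$ to a set sequence $(R_n)_{n<\alpha}$ over some $\alpha\in N\setminus\FN$, still consisting of definable equivalences that refine with $n$, so that the block-count becomes an honest set function $j(\cdot)\colon\alpha\to\beta$ for some $\beta\in N\setminus\FN$ with $j(n)\in\FN$ on $\FN$, precisely as the $\sigma$-partition demands. By overspill the nested intersection defining each monad is attained at an infinite stage, i.e.\ there is an infinite $\nu<\alpha$ whose blocks $[x]_{R_\nu}$ coincide with the monads; indexing these blocks by $j<j(\nu)$ then displays the monads as the pieces $S_{(\nu,j)}$ controlled by $j(\cdot)$.

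The hard part will be this prolongation step. I must arrange that some infinite level of the prolonged sequence reproduces $\doteq_\mathscr{C}$ exactly---neither coarser, which would fuse distinct monads, nor strictly finer, which would split a single monad---while keeping $j(n)$ finite for every $n\in\FN$. Pinning down where the prolonged chain of equivalences crosses $\doteq_\mathscr{C}$, using the total boundedness built into a continuum to secure finiteness of each $j(n)$, and then confirming that the family $\{S_{(\nu,j)}\}$ so obtained genuinely meets the disjointness and covering clauses, is where the argument will concentrate.
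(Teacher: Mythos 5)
Your reduction of the problem to exhibiting the index function $j(\cdot)$ is the right instinct, and the disjointness/covering observations are fine, but the indexing you end up with does not---and cannot---meet the definition of a $\sigma$-partition. That definition requires the pieces to be enumerated as $S_{(i,j)}$ with $i$ ranging over $\FN$ and, for each \emph{finite} $i$, only $j(i)\in\FN$ many pieces, the covering clause being $\cup_{i\in\FN}\cup_{j\in j(i)}S_{(i,j)}=S$. Your construction places every monad at a single infinite level $\nu\in\alpha\setminus\FN$, indexed by $j<j(\nu)$ with $j(\nu)$ necessarily an infinite natural number; such pieces are simply not swept up by the union over $i\in\FN$, so the covering clause fails for the family as you have indexed it. Worse, the prolongation/overspill step you flag as ``the hard part'' is actually impossible: each $R_\nu$ in the prolonged sequence is a set-theoretically definable relation on $C$, whereas $\doteq_\mathscr{C}=\bigcap_{n\in\FN}R_n$ is a proper semiset for any nontrivial continuum, so no level of the prolonged chain can coincide with it---every infinite level is strictly finer and splits monads. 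A further (smaller) defect is that the members of a generating sequence of an indiscernibility equivalence are in general only reflexive and symmetric with $R_{k+1}\circ R_{k+1}\subseteq R_k$, not transitive (cf.\ $R_k=\{\langle a,b\rangle\ ;\ |a-b|\leq 2^{-k}\}$ in Example \ref{continua}), so ``the $R_n$-blocks'' are not well defined in the first place.

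The paper's proof sidesteps all of this by indexing \emph{representatives} rather than blocks. Take an increasing sequence $(X_i)_{i\in\FN}$ of maximal $R_i$-nets; compactness of the continuum makes each $X_i$ a finite set, so $C_0=X_0$ and $C_i=X_i\setminus X_{i-1}$ are finite, and one sets $j(i)=|C_i|$ and enumerates $C_i=\{c_{(i,j)}\ ;\ j\in j(i)\}$. Distinct points of a net are $R_i$-separated, hence mutually discernible, so their monads are pairwise disjoint; maximality gives $R_i``X_i=C$ for every $i$, whence every monad contains some $c_{(i,j)}$, which yields the covering with the required $\FN$-indexed, finitely-branching shape and no appeal to prolongation. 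If you want to rescue your refining-partition picture, you would still have to convert it into a choice of one representative per monad distributed over finitely many new points per \emph{finite} stage---which is exactly what the maximal-net construction accomplishes.
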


\begin{proof}
  Given that $(R_i)_{i\in\FN}$ is a decreasing generating sequence of $\doteq_\mathscr{C}$.
  Let $X_i$ be a maximal $R_i$-net, which does not include any distinct pair $x,y\in X_i$ such that $\langle x,y\rangle\in R_i$ and for each $x\in C$ there exists $y\in X_i$ satisfying $\langle x, y\rangle\in R_i$.
  Let $(X_i)_{i\in\FN}$ be a monotone sequence of maximal $R_i$-nets and define $C_0=X_0$ and $C_i=X_i\setminus X_{i-1}$ for each $i>0$.

  Since $|C_i|\in\FN$ by the second theorem at the p.85 of Vop\v{e}nka \cite{ast}, a function $j(\cdot)$ defined as $j(i)=|C_i|$ for each $i\in\FN$ is finite valued, so that each member of $C_i$ for each $i\in\FN$ can be numbered with $j\in j(i)$ as $C_i=\cup_{j\in j(i)} \{c_{(i,j)}\}$.
  Since $X_i$ is maximal $R_i$-net for each $i$, $R_i``X_i=C$ and each pair $x,y\in X_i$ is mutually discernible.
  It implies that  $\mathscr{S}_X=\{\mon{C}{c_{(i,j)}}\ ;\ (i\in\FN)(j\in j(i)) \}$ forms a $\sigma$-partition.
\end{proof}
The class $\mathscr{S}_X$ constructed in the last proof is said to be a \textit{topological $\sigma$-partition} numbered according to $(X_i)_{i\in\FN}$, and the class $\choice{X}{\mathscr{C}}$ defined as 
$\choice{X}{\mathscr{C}}=\{c_{(i,j)}\,;\, (\exists i\in\FN)(\exists j\in j(i))(c_{(i,j)}\in C_i)\}$
is said to be a \textit{choice class} of $\mathscr{C}$ according to $(X_i)_{i\in\FN}$. 
By definition, $\choice{X}{\mathscr{C}}$ is a $\sigma$-class.

A BAF $F$ is said to be \textit{$\sigma$-consistent on} $\mathscr{S}$ iff for all $i\in I$ and $j\in h(i)$ where $I\subseteq \FN$ and $h(i)\subseteq j(i)$ the following equation holds:
\[
\varliminf{\frac{\sum_{i\in I}\sum_{j\in h(i)}F(S_{(i,j)})}{F(S)}}\ \doteq
\ \varliminf{\frac{F(\cup_{i\in I}\cup_{j\in h(i)}S_{(i,j)})}{F(S)}},
\]
where the sum $\sum_{i\in I}F(S_{(i,j)})$ of approximating sequences $F(S_{(i,j)})=(s_{(i,j),k})_{k\in\FN}$ abbreviates the sequence $\left(\sum_{i\in I}s_{(i,j),k}\right)_{k\in\FN}$.

Each $\sigma$-class has one trivial $\sigma$-consistent BAF.
Firstly, let us define, for notational ease, a \textit{$\ell$-step lagged approximate sequence} of $\alpha$, denoted as $(\alpha)_\ell$, which consists of $a_i=0$ for $i<\ell$ and $a_i=\alpha$ for $i\geq \ell$.
When $\ell=0$, $(\alpha)_0$ is  denoted simply as $(\alpha)$ and said to be a \textit{constant approximate sequence} of $\alpha$.

Then, the BAF on the $\sigma$-class $\mathrm{Ch}=\{c_{(i,j)}\,;\,(\forall i\in\FN)(\exists j\in j(i))\}$, which is said to be a \textit{canonical} BAF of $\mathrm{Ch}$, is given as
\[
F_{\mathrm{Ch}}(\{c_{(i,j)}\})\ =\ \left(1\right)_i\quad\text{for each $i\in\FN$ and $j\in j(i)$}
\]
and $F_{\mathrm{Ch}}(B)\ =\ \sum_{c\in B} F_{\mathrm{Ch}}(\{c\})$ for each $B\subseteq \mathrm{Ch}$.

While every canonical BAF is $\sigma$-consistent by definition, BAF in general is not.
However, as shown in the next theorem, every continuum has one.

\begin{thm}\label{sigmacnt}
  Let $\mathscr{C}$ be a continuum.
  Then, there exists a $\sigma$-consistent BAF on any topological $\sigma$-partition $\mathscr{S}_X$ of $\mathscr{C}$.
\end{thm}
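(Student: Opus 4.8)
The plan is to obtain the required BAF by transporting the canonical BAF of the choice class $\choice{X}{\mathscr{C}}$ onto the partition $\mathscr{S}_X$ along the bijection $\mon{C}{c_{(i,j)}}\leftrightarrow c_{(i,j)}$ furnished by the net construction of Theorem~\ref{sigma-par-cont}. Concretely, for a union of cells $U=\cup_{(i,j)\in T}\mon{C}{c_{(i,j)}}$ I would declare $F(U)$ to be the canonical approximating sequence $F_{\mathrm{Ch}}(\{c_{(i,j)}\,;\,(i,j)\in T\})$, whose $k$-th term counts the representatives already present in the net, namely $F(U)_k=|\{(i,j)\in T\,;\,i\leq k\}|$. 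In particular each single cell then receives $F(\mon{C}{c_{(i,j)}})=(1)_i$, and the whole continuum receives $F(S)=(|X_k|)_{k\in\FN}$, since $|X_k|=\sum_{i\leq k}j(i)$ is exactly the number of representatives entering by stage $k$. Because each monad is a $\pi$-class and $\choice{X}{\mathscr{C}}$ is a $\sigma$-class, $F_{\mathrm{Ch}}$ is available and $F$ is well defined on all countable unions of cells.

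The verification of $\sigma$-consistency is then almost tautological and is where I would begin. For any $I\subseteq\FN$ and $h(i)\subseteq j(i)$ the two numerators of the defining equivalence coincide term by term: the summed numerator equals $\bigl(\sum_{i\in I,\ i\leq k}|h(i)|\bigr)_{k\in\FN}$, whereas the numerator of the union equals $\bigl(|\{(i,j)\,;\,i\in I,\ j\in h(i),\ i\leq k\}|\bigr)_{k\in\FN}$, and these are identical because the cells are pairwise disjoint, so that representative-counting is additive. Dividing both by $F(S)$ and applying $\varliminf$ therefore produces literally equal rational cuts, so the required $\doteq$ holds a fortiori; this is precisely the mechanism that makes the canonical BAF $\sigma$-consistent, now inherited by $F$.

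The real work, and the step I would treat most carefully, is confirming that $F$ is a genuine BAF, i.e. that each monad-counting sequence $F(U)$ is an approximating sequence of the union $U$ it is attached to. Here the monotonicity of $(X_k)_{k\in\FN}$ and the maximality of each $R_k$-net are decisive: because $R_k``X_k=C$ and distinct net points are $R_k$-discernible while $(R_k)$ generates $\doteq_\mathscr{C}$, every monad eventually contains exactly one representative, so the count $F(U)_k$ tracks the cut of $U$ rather than over- or under-shooting it, the finite lag by which a representative first enters the net being absorbed by Lemma~\ref{fingap}. The delicate point is that a union of cells generically carries an additive cut, for which $\sigma$-superadditivity allows $\udot{\sum}$ and $\dot{\sum}$ to diverge, and one must show that counting through the maximal refining nets pins $\cup\cap F(U)$ and $\cap\cup F(U)$ onto $\underline{U}$ and $\overline{U}$ without the overshoot that a careless BAF would incur. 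I expect that establishing this no-overshoot property from maximality, rather than $\sigma$-consistency itself, is the genuine obstacle.
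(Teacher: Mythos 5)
There is a genuine gap, and it sits exactly at the step you flag as ``the real work.'' A BAF must assign to each class an approximating sequence \emph{of that class}, i.e.\ a sequence of cuts $(s_k)$ with $\cup\cap s_k=\underline{U}$ and $\cap\cup s_k=\overline{U}$. The cells of the topological $\sigma$-partition $\mathscr{S}_X$ are the monads $\mon{C}{c_{(i,j)}}$, and the cut of a monad is the number of elements of $C$ it contains --- typically an infinite additive cut (in Example~\ref{continua} it is $\cap_k\isrc{2^{\varepsilon-k+1}}$) --- not $\isrc{1}$. Your transported sequence $F(\mon{C}{c_{(i,j)}})=(1)_i$ satisfies $\cup\cap(1)_i=\cap\cup(1)_i=\isrc{1}$, so it is an approximating sequence of the singleton $\{c_{(i,j)}\}$ in the choice class, but not of the monad; likewise $F(U)_k=|\{(i,j)\in T\,;\,i\leq k\}|$ counts representatives, not elements of $U$, and so cannot pin down $\underline{U}$ and $\overline{U}$. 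Maximality of the nets guarantees one representative per monad, which is irrelevant to the cardinality of the monad itself, so the ``no-overshoot'' argument you sketch cannot be completed: the proposed $F$ is simply not a BAF on $\mathscr{S}_X$. (Relatedly, the identification of ratios computed on the choice class with ratios computed on the partition is the content of Theorem~\ref{coincidence}, and it requires the almost-uniform hypothesis, which Theorem~\ref{sigmacnt} does not assume.)

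The paper's proof instead assigns to each monad the genuinely topological sequence $S_{(i,j),k}=\emptyset$ for $k<i$ and $S_{(i,j),k}=R_k``\{c_{(i,j)}\}$ for $k\geq i$, and to a union of cells the sequence of cuts $\bigl|\cup_{\ell\in I}\cup_{j\in h(\ell)}S_{(\ell,j),k}\bigr|$ of the \emph{unions of these neighborhoods}. Monotonicity of $(R_k)$ makes each such sequence a monotone generating sequence of the corresponding figure, so Theorem~\ref{monotone-BGF} and Lemma~\ref{fingap} give that the cut sequences really approximate the monads and their unions; eventual disjointness of $S_{x,k}$ and $S_{y,k}$ for discernible $x,y$ then makes the cut of a union agree with the sum of cuts, which is what delivers $\sigma$-consistency. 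If you want to salvage your approach, you must replace the representative count by these neighborhood cardinalities; the counting-of-representatives picture only becomes legitimate after Theorem~\ref{coincidence}, and only under almost uniformity.
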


\begin{proof}
  Let $(R_i)_{i\in\FN}$ be a monotone generating sequence of $\doteq_\mathscr{C}$ and $(X_i)_{i\in\FN}$ be a sequence of maximal $R_i$-nets.
  Define for each $c_{(i,j)}\in\choice{X}{\mathscr{C}}$ a sequence $\left(S_{(i,j),k}\right)_{k\in\FN}$
  as in Theorem \ref{countapp} as
  \[
  S_{(i,j),k}\ =\
  \begin{cases}
    \emptyset & \text{ if } \quad k<i\\
    R_k``\{c_{(i,j)}\} & \text{ otherwise}.
  \end{cases}
  \]
  Then, each sequence  $\left(S_{(i,j),k}\right)_{k\in\FN\setminus i}$ is monotone and generates $\mon{C}{c_{(i,j)}}$, so that $(|S_{(i,j),k}|)_{k\in\FN}$ approximates  $\mon{C}{c_{(i,j)}}$ by Theorem \ref{monotone-BGF}.

  Given $x,y\in \choice{X}{\mathscr{C}}$, let $(S_{x,k})_{k\in \FN}$ and $(S_{y,k})_{k\in \FN}$ denote their generating sequences.
  Then, there exists $k\in\FN$ which satisfies the following equation for all $\ell>k$,
  \[
  S_{x,\ell}\cap S_{y,\ell}\ =\ \emptyset.
  \]
  It implies that $\cup\cap|S_{x,\ell}\cap S_{y,\ell}|=0$, so that the following equation holds:
  \[
  \cup\cap |S_{x,\ell}\cup S_{y,\ell}| \ =\ |\mon{C}{x}|+|\mon{C}{y}|.
  \]

  Let $B$ be a subclass of $\choice{X}{\mathscr{C}}$ which satisfies $\cup_{i\in I}\cup_{j\in h(i)}\{c_{(i,j)}\}=B$ for some $I\subseteq \FN$ and $h(\cdot)\subseteq j(\cdot)$.
  Then, $\left(\cup_{\ell\in i}\cup_{j\in h(\ell)} S_{(i,j),k}\right)_{k\in\FN}$  generates $\fig{C}{\cup_{\ell\in i}\cup_{j\in h(\ell)} S_{(i,j),k}}$ for all $i\subseteq I$ by Lemma \ref{fingap}, and its cut approximates the figure too by Theorem \ref{monotone-BGF}.
  
  Now, define $F_{\mathscr{S}_X}(\fig{\mathscr{C}}{B})$ as
  \[
  F_{\mathscr{S}_X}\left(\fig{C}{B}\right)
  \ =\ \Biggl(\Bigl|\cup_{\ell\in I}\cup_{j\in h(\ell)} S_{(i,j),k}\Bigr|\Biggr)_{k\in\FN}.
  \]
  Then,  $F_{\mathscr{S}_X}$ is $\sigma$-consistent on $\mathscr{S}_X$.
\end{proof}

In general, a BAF $F_{\mathscr{S}_X}$ is said to be \textit{topological} iff it gives each monad $\mon{C}{c_{(i,j)}}$ a series $\left(|S_{(i,j),k}|\right)_{k\in\FN}$ where $S_{(i,j),k}=\emptyset$ if $k<i$ and, otherwise, $S_{(i,j),k}=R_k``\{c_{(i,j),k}\}$. 
Moreover, $F_{\mathscr{S}_X}$ is said to be \textit{almost uniform} on $\mathscr{C}$ iff there exists a finite set $b\subseteq \choice{X}{\mathscr{C}}$ which satisfy $|S_{x,k}|=n_k\in N$ for all $x\in \choice{X}{\mathscr{C}}\setminus b$ and $k\in\FN$, and also satisfy $p\leq\frac{|S_{x,k}|}{|S_{y,k}|}\leq q$ for all $x,y\in \choice{X}{\mathscr{C}}$ for some $p,q\in\FQ$.
If $b$ is empty, $(R_k)_{k\in\FN}$ is simply said to be \textit{uniform}.

\begin{thm}\label{coincidence}
  Let $F_{\mathscr{S}_X}$ be an almost uniform topological $\sigma$-consistent BAF on a topological $\sigma$-partition $\mathscr{S}_X$ of a continuum $\mathscr{C}$, and
  $F_{\choice{X}{\mathscr{C}}}$ be a canonical BAF on $\choice{X}{\mathscr{C}}$.
  Then, $\varliminf{\frac{F_{\choice{X}{\mathscr{C}}}({B})}{{F_{\choice{X}{\mathscr{C}}}}(\choice{X}{\mathscr{C}})}}\doteq
  \varliminf{\frac{F_{\mathscr{S}_X}(\fig{C}{B})}{F_{\mathscr{S}_X}(C)}}$ holds for all $B\subseteq \choice{X}{\mathscr{C}}$.
\end{thm}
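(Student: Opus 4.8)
The plan is to compute both BAFs coordinatewise, then move from the union appearing in the topological BAF to a plain sum of ball-sizes via $\sigma$-consistency, and finally collapse that sum by almost uniformity. First I would read off the two sides at each coordinate $k$. Since $F_{\choice{X}{\mathscr{C}}}(\{c_{(i,j)}\})=(1)_i$ contributes $1$ exactly when $k\geq i$, i.e.\ exactly when $c_{(i,j)}\in X_k$, the canonical BAF gives $F_{\choice{X}{\mathscr{C}}}(B)$ the $k$-th term $|B\cap X_k|$ and $F_{\choice{X}{\mathscr{C}}}(\choice{X}{\mathscr{C}})$ the $k$-th term $|X_k|$. On the other side, the topological BAF assigns $\fig{C}{B}$ the $k$-th term $\bigl|\bigcup_{c_{(i,j)}\in B,\,i\leq k}R_k``\{c_{(i,j)}\}\bigr|$, and since $\choice{X}{\mathscr{C}}$ is a maximal-net family with $R_k``X_k=C$, it assigns $C$ the constant value $|C|$. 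So the target equivalence reduces to
\[
\varliminf_k\frac{|B\cap X_k|}{|X_k|}\ \doteq\ \varliminf_k\frac{\bigl|R_k``(B\cap X_k)\bigr|}{|C|}.
\]

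Next I would invoke the hypothesis that $F_{\mathscr{S}_X}$ is $\sigma$-consistent on $\mathscr{S}_X$, applied both to the subfamily indexing $B$ and to the whole partition. Writing $\Sigma^B_k=\sum_{c\in B\cap X_k}|R_k``\{c\}|$ and $\Sigma_k=\sum_{c\in X_k}|R_k``\{c\}|$, $\sigma$-consistency yields
\[
\varliminf_k\frac{\bigl|R_k``(B\cap X_k)\bigr|}{|C|}\ \doteq\ \varliminf_k\frac{\Sigma^B_k}{|C|}\qquad\text{and}\qquad\varliminf_k\frac{\Sigma_k}{|C|}\ \doteq\ 1,
\]
the latter because $F_{\mathscr{S}_X}(C)$ is the constant $|C|$. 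Then almost uniformity enters. With $b$ the finite exceptional set and $n_k$ the common ball-size off $b$, I would write $\Sigma^B_k=|B\cap X_k|\,n_k+E^B_k$ and $\Sigma_k=|X_k|\,n_k+E_k$, where each error is bounded by a fixed finite multiple of $n_k$ (the at most $|b|$ deviating terms each lie between $p\,n_k$ and $q\,n_k$). Dividing $\Sigma^B_k$ by $\Sigma_k$ cancels $n_k$, and since $|X_k|$ grows without bound as $k$ ranges over $\FN$, the errors contribute at most $O(1/|X_k|)$ to the quotient; hence $\Sigma^B_k/\Sigma_k$ and $|B\cap X_k|/|X_k|$ agree up to an infinitesimal at each large $k$, so their limits inferior are $\doteq$-equivalent. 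Chaining this with the two displays above gives the claim.

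The delicate point is the denominator swap: the target carries $|C|$ while the clean cancellation lives over $\Sigma_k$. Replacing $|C|$ by $\Sigma_k$ is legitimate only because $\Sigma_k\geq|C|$ always (a sum dominates the union it majorises) and $\sigma$-consistency forces $\varliminf_k \Sigma_k/|C|\doteq1$; since $\doteq$ tolerates only infinitesimal discrepancies between finite cuts (its second clause, identifying all cuts $\supseteq\frac{\FN}{1}$, never applies to these ratios, which are at most $1$), I must upgrade the liminf statement to genuine asymptotic closeness of $\Sigma_k$ and $|C|$, i.e.\ show the total overlap excess $\Sigma_k-|C|$ is negligible and not merely negligible along a subsequence. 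This is exactly where almost uniformity and the maximal-net structure must be combined with $\sigma$-consistency, and I expect it to be the main obstacle; the remaining bookkeeping---that $\doteq$ passes through $\varliminf=\bigcup\bigcap$ of sequences of rational cuts, and that the degenerate case of finite $B$ (where both sides collapse to the same infinitesimal ratio) is covered by the same algebra---is routine by comparison.
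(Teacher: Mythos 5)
Your proposal is correct and follows essentially the same route as the paper: identify the canonical ratio coordinatewise as $|B\cap X_k|/|X_k|$, use almost uniformity to write the topological sums as (count)$\cdot n_k$ plus an error bounded by a fixed finite multiple of $n_k$ coming from the exceptional set $b$, cancel $n_k$, and let the unbounded growth of $|X_k|$ kill the error. The one place you go beyond the paper is the ``denominator swap'' you flag at the end: the paper simply writes the topological ratio with the sum $\sum_{i,j}|S_{(i,j),k}|$ in the denominator and identifies it with $F_{\mathscr{S}_X}(C)$ without comment, whereas you correctly observe that $\sigma$-consistency as defined only replaces the numerator and that passing from $\varliminf \Sigma_k/|C|\doteq 1$ to the full conclusion needs genuine asymptotic closeness --- a legitimate point of care that the paper's own proof does not address.
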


\begin{proof}
  Since $F_{\mathscr{S}_X}$ is topological and  almost uniform, there exists  $p,q\in\FQ$, where $p<1<q$, satisfy
  \[
  p\cdot n_k\ \leq |R_k``\{c_{(i,j),k}\}|\ \leq\ q\cdot n_k. 
  \]
  Moreover, since $F_{\mathscr{S}_X}$ is almost uniform, there exists finite subset $b\subseteq B$ which satisfies for some $\ell\in\FN$ that 
  \[
  \left(\sum_{i\in\ell\cap I} h(i)+(p-1)|b|\right)n_k
  \ \leq\ 
      {\sum_{i\in\ell\cap I}\sum_{j\in h(i)} |S_{(i,j),k}|}
  \ \leq\ 
  \left(\sum_{i\in\ell\cap I} h(i)+(q-1)|b|\right)n_k,
  \]
  where $I\subseteq \FN$ and $h(\cdot)\subseteq j(\cdot)$ satisfying $\cup_{i\in I}\cup_{j\in h(i)}\{c_{(i,j)}\}=B$.
  
  Since for each $r\in\FQ$ there exists $k>\ell$ which satisfies
  \[
  \frac{(1-p)|b|}{\sum_{i\in k} j(i)}<r\quad\text{ and }\quad\frac{(q-1)|b|}{\sum_{i\in k} j(i)}<r,
  \]
  the following inequations hold for all $r\in\FQ$
  \[
  \lim_{\ell\in\FN}\frac{\sum_{i\in\ell\cap I} h(i)}{\sum_{i\in\ell} j(i)}-r \leq
  \lim_{\ell\in\FN}\frac{\sum_{i\in\ell\cap I}\sum_{k\in h(i)} |S_{(i,j),k}|}{\sum_{i\in \ell}\sum_{k\in j(i)}|S_{(i,j),k}|}\ \leq\
  \lim_{\ell\in\FN}\frac{\sum_{i\in\ell\cap I} h(i)}{\sum_{i\in\ell} j(i)}+r.
  \]
  It implies that $\varliminf {\frac{F_{\mathscr{S}_X}(\fig{\mathscr{C}}{{B}})}{F_{\mathscr{S}_X}(C)}}\doteq
  \varliminf{\frac{F_{\choice{X}{\mathscr{C}}}(B)}{F_{\choice{X}{\mathscr{C}}}(\choice{X}{\mathscr{C})}}}$ holds.
\end{proof}
 
Now, to see how rational additive cuts work, let us see some examples.
Consider that we have a unit-length string and cut it into two pieces with equal length countably many times.
What are left behind are countably many pieces, say, $\FN$, of strings\footnote{It may seem that there must be $2^\FN$ pieces left, and it is actually correct that $2^\FN$ cosists of uncountably many elements.
However, $2^\FN$ contains only countably many finite sets, that is, $\FN$.
What it includes beside these sets are proper semisets.
}, each of which has ${\frac{1}{\FN}}$ length.

Let $\langle[0,1]_{2^\varepsilon},\doteq\rangle$ be a continuum representing a real interval $[0,1]$, where $[0,1]_{2^\varepsilon}$ is a rational grid defined as $\left\{\frac{n}{2^\varepsilon}\,|\, (0\leq n\leq 2^\varepsilon)\wedge(n\in N)\right\}$ for some $\varepsilon\in N\setminus \FN$, and $\doteq$ is an indiscernibility equivalence defined by a generating sequence  $R=(R_k)_{k\in\FN}$ consisting of $R_k=\{\langle a,b\rangle\in[0,1]_{2^\varepsilon}^2\,;\,|a-b|\leq 2^{-k}\}$\footnote{See Sakahara and Sato \cite{real} for details}.
Moreover, a rational number $\alpha\in[0,1]_{2^\varepsilon}$ is said to be a \textit{position}, while a real $\mon{}{\alpha}\in[0,1]$ containing a position $\alpha$, is said to be a \textit{point} on a continuum separately.

\begin{ex}\label{continua}
  Let $c_{(i,j)}$ denote a position in a rational grid $[0,1]_{2^\varepsilon}$ where $i\in\FN$, $j\in j(i)$ in which $j(0)=j(1)=1$ and  $j(i)=2^{i-2}$ for all $i\geq 2$, as
  \[
  c_{(i,j)}\ =\
  \begin{cases}
    i & \text{ if } i<2\\
    \frac{2j+1}{2^{i-1}} & \text{ otherwise.}
  \end{cases}
  \]
  Then, the class consisting of all finite rational points constitutes a choice class of $[0,1]$
  \[
  \choice{X}{[0,1]}=\{c_{(i,j)}\,;\, \left((i<2)\wedge(j=0)\right)\vee(\exists i\in\FN\setminus 2)(j\in 2^{i-2})\},
  \]
  and $\mathscr{S}_X=\{\mon{}{c}\,;\, c\in \choice{X}{[0,1]}\}$ turns to be a topological $\sigma$-partition of $[0,1]_{2^\varepsilon}$ since $\mon{}{c}\cap\mon{}{d}=\emptyset$ for all $c\not\doteq d$ and $[0,1]_{2^\varepsilon}=\cup_{c\in\choice{X}{[0,1]}}\mon{}{c}$.

  Let $F_{\mathscr{S}_X}$ be a topological BAF on $\mathscr{S}_X$ where $S_{(i,j),k}=R_k``\{c_{(i,j),k}\}$ for all $k\geq i$ and, otherwise, $S_{(i,j),k}=\emptyset$. 
  Then, the value for each element of $\mathscr{S}_X$ is given as $F_{\mathscr{S}_X}(\mon{}{c_{(i,j)}})\ =\ (|S_{(i,j),k}|)_{k\in\FN}$ where
  \[
  |S_{(i,j),k}|\ =\ 2^{\varepsilon-k+1-[i<2]\footnotemark}\cdot [k\geq i]
  \]
  \footnotetext{$[i<2]$ is an Iverson bracket meaning that it equals 1 if $i<2$ and 0 otherwise.}
  and the rational cut of each point is given as
  \[
  \lim {\frac{F_{\mathscr{S}_X}\bigl(\mon{}{\{c_{(i,j)}\}}\bigr)}{F_{\mathscr{S}_X}([0,1])}}
  \ =\  \lim{\frac{(2^{\varepsilon-k+1-[i<2]}\cdot [k\geq i])_{k\in\FN}}{(2^\varepsilon)}} 
  \ =\ {\frac{1}{\FN}}.
  \]
  The rational cut of the interval $[0,\ell]$, where $2^\varepsilon\cdot \ell\in 2^\varepsilon$, is also given as 
  \[
  \lim{\frac{\sum_{i\in\FN}\sum_{j\leq 2^i\cdot \ell}F_{\mathscr{S}_X}(\mon{}{\{c_{(i,j)}\}})}{F_{\mathscr{S}_X}([0,1])}}
  \ =\ \lim{\frac{\left((2m(k)-1)\cdot 2^{\varepsilon-k}\right)_{k\in\FN}}{(2^\varepsilon)}}
  \ \doteq
  \ {\ell},
  \]
  where $m(k)=\max_{j\in 2^{k-1}}\{j\,;\,\frac{j}{2^{k-1}}\leq \ell\}$.

  Since each element satisfies $|S_{(i,j),k}|=2^{\varepsilon-k+1}$ unless $i<2$, $F_{\mathscr{S}_X}$ is almost uniform, so that the same rational cuts can be drawn directly from the choice class $\choice{X}{[0,1]}$ by Theorem \ref{coincidence}.
  Let $F_{\choice{X}{[0,1]}}$ be a canonical BAF of $\choice{X}{[0,1]}$.
  Then, since its element equals to that of $F_{\mathscr{S}_X}$ divided by $2^{\varepsilon-k}$, the value of $\choice{X}{[0,1]}$ is given as
  \[
  F_{\choice{X}{[0,1]}}(\choice{X}{[0,1]})\ =\
  \sum_{c\in\choice{X}{[0,1]}}F_{\choice{X}{[0,1]}}(c)\ =\ (2^k)_{k\in\FN}. 
  \]
  Similarly, the rational cut of each point is given as
  \[
  \lim {\frac{F_{\choice{X}{[0,1]}}(c_{(i,j)})}{F_{\choice{X}{[0,1]}}([0,1])}}\ =\ \lim{\frac{([k\geq i])_{k\in\FN}}{(2^k)_{k\in\FN}}}\ =\ {\frac{1}{\FN}},
  \]
  and the rational cut of the interval $[0,\ell]$ as
  \[
  \lim{\frac{\sum_{i\in \FN}\sum_{j\leq 2^i\cdot \ell}F_{\choice{X}{[0,1]}}(c_{(i,j)})}{F_{\choice{X}{[0,1]}}([0,1])}}
  \ =\ \lim{\frac{(m(k))_{k\in\FN}}{(2^k)_{k\in\FN}}}
  \ \doteq
  \ {\ell}.
  \]
\end{ex}

\begin{ex}\label{anothercontinua}
  The $\sigma$-partition of real unit interval can be enumerated differently from Example \ref{continua}.
  To see an example, let us consider another generating sequence $(P_{i})_{i\in\FN}$  of $\doteq$ as follow:
  \[
  P_k\ =\ \left\{\langle a,b\rangle\in [0,1]_{2^\varepsilon}^2\,;\,
  \begin{pmatrix}
    \left(\left(\max\{a,b\}\leq\frac{3}{4}\right)\Rightarrow \left(\frac{2}{3}\cdot|b-a|\leq\frac{1}{2^{k}}\right)\right)\\
    \wedge\left(\left(\frac{3}{4}\leq \min\{a, b\}\right)\Rightarrow\left({2}\cdot|b-a|\leq\frac{1}{2^{k}}\right)\right)\\
    \wedge
    \begin{pmatrix}
    \left(\min\{a,b\}\leq\frac{3}{4}\leq \max\{a,b\}\right)\\
    \Rightarrow\left( 2\max\{a,b\}-1-\frac{2}{3}\min\{a,b\}\leq \frac{1}{2^{k}}\right)
    \end{pmatrix}\\
  \end{pmatrix}
  \right\}.
  \]
  It may seem complicated, however, $(P_i)_{i\in\FN}$ is simply connecting two generating sequences $(R_i)_{i\in\FN}$ and $(Q_i)_{i\in\FN}$ which are given as
  \[
  R_k\ =\ \left\{\begin{matrix}\langle a,b\rangle\in \left[0,\frac{3}{4}\right]_{2^\varepsilon}^2\,;\, \frac{2}{3}|b-a|\leq\frac{1}{2^{k}}
  \end{matrix}
  \right\}\ \text{ and }\ 
  Q_k\ =\ \left\{\begin{matrix}\langle a,b\rangle\in \left[\frac{3}{4},1\right]_{2^\varepsilon}^2\,;\, 2|b-a|\leq\frac{1}{2^{k}}\end{matrix}\right\}.
  \]
  It is easily confirmed that $(P_i)_{i\in\FN}$ is also a $\pi$-generating sequence\footnote{See Chapter 3 Section 1 of Vop\v{e}nka \cite{ast}}.
  To make sure, let us verify that $(P_i)_{i\in\FN}$ satisfies $\langle x,y\rangle,\langle y,z\rangle\in P_{k+1}$ then $\langle x,z\rangle\in P_k$.
  One of the nontrivial cases is $x\leq y\leq \frac{3}{4}\leq z$.
  Since $\langle x,y\rangle\in P_{k+1}$, the inequation $\frac{2}{3}(y-x)\leq\frac{1}{2^{k+1}}$ holds.
  The inequation $2z-1-\frac{2}{3}y\leq\frac{1}{2^{k+1}}$ is also satisfied since $\langle y,z\rangle\in P_{k+1}$.
  Adding these two inequation, the following inequation is met
  \[
  2z-1-\frac{2}{3}x\ \leq\ \frac{1}{2^{k+1}}+\frac{1}{2^{k+1}}\ =\ \frac{1}{2^{k}}.
  \]
  It implies that $\langle x,z\rangle\in P_{k}$.

  Another choice class $\choice{Y}{[0,1]}=\{{c_{(i,j)}}\,;\,(\exists i\in\FN)(j\in j(i))\}$ according to a sequence $(Y_k)_{k\in\FN}$ of maximal $P_k$-nets where $j(0)=j(1)=1$ and $j(i)=2^{i-2}$ is given as $c_{(i,0)}=i$ if $i<2$, otherwise,
  \[
  c_{(i,j)}= \begin{cases}
    \frac{3}{2}\cdot\frac{2j+1}{2^{i-1}} & \text{ if } j< 2^{i-3}\\
    \frac{1}{2}+\frac{1}{2}\cdot\frac{2j+1}{2^{i-1}} & \text{ if } j\geq 2^{i-3}
  \end{cases}
  \]
  and the topological $\sigma$-partition is given as $\mathscr{S}_Y=\{\mon{}{x}\,;\,x\in\choice{Y}{[0,1]}\}$.
  
  Let $F_{\mathscr{S}_Y}$ be a topological BAF where $S_{(i,j),k}=\emptyset$ if $k<i$ and, otherwise, $S_{(i,j),k}=P_k``\{c_{(i,j),k}\}$.
  Then, the length of the interval, say, $\left[0,\frac{3}{4}\right]$ is given as same as that of Example  \ref{continua} as
  \[
  \lim{\frac{F_{\mathscr{S}_Y}([0,\frac{3}{4}])}{F_{\mathscr{S}_Y}([0,1])}}
  \ =\ \lim{\frac{(\frac{3}{4}+\frac{1}{4}[k=0]+\frac{1}{2^{k+1}}[k\geq 2])_{k\in\FN}}{(1)}}
  \ \doteq
  \ {\frac{3}{4}}
  \]
  However, that of canonical BAF $\lim{\frac{F_{\choice{Y}{[0,1]}}([0,\frac{3}{4}])}{F_{\choice{Y}{[0,1]}}([0,1])}}={\frac{1}{2}}$.
\end{ex}

As one can see, the monads considered in two exmaples correspond to the Vitali set in ZFC settings.
It is well-known that the Vitali set is unmeasurable, however, it is possible to evaluate its ratio in AST settings.
In fact, its measure can also be enumerated as we will see in the next section.

\section{Kalina Measures}\label{kalina}

A \textit{Kalina measure}\footnote{Notice that the definition of Kalina measure is slightly different from that of Kalina \cite{Kalina1989-a} in which it is defined as
\[
m_{s,\,F}(B)\ =\ r\quad\text{iff}\quad (\exists \delta\in N\setminus\FN)(\delta<\alpha)(\forall \eta\in \delta\setminus\FN)\left(\frac{b_\eta}{s_\eta}\in r\right),
\]
where approximating sequences are defined to be consisting of $\delta\in N\setminus\FN$ elements, so that $\frac{b_\eta}{s_\eta}$ exists.
It is easy to verify that, when $(a_i)_{i\in\FN}$ converges to $x$, there exists a prolonged sequence $(a_i)_{i\in\delta}$ which satisfies $a_\eta\in\mon{}{x}$ for all $\eta\in\delta\setminus\FN$ by the axiom of prolongation.
} $m_{s,\,F}\ ;\  \mathscr{O}(s,F)\rightarrow R\cup\{\infty\}$ is defined as
\[
m_{s,\,F}(B)\ =\ \lim_{i\in\FN}\mon{}{\textstyle \frac{b_i}{s_i}}.
\]
where $s=(s_i)_{i\in\FN}$ is an approximating sequence of $S$ and $F(B)=(b_i)_{i\in\FN}$.
A real sequence $(\mon{}{a_i})_{i\in \FN}$ is said to \textit{converge\footnote{
  See Sakahara and Sato \cite{topology} for a general statement.} to} $\mon{}{x}$, denoted as $\lim_{i\in\FN} \mon{}{a_i}= \mon{}{x}$, iff
\[
\left(
\forall q\in\FQ
\right)
\left(
\exists i\in\FN
\right)
\left(
\forall j>i
\right)
\begin{pmatrix}
\left(
| a_j - x|< |q|
\right)\\
\vee
\left(
a_j>q\wedge x>q
\right)\\
\vee
\left(
a_j<q\wedge x<q
\right)
\end{pmatrix}
.
\]
When $F_{\mathscr{S}}$ is $\sigma$-consistent on $\mathscr{S}$, $m_{F_\mathscr{S}(S),F_\mathscr{S}}(B)$ is abbreviated as $m_{F_\mathscr{S}}(B)$, and $\langle F(S),F\rangle$-observable class is simply called $F_\mathscr{S}$-observable.

It is easy to confirm that a Kalina measure of any given Borel semiset $B$ corresponds in a certain manner to the rational cuts given by $B$.
The next proposition shows how.
\begin{prop}
  A Kalina measure $m_{s,F}(B)$ satisfies the following equation for each $B\in\mathscr{B}_s$
  \[
  m_{s,F}(B)\ =\
  \begin{cases}
    \mon{}{q} & \text{ if }\ \lim{\frac{F(B)}{s}}\ \doteq
    \ {q}\\
    \mon{}{0} & \text{ if }\ \varlimsup{\frac{F(B)}{s}}\ \subseteq\ {\frac{1}{\FN}}\\
    \infty & \text{ if }\ \varliminf{\frac{F(B)}{s}}\ \supseteq\ {\frac{\FN}{1}}\\
  \end{cases}
  \]
\end{prop}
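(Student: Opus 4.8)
The plan is to unwind both definitions and reduce everything to the elementary behaviour of the sequence of rationals $r_i=\frac{b_i}{s_i}$, where each quotient of cuts of naturals is identified with a genuine rational (equivalently, a cut of a rational) so that $\mon{}{r_i}$ is a well-defined real and $m_{s,F}(B)=\lim_{i\in\FN}\mon{}{r_i}$. First I would record the dictionary supplied by Section~\ref{cuts},
\[
\varliminf\frac{F(B)}{s}=\bigcup_{i\in\FN}\bigcap_{j\geq i}\frac{b_j}{s_j},\qquad \varlimsup\frac{F(B)}{s}=\bigcap_{i\in\FN}\bigcup_{j\geq i}\frac{b_j}{s_j},
\]
and note that a rational $x\in Q^+$ lies in the left cut exactly when $x<r_j$ for all large $j$, and in the right cut exactly when $x<r_j$ for infinitely many $j$. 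Thus these two cuts act as the cut-theoretic liminf and limsup of $(r_i)$, and this correspondence is the only bridge needed between the cut arithmetic of the earlier sections and the real-valued convergence defining $m_{s,F}$.

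For the first case I assume the limit cut $\lim\frac{F(B)}{s}$ exists and satisfies $\lim\frac{F(B)}{s}\doteq\isrc{q}$ with $q$ bounded. Since the intersection $\lim\frac{F(B)}{s}\cap\isrc{q}$ is bounded, it cannot contain $\frac{\FN}{1}$, so the second disjunct in the definition of $\doteq$ on Borel cuts fails and the first must hold: the symmetric difference $\bigl(\lim\frac{F(B)}{s}\dmns\isrc{q}\bigr)\cup\bigl(\isrc{q}\dmns\lim\frac{F(B)}{s}\bigr)\subseteq\frac{1}{\FN}$. Reading this through the liminf/limsup description, I would extract that for every $k\in\FN$ there is an $i$ with $|r_j-q|<\frac{1}{k}$ for all $j>i$; that is, $r_j$ is within every finite precision of $q$ eventually. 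Feeding this into the convergence clause preceding the statement, with target $x=q$ and threshold $\frac{1}{k}$ (the first disjunct), yields $\lim_{i\in\FN}\mon{}{r_i}=\mon{}{q}$, hence $m_{s,F}(B)=\mon{}{q}$.

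The remaining two cases are the degenerate ones in which the limit cut is additive and is therefore not the cut of any single rational. If $\varlimsup\frac{F(B)}{s}\subseteq\frac{1}{\FN}$, then $\varlimsup\frac{F(B)}{s}\subseteq\isrc{\frac{1}{k}}$ for each $k$, and reading this through the limsup description forces $r_j<\frac{1}{k}$ for all large $j$; since each $r_j$ is non-negative this is exactly the convergence clause for $x=0$ (the first disjunct for small positive thresholds, the second disjunct being automatic for non-positive ones), so $m_{s,F}(B)=\mon{}{0}$. Dually, if $\varliminf\frac{F(B)}{s}\supseteq\frac{\FN}{1}$, then $\isrc{k}\subseteq\varliminf\frac{F(B)}{s}$ for every $k$, giving $r_j>k$ for all large $j$; this is the convergence clause with target $\infty$, validated through the disjunct $a_j>q\wedge x>q$, so $m_{s,F}(B)=\infty$.

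I expect the only delicate point to be the first case. There one must justify carefully that the Borel-cut equivalence $\doteq$ collapses to an honest $\frac{1}{\FN}$-bound on the symmetric difference (using that $q$ is bounded, so no intersection with $\isrc{q}$ can contain $\frac{\FN}{1}$), and then convert that single inclusion of cuts into the two-sided, ``for every finite precision, eventually'' statement demanded by the monad-convergence definition. The other two cases are routine once the liminf/limsup dictionary is in place; they mainly require checking that the sign conditions $a_j>q$ and $a_j<q$ in the convergence definition hold automatically because every $r_i=\frac{b_i}{s_i}$ is non-negative.
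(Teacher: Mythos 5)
Your proposal is correct and follows essentially the same route as the paper's own proof: unwind the cut-theoretic $\varliminf$/$\varlimsup$ into eventual bounds on the rational sequence $\frac{b_i}{s_i}$ and feed those into the monad-convergence definition, case by case. Your treatment is somewhat more careful than the paper's (in particular the explicit reduction of the Borel-cut equivalence $\doteq$ to the $\frac{1}{\FN}$-bound on the symmetric difference, and the check of the sign disjuncts in the convergence clause), but the underlying argument is the same.
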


\begin{proof}
  Let us start with the case where $\lim{\frac{F(B)}{s}}\ \doteq
  \ {q}$ holds.
  It implies that the inequation $\lim |\frac{b_i}{s_i}-q|\leq |a|$ holds for all $a\in\FQ$,
  thus, there exists $i\in\FN$ which satisfies $|\frac{b_j}{s_j}-q|\leq |a|$ for each $j>i$ and $a\in\FQ$,
  so that  $\lim \mon{}{\frac{b_i}{s_i}}=\mon{}{q}$ holds.

  Secondly, consider the case $\varlimsup{\frac{F(B)}{s}}\ \subseteq\ {\frac{1}{\FN}}$ holds.
  It implies that for each $a\in\FQ$, there exists $i\in\FN$ which satisfies ${0}\subseteq {\frac{b_j}{s_j}}\subseteq {|a|} $ for all $j>i$, so that $0\leq \frac{b_j}{s_j}\leq |a|$ holds for all $j>i$ and thus, $\lim\mon{}{\frac{b_i}{s_i}}=\mon{}{0}$.

  Lastly, consider the case $\varliminf{\frac{F(B)}{s}}\ \supseteq\ {\frac{\FN}{1}}$ holds.
  It implies that for each $a\in\FQ$, there exists $j>i$ which satisfies ${|a|} \subseteq {\frac{b_j}{s_j}}$ for all $i\in\FN$, so that $|a|\leq \frac{b_j}{s_j}$ holds for some $j>i$ and thus, $\lim\mon{}{\frac{b_i}{s_i}}=\infty$ follows.
\end{proof}

Let $s=(s_i)_{i\in\FN}$ be any approximating sequence of a Borel semiset having nonadditive cut, then, $\langle s,F\rangle$-observable class coincides with $\mathscr{B}_s$ by Theorem \ref{uniqueness}.
It implies the next theorem which corresponds to Theorem 3.7 of Kalina \cite{Kalina1989-a}.
\begin{thm}
Let $F$ be a BAF and $(s_i)_{i\in\FN}$ approximates a Borel semiset having nonadditive cut.
Then $\mathscr{O}(s,F)=\mathscr{B}_s$ and $m_{s,F}$ is nondecreasing and nonnegative measure.
\end{thm}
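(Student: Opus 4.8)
The plan is to dispatch the three assertions in turn, with the observability claim and nonnegativity essentially free and the nondecreasing property carrying the real work. First I would note that $\mathscr{O}(s,F)=\mathscr{B}_s$ is immediate from Theorem~\ref{uniqueness}: since $S$ has a nonadditive cut by hypothesis, that theorem states precisely that every $B\in\mathscr{B}_s$ is $\langle s,F\rangle$-observable, so the limit $\lim \mon{}{\frac{b_i}{s_i}}$ exists and $m_{s,F}$ is defined on all of $\mathscr{B}_s$. At the same time I would record the further consequence of Theorem~\ref{uniqueness} that $\varliminf\frac{F(B)}{(s_i)}$ is independent of the chosen BAF up to $\doteq$, so $m_{s,F}(B)$ may be evaluated with whichever approximating sequence of $B$ is convenient. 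Nonnegativity is then routine: each $b_i$ and $s_i$ is a cut of naturals, whence $\frac{b_i}{s_i}\ge 0$ and $\lim\mon{}{\frac{b_i}{s_i}}$ is a nonnegative element of $R\cup\{\infty\}$.

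For the nondecreasing property, suppose $B\subseteq C$, so that $\underline{B}\le\underline{C}$ and $\overline{B}\le\overline{C}$ by monotonicity of cuts under inclusion. I would argue through the three regimes distinguished by the Proposition immediately preceding this theorem, according to whether the ratio is a finite rational cut, is absorbed into $\frac{1}{\FN}$, or dominates $\frac{\FN}{1}$. When both $B$ and $C$ have nonadditive cuts, Theorem~\ref{uniqueRC} (via Proposition~\ref{limit}) shows the measure depends only on the lower cut up to $\doteq$, associating to each semiset a representative $d\in N$ with measure $\doteq\mon{}{d/|S|}$; since $\underline{B}\le\underline{C}$ the representatives may be taken with $d_B\le d_C$, giving the inequality. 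When $m_{s,F}(C)=\mon{}{0}$, the first inclusion of Theorem~\ref{uniqueness} forces $|C|\subset\cup\cap s_i$, whence $|B|\subseteq|C|\subset\cup\cap s_i$ and $m_{s,F}(B)=\mon{}{0}$; symmetrically, when $m_{s,F}(B)=\infty$, the second inclusion gives $|B|\supset\cap\cup s_i$, so $|C|\supseteq|B|\supset\cap\cup s_i$ and $m_{s,F}(C)=\infty$.

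Finally, if the word \emph{measure} is to carry finite additivity, I would derive it from Proposition~\ref{KZ315}(c): for separable $B,C$ one has $\underline{B\cup C}=\underline{B}\dpls\underline{C}$, and choosing compatible approximating sequences whose generating sets are eventually disjoint yields $c_i=b_i\dpls d_i$, hence $\frac{c_i}{s_i}=\frac{b_i}{s_i}+\frac{d_i}{s_i}$ and additivity of the limits. The main obstacle I anticipate is the bookkeeping at the boundaries between the three regimes of the preceding Proposition—guaranteeing monotonicity when, say, $B$ has a large additive cut while $C$ has a nonadditive one—which is exactly where the inclusions of Theorem~\ref{uniqueness} must be combined carefully with the convergence conventions for $\mon{}{0}$ and $\infty$. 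The nonadditive-to-nonadditive case, by contrast, is entirely controlled by Theorem~\ref{uniqueRC}, and the BAF-invariance noted above ensures that these comparisons, made through convenient approximating sequences, indeed compute the values of the fixed $m_{s,F}$.
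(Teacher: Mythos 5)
Your overall route matches the paper's, which in fact offers no written proof at all: the theorem is presented as an immediate consequence of Theorem~\ref{uniqueness} (``It implies the next theorem\ldots''), so your derivation of $\mathscr{O}(s,F)=\mathscr{B}_s$ and of BAF-invariance from that theorem is exactly the intended argument, and your treatment of nonnegativity and monotonicity supplies details the paper leaves tacit. One step of your monotonicity argument is stated backwards, though: from $m_{s,F}(C)=\mon{}{0}$ you infer $|C|\subset\cup\cap s_i$ via ``the first inclusion of Theorem~\ref{uniqueness},'' but that theorem gives the implication in the other direction, and the converse is false --- a semiset $C$ with a \emph{nonadditive} cut much smaller than $|S|$ also has measure $\mon{}{0}$ without satisfying the hypothesis of clause i.). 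The fix is to organize the cases by additivity of $\underline{B}$ and $\underline{C}$ rather than by the value of the measure: when both are nonadditive, Proposition~\ref{limit} and Corollary~\ref{KZ33} give representatives $d_B\leq d_C$ and hence $\mon{}{d_B/d_S}\leq\mon{}{d_C/d_S}$; when $|B|$ is additive, clause i.) or ii.) of Theorem~\ref{uniqueness} applies directly (and in the ii.) case additivity of $|B|$ forces $c_i/s_i>k$ eventually for every $k$, whatever the nature of $|C|$); when only $|C|$ is additive the two clauses for $C$ settle the comparison trivially. With that reorganization your argument goes through, and the closing remark on finite additivity via Proposition~\ref{KZ315}(c) is a reasonable, if optional, supplement since the paper never specifies which additivity the word ``measure'' is meant to carry here.
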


Notice that it is in stark difference with the original setting of Kalina \cite{Kalina1989-a} that $\sigma$-additivity of $F$ does not hold in our setting.
This disagreement stems from the way measures are defined.
Firstly, approximating sequences are defined in our setting as countable sequences, which enable us to capture characteristic rational cuts such as ${\frac{1}{\FN}}$, while Kalina \cite{Kalina1989-a} defined them as uncountable ones.
Secondly, prior to evaluating measures, rational cuts of classes are given in our setting, while Kalina \cite{Kalina1989-a} assigned real numbers directly to individual classes so that the real evaluation of ${\frac{1}{\FN}}$ cannot be differentiated from that of ${0}$.

Notice also that the Kalina measure of a lower cut $\underline{X}$ and an upper cut $\overline{X}$ of any given Borel semiset $X\in\mathscr{B}_s$ coincide if $s$ approximates nonadditive cut.
The next theorem, which follows directly by Theorem \ref{uniqueRC}, states this property.

\begin{thm}
  Let $B,C\in\mathscr{B}_s$ be such that $\underline{B}\doteq
  \underline{C}$. 
Then $m_{s,\,F}(B)=m_{s,\,F}(C)$.
\end{thm}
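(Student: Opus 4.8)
The plan is to obtain the result as an immediate consequence of Theorem~\ref{uniqueRC}, read through the definition of the Kalina measure and the characterization in the preceding Proposition. First I would record that both measures are defined: since $S$ has a nonadditive cut, Theorem~\ref{uniqueness} gives $\mathscr{O}(s,F)=\mathscr{B}_s$, so $B,C\in\mathscr{O}(s,F)$, and for each of them the limit inferior and the limit superior of the ratio to $S$ are $\doteq$-equivalent. Writing $F(B)=(b_i)_{i\in\FN}$ and $F(C)=(c_i)_{i\in\FN}$, this equivalence forces the real sequences $\mon{}{\frac{b_i}{s_i}}$ and $\mon{}{\frac{c_i}{s_i}}$ to converge, so that $m_{s,F}(B)$ and $m_{s,F}(C)$ both exist.

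Next I would apply Theorem~\ref{uniqueRC} directly to the hypothesis $\underline{B}\doteq\underline{C}$, obtaining the equivalence of rational cuts $\varliminf{\frac{F(B)}{(s_i)}}\doteq\varliminf{\frac{F(C)}{(s_i)}}$. Because $B$ and $C$ are observable, for each one the limit inferior is $\doteq$-equivalent to the limit superior, so this single equivalence pins down the entire ratio behaviour of both semisets relative to $S$ up to $\doteq$. The remaining task is to transport this equivalence of rational cuts through the real-number limit $\lim_{i}\mon{}{\cdot}$ that defines the measure, and this is exactly what the preceding Proposition supplies: it expresses $m_{s,F}(B)$ as $\mon{}{q}$, $\mon{}{0}$, or $\infty$ according to whether $\varliminf{\frac{F(B)}{s}}$ is $\doteq$ a finite rational $q$, is contained in $\frac{1}{\FN}$, or contains $\frac{\FN}{1}$.

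The crux, and the one point that genuinely needs verification, is that these three cases are invariant under $\doteq$ on $\mathscr{B}_Q$; I would check each branch of the definition of $\doteq$ on Borel cuts. In the finite case $\varliminf{\frac{F(B)}{s}}\doteq\isrc{q}$ with $q\in\FQ$, transitivity of $\doteq$ gives $\varliminf{\frac{F(C)}{s}}\doteq\isrc{q}$, and since $\isrc{q}\doteq\isrc{p}$ iff $q\doteq p$ iff $\mon{}{q}=\mon{}{p}$, both measures equal $\mon{}{q}$. The zero case follows once one notes $\frac{1}{\FN}\doteq\isrc{0}$: if $\varliminf{\frac{F(B)}{s}}\subseteq\frac{1}{\FN}$ then the second disjunct of $\doteq$ (which would require containing $\frac{\FN}{1}$) is impossible, so the first disjunct forces $\varliminf{\frac{F(C)}{s}}\subseteq\frac{1}{\FN}\dpls\frac{1}{\FN}=\frac{1}{\FN}$ by additivity, giving $m_{s,F}(C)=\mon{}{0}$; the infinite case is symmetric, with the containment $\supseteq\frac{\FN}{1}$ propagating through either disjunct to give $m_{s,F}(C)=\infty$. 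I expect no real difficulty beyond this bookkeeping, since all the analytic content---turning $\underline{B}\doteq\underline{C}$ into $\doteq$-equivalence of the ratios---is already carried by Theorem~\ref{uniqueRC}; the only care required is matching the two disjuncts of $\doteq$ to the correct boundary case of the Proposition at $\mon{}{0}$ and at $\infty$.
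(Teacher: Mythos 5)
Your proposal is correct and follows the same route as the paper, which presents this theorem as a direct consequence of Theorem~\ref{uniqueRC} combined with the definition of the Kalina measure via the preceding Proposition. The case analysis you carry out to transport the $\doteq$-equivalence of the ratio cuts through the three branches ($\mon{}{q}$, $\mon{}{0}$, $\infty$) is exactly the bookkeeping the paper leaves implicit.
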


On the contrary, when $s=(s_i)_{i\in\FN}$ is an approximating sequence of that having additive cut, then every Borel semiset is not always $\langle s,F\rangle$-observable.
The next theorem states this, which is a direct consequence of Theorem \ref{upper}.
\begin{thm}
Let $B\in\mathscr{B}_s$, $|B|=\cup\cap (s_i)_{i\in\FN}$ be an additive cut.
Then
\begin{enumerate}[i{.)}]
\item for each real $0\leq r\leq \infty$ there exists a BAF $F$ such that $B\in\mathscr{O}(s,F)$ and $m_{s,F}(B)=r$
\item there exists a BAF $G$ such that $B\notin\mathscr{O}(s,G)$.
\end{enumerate}
\end{thm}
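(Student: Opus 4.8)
The plan is to obtain the statement directly from Theorem~\ref{upper} by rewriting its conclusions about rational cuts as conclusions about the Kalina measure $m_{s,F}$, using the Proposition above—which reads off $m_{s,F}(B)$ from $\varliminf$ and $\varlimsup$ of $\frac{F(B)}{s}$—together with the definition of $\langle s,F\rangle$-observability. Two elementary facts will do the bookkeeping: first, $\varliminf\frac{b_i}{s_i}\subseteq\varlimsup\frac{b_i}{s_i}$ always, so it suffices to control one bound and sandwich the other; second, the two clauses of $\doteq$ on Borel cuts mean that equivalence is automatic either when both bounds sit inside $\frac{1}{\FN}$ or when their intersection contains $\frac{\FN}{1}$.

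For part~i.) I would argue by cases on $r$. If $0<r<\infty$, write $r=\mon{}{q}$ with $q\in\FQ$ positive; the first assertion of Theorem~\ref{upper} i.) yields a BAF $F$ whose ratio $\frac{F(B)}{s}$ has limit the rational cut $\isrc{q}$, so $\varliminf\doteq\varlimsup\doteq\isrc{q}$, giving $B\in\mathscr{O}(s,F)$ and, by the first line of the Proposition, $m_{s,F}(B)=\mon{}{q}=r$. If $r=\infty$, take the sequence $(\overline{b_i})=(i\cdot s_i)$ from Theorem~\ref{upper} i.), for which $\varliminf\frac{\overline{b_i}}{s_i}=\frac{\FN}{1}$; the third line of the Proposition already gives $m_{s,F}(B)=\infty$, and observability is free because $\varliminf\cap\varlimsup\supseteq\varliminf=\frac{\FN}{1}$ triggers the second clause of $\doteq$. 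If $r=0$, take the sequence $(\underline{b_i})$ with $\underline{b_i}=\lfloor s_i/k\rfloor$ from the same theorem, which gives $\varliminf\frac{\underline{b_i}}{s_i}=\frac{1}{\FN}$; here I must also establish $\varlimsup\frac{\underline{b_i}}{s_i}\subseteq\frac{1}{\FN}$, and then the second line of the Proposition yields $m_{s,F}(B)=\mon{}{0}=0$ while the first clause of $\doteq$ (both bounds inside $\frac{1}{\FN}$) gives observability.

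Part~ii.) is immediate: Theorem~\ref{upper} ii.) already furnishes a BAF $G$ with $\varliminf\frac{G(B)}{s}\not\doteq\varlimsup\frac{G(B)}{s}$, which is precisely the failure of the defining condition of $\langle s,G\rangle$-observability, so $B\notin\mathscr{O}(s,G)$.

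The one place requiring more than a citation is the $r=0$ case, where the Proposition needs the upper bound $\varlimsup\frac{\underline{b_i}}{s_i}\subseteq\frac{1}{\FN}$ whereas Theorem~\ref{upper} records only the lower value $\frac{1}{\FN}$. I expect this to be the main obstacle: one must check that the staircase construction $\underline{b_i}=\lfloor s_i/k\rfloor$, in which $k$ increases without bound as $i$ grows, forces the ratio eventually below every fixed $\frac{1}{m}$, so that the limit superior cannot climb back above $\frac{1}{\FN}$. This is exactly where the monotone, unbounded growth of the index $k$ (and, symmetrically, of the factor $i$ in the $r=\infty$ case, there already absorbed by the second clause of $\doteq$) does the real work, and it is the only step I would verify directly rather than quote.
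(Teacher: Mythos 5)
Your proposal is correct and takes essentially the same route as the paper, which supplies no separate argument at all but simply declares the theorem ``a direct consequence of Theorem \ref{upper}''; your write-up is exactly the bookkeeping that citation leaves implicit, and you correctly isolate the one step needing verification, namely that the staircase construction forces $\varlimsup\frac{\underline{b_i}}{s_i}\subseteq\frac{1}{\FN}$ rather than merely $\varliminf=\frac{1}{\FN}$. The only caveat (shared with the paper) is that Theorem \ref{upper} is stated for positive $r\in\FQ$ while the present theorem ranges over all reals $0\leq r\leq\infty$, so for a real $r=\mon{}{q}$ whose monad contains no finite rational one should note that the construction $b_i=\lfloor q\cdot s_i\rfloor$ goes through verbatim for any positive non-infinitesimal $q\in\BQ$.
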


Moreover, when $S$ has an additive cut, the fundamental feature of Kalina measures emerges.
A BAF $F_\mathscr{S}$ is said to be \textit{$\sigma$-superadditive} when there exists a $\sigma$-partition $\mathscr{T}=\{S_{(i,j)}\subseteq T\ ;(i\in\FN)(j\in j(i))\}$ of a subclass $T$ of $S$ which satisfies $m_{\mathscr{S}}(S_{(i,j)})=\mon{}{0}$ for all $i\in\FN$ and $j\in j(i)$, and $m_{\mathscr{S}}(\cup_{i\in \FN}\cup_{j\in j(i)} S_{(i,j)})=\mon{}{r}\in(0,1]$.

\begin{thm}\label{sigma-con-BAF}
  Let $\mathscr{C}=\langle C,\doteq_\mathscr{C}\rangle$ be a continuum and $F_\mathscr{S}$ be an almost uniform topological BAF.
  Then, $F_\mathscr{S}$ is $\sigma$-superadditive.
\end{thm}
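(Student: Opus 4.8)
The plan is to let the topological $\sigma$-partition $\mathscr{S}_X=\{\mon{C}{c_{(i,j)}}\ ;\ (i\in\FN)(j\in j(i))\}$ underlying $F_\mathscr{S}$ itself serve as the witnessing partition $\mathscr{T}$, taking $T=S=C$. By Theorem \ref{sigma-par-cont} this class of all monads is indeed a $\sigma$-partition of $C$, and since $F_\mathscr{S}$ is topological it is $\sigma$-consistent on $\mathscr{S}_X$ by Theorem \ref{sigmacnt}, so $m_\mathscr{S}$ is well defined on its members. It then suffices to verify the two defining clauses of $\sigma$-superadditivity for this partition: that each single monad has measure $\mon{}{0}$, while the union of all of them carries measure $\mon{}{r}$ with $r\in(0,1]$.

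First I would show $m_\mathscr{S}(\mon{C}{c_{(i,j)}})=\mon{}{0}$ for every representative. Writing $S_{(i,j),k}=R_k``\{c_{(i,j)}\}$ for $k\geq i$ and $\emptyset$ otherwise, almost uniformity furnishes a finite exceptional set and rationals $p,q\in\FQ$ for which the ratio bound $p\leq |S_{x,k}|/|S_{y,k}|\leq q$ forces every active monad, exceptional or not, to satisfy $p\cdot n_k\leq |S_{(i,j),k}|\leq q\cdot n_k$. Letting $N_k=\sum_{i\leq k}j(i)$ count the monads active at stage $k$, the reference total $F_\mathscr{S}(C)_k$ is the sum of these $N_k$ sizes, hence lies in $[\,p\,N_k\,n_k,\ q\,N_k\,n_k\,]$, and a single monad is squeezed by
\[
\frac{p}{q\,N_k}\ \leq\ \frac{|S_{(i,j),k}|}{F_\mathscr{S}(C)_k}\ \leq\ \frac{q}{p\,N_k}.
\]
Because a continuum carries infinitely many monads, $N_k$ grows without bound over $\FN$, so $\varlimsup_k \frac{F_\mathscr{S}(\mon{C}{c_{(i,j)}})}{F_\mathscr{S}(C)}\subseteq\frac{1}{\FN}$ and therefore $m_\mathscr{S}(\mon{C}{c_{(i,j)}})=\mon{}{0}$.

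Second, since the monads partition $C$, their union $\cup_{i}\cup_{j}\mon{C}{c_{(i,j)}}$ is $C=T$ itself, whence the ratio $F_\mathscr{S}(C)_k/F_\mathscr{S}(C)_k$ is identically $1$ and $m_\mathscr{S}(\cup_{i}\cup_{j}\mon{C}{c_{(i,j)}})=\mon{}{1}\in(0,1]$. The two clauses together exhibit the required partition, proving that $F_\mathscr{S}$ is $\sigma$-superadditive. I expect the main obstacle to be the first step: one must check that almost uniformity controls the finitely many exceptional monads as well as the generic ones, and, above all, that the reference total genuinely behaves like $N_k\,n_k$ with $N_k$ unbounded. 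This is exactly where the hypothesis that $\mathscr{C}$ is a continuum (so that its maximal $R_k$-nets keep acquiring new points) is indispensable, since for a merely finite or bounded family of monads each ratio would fail to vanish and the phenomenon would collapse.
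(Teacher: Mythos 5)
Your proposal is correct and follows essentially the same route as the paper's own proof: both take the topological $\sigma$-partition itself as the witnessing partition, use almost uniformity to squeeze each monad's stage-$k$ size against $q\cdot n_k$ so that the single-monad ratio is dominated by a quantity of order $1/\sum_{i\leq k}j(i)\to 0$, and observe that the union is all of $C$ so its measure is $\mon{}{1}$. Your version is in fact slightly more careful than the paper's in bounding the denominator from below via the two-sided almost-uniformity estimate, but the argument is the same.
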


\begin{proof}
  Let $\mathscr{S}=\{S_{(i,j)}\ ;\ (i\in\FN)(j\in j(i))\}$ be an almost uniform topological $\sigma$-partition $\mathscr{C}$ and 
  $F_\mathscr{S}$ be a topological BAF given as
  \[
  F_\mathscr{S}(\cup_{i\in I}\cup_{j\in j(i)}S_{(i,j)})\ =\ \sum_{i\in I}\sum_{j\in j(i)}\left(| S(i)_{(i,j),k}|\right)_{k\in\FN}.
  \]
  Since $F_\mathscr{S}$ is almost uniform, there exists a finite set $b\subseteq \choice{R}{\mathscr{C}}$ which satisfies $|R_k``\{x_{(i,j),k}\}|=n_k$ for all $x_{(i,j),k}\in b$, and the following inequations hold
  \[
  |R_k``\{x_{(i,j),k}\}|\ \leq\ q\cdot n_k\quad\text{ where } \ \langle x_{(i,j),k},c_{(i,j)}\rangle\in R_k,
  \]
  and, thus,  $|S_{(i,j),k}|\leq q\cdot n_k$ for some $q\in\FQ$ for all $i\in\FN$.
  Consequently, the following inequation is met
  \[
  m_\mathscr{S}(S_{(i,j)})
  \ \leq\ \lim_{i\in\FN}\mon{}{\frac{q\cdot n_k}{\sum_{k\in i}j(k)\cdot n_k}}
  \ =\ \lim_{i\in\FN}\mon{}{\frac{q}{\sum_{k\in i}j(k)}}\ =\ \mon{}{0}.
  \]
  But ${{\sum_{i\in\FN}\sum_{j\in j(i)}F_\mathscr{S}(S_{(i,j)})}} = F_\mathscr{S}{(S)}$, so that $m_\mathscr{S}\left(\cup\cup S_{(i,j)}\right)= \mon{}{1}$ holds.
\end{proof}

Let us exhibit an examples of $\sigma$-partition and $\sigma$-consistent BAFs on  a unit interval $[0,1]$.
\medskip

\begin{ex}\label{kalex2}
  Let $[0,1]=\langle [0,1]_{2^\varepsilon},\doteq\rangle$ be a continuum of a closed unit interval examined in Example \ref{continua}.
  A Kalina measure of $[0,\ell]$ is given as
  \[
  m_{F_{\mathscr{S}_R}}([0,\ell])\ =\
  m_{F_{\choice{R}{[0,1]}}}\left(\cup_{i\in\FN}\cup_{j\leq 2^i\cdot \ell}\mon{}{c_{(i,j)}}\right)\ =\
  \mon{}{\ell}.
  \]
  However, a Kalina measure of each monad $\mon{}{q}$ is given as
  \[
  m_{F_{\mathscr{S}_R}}(\mon{}{q})\ =\
  m_{F_{\choice{R}{[0,1]}}}(q)\ =\ \mon{}{0}
  \]
  and, thus, $\sigma$-superadditivity follows.
\end{ex}

\section{Probability Measures and Random Variables}

Let $A$ be a nonempty Borel semiset and $F$ a Borel approximating function.
A function $X_F:A\rightarrow Q$ is said to be a \textit{random variable} with respect to $F$ if for each $L\in\mathscr{B}_Q$ it is satisfied that $(X_F^{-1})``L$ is $\langle F(A),F\rangle$-observable and the function $D:\mathscr{B}_Q\rightarrow R$ defined by
\[
D(L)=m_{F(A),F}\left(\left(X_F^{-1}\right)``L\right)
\]
is $\sigma$-superadditive.
The function $D$ is said to be the \textit{probability distribution} of $X_F$.

\begin{ex}
  One example of random variables on a uniform real interval $[0,1]$ examined in Example \ref{kalex2} is given simply as
\[
X_{F_{\mathscr{S}}}(x)\ =\ {x},
\]
and its probability distribution for any subclass $B\subseteq [0,1]$ is given simply as
\[
 D(B)\ =\ m_{F_{\mathscr{S}}}\left(\left(X_{F_{\mathscr{S}}}^{-1}\right)``B\right)\ =\ m_{F_{\mathscr{S}}}\left(B\right).
\]
\end{ex}

\section{Kalina Measurable Functions}

Let $\mathscr{C}=\langle C,\doteq_\mathscr{C}\rangle$ be a continuum and $\mathscr{R}=\langle Q,\doteq\rangle$ be a real continua.
A real function\footnote{The definition of real functions are given in 2.3 Morphism of Tsujishita \cite{tjst}. See also Section 3 of Sakahara and Sato \cite{real} for details.} $\mathscr{F}:\mathscr{C}\rightarrow \mathscr{R}$ is said to be \textit{measurable} if for each $r\in R$ which satisfies $r=\mon{}{q}$ for some $q\in Q$ there exist a Borel class $B\subseteq C$ and a $\mathscr{F}$'s representation $F$, that is, $\mathscr{F}=[F]$, such that 
\[
\left(\forall q\in Q\right)
\left(\exists B\in\mathscr{B}_C\right)
\left(B=\fig{}{\{x\in C;\, F(x)\leq q\}}\right),
\]
where $\mathscr{B}_C$ is the smallest $\sigma$-ring containing all the set-theoretically definable subclasses of $C$.
Then, the next theorem follows.
\begin{thm}
Every real function $\mathscr{F}$ is measurable.
\end{thm}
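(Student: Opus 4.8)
The plan is to strip the definition of measurability down to one concrete demand and then to recognise the resulting class as a low-level member of $\mathscr{B}_C$. Fix a representation $F$ of $\mathscr{F}$, so that $\mathscr{F}=[F]$ with $F$ a set-theoretically definable function from $C$ into $Q$. For each $q\in Q$ put $A_q=\{x\in C;\,F(x)\leq q\}$. Since $F$ and the order $\leq$ on $Q$ are set-theoretically definable, $A_q$ is a set-theoretically definable subclass of $C$, and the entire theorem reduces to the single assertion that the figure $\fig{C}{A_q}$ is a Borel class for every $q$; the representation $F$ together with $B=\fig{C}{A_q}$ will then be exactly the data the definition requires.

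First I would fix a descending sequence $(R_k)_{k\in\FN}$ of set-theoretically definable reflexive symmetric relations generating the indiscernibility, so that $\doteq_\mathscr{C}=\bigcap_{k\in\FN}R_k$ and hence $\fig{C}{A_q}=\{x\in C;\,(\exists y\in A_q)(x\doteq_\mathscr{C}y)\}$. The claim to establish is
\[
\fig{C}{A_q}\ =\ \bigcap_{k\in\FN}R_k``A_q .
\]
Each class $R_k``A_q$ is set-theoretically definable, being the image of a set-theoretically definable class under a set-theoretically definable relation, so the right-hand side is a $\pi_1$-subclass of $C$, hence a member of $\mathscr{B}_C$; once the equality is secured the theorem follows. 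The inclusion $\subseteq$ is immediate: if $x\doteq_\mathscr{C}y$ with $y\in A_q$, then $\langle x,y\rangle\in R_k$ for every $k$ by symmetry, so $x\in R_k``A_q$ for every $k$.

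The substantive step is the reverse inclusion $\supseteq$, which is a genuine interchange of quantifiers rather than a formality: lying in $\bigcap_k R_k``A_q$ only produces, for each $k\in\FN$, a witness $y_k\in A_q$ with $\langle x,y_k\rangle\in R_k$, whereas membership in $\fig{C}{A_q}$ asks for one witness valid for all $k$ at once. I expect this to be the main obstacle, and I would clear it with the axiom of prolongation. The assignment $k\mapsto y_k$ is a countable function, so it prolongs to a set function $\bar{y}$ on some $\delta\in N\setminus\FN$; because the relations $R_k$ are descending and $A_q$ is set-theoretically definable, the class of indices $k\in\delta$ for which $\bar{y}(k)\in A_q$ and $\langle x,\bar{y}(k)\rangle\in R_k$ is set-theoretically definable and contains all of $\FN$, whence it contains some $\eta\in\delta\setminus\FN$. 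For such an $\eta$ the element $\bar{y}(\eta)\in A_q$ satisfies $\langle x,\bar{y}(\eta)\rangle\in R_k$ for every $k\in\FN$, i.e.\ $\bar{y}(\eta)\doteq_\mathscr{C}x$, supplying the single witness and completing the equality and the proof.
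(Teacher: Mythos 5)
Your proof is essentially correct, but it takes a visibly different and more self-contained route than the paper. The paper disposes of the theorem in two lines: it invokes Lemma~1.2.1(b) of Kalina and Zlato\v{s} to conclude that $\{x\in C;\,F(x)\leq q\}$ is a \emph{real class}, and then simply asserts that the figure of a real class lies in $\mathscr{B}_C$. You instead exhibit the figure explicitly as the $\pi_1$-class $\bigcap_{k\in\FN}R_k``A_q$, proving the nontrivial inclusion by prolongation and overspill; this buys an explicit bound on the Borel complexity of $\fig{C}{A_q}$ and makes the argument independent of the external lemma, which is genuinely more informative than what the paper records. Two points deserve care. First, you assume the representation $F$ is set-theoretically definable, whereas the paper only uses the weaker hypothesis that $F$ is a real class; whether Sd-ness is available depends on how Tsujishita's morphisms are set up, so you should either justify it from that definition or weaken your first step to match (in which case $R_k``A_q$ need no longer be Sd and your complexity count changes). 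Second, in the overspill step you quietly use $R_k$ for $k\in\delta\setminus\FN$: the descending sequence $(R_k)_{k\in\FN}$ must itself be prolonged to a set sequence $(\bar{R}_k)_{k\in\delta}$ (unproblematic when $C$ is a set, as in the paper's continua, but worth saying), and one then needs a single infinite $\eta$ with $\bar{R}_\eta\subseteq R_m$ for all finite $m$ simultaneously, which requires one further, standard, overspill. Neither point is fatal, but both should be made explicit.
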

\begin{proof}
Since $F$ is a real class, $\{x\in C;\, F(x)\leq q\}$ is a real class too by the (b) of Lemma 1.2.1 of Kalina and Zlato\v{s} \cite{Kalina-Zlatos1988}.
Thus, its figure is an element of $\mathscr{B}_C$.
\end{proof}

\section{Integration}
Let $\mathscr{S}$ be a $\sigma$-partition of a continuum $\mathscr{C}=\langle C,\doteq_\mathscr{C}\rangle$. 
Let $\mathscr{F}:\mathscr{C}\rightarrow \mathscr{R}$ be a real function and $G_{\mathscr{S}}$ be a $\sigma$-consistent BAF which gives Borel approximating sequence $G_{\mathscr{S}}(F^{-1}(\{y\}))=(x_k)_{k\in\FN}$ for each $y\in F(B)$ and $G_{\mathscr{S}}(C)=(s_k)_{k\in\FN}$. 
Then, the \textit{integral of $\mathscr{F}$ over $\fig{}{B}$ with respect to $G_{\mathscr{S}}$} is given as
\[
  \int_{\mathscr{S}(B)} \mathscr{F}dm_{G_\mathscr{S}}\ =\ \lim_{k\in\FN}\mon{}{{{\sum_{y\in {F}(B)}y\cdot \frac{x_k}{s_k}}},}
\]
in which $\mathscr{S}(B)=\{S\in\mathscr{S}\,;\,S\subseteq \fig{}{B}\}$ and $F$ is a representation of $\mathscr{F}$.

\begin{thm}
  Let $[0,1]=\langle [0,1]_{2^\varepsilon},\doteq\rangle$ be a unit-length real continuum,  $\mathscr{S}$ be a $\sigma$-partition of $[0,1]$, and $G_\mathscr{S}$ be an almost uniform topological BAF.
  Then, the integration of $\mathscr{F}$ over $[0,\ell]$, where $\ell\in[0,1]_{2^\varepsilon}$, coincides with that of Riemann:
  \[
  \int_{[0,\ell]} \mathscr{F}\, dm_{G_\mathscr{S}}
  \ =\ \int_0^\ell \mathscr{F}(x) dx.\ \footnotemark
  \]
  \footnotetext{The definition is given as $\int_0^\ell \mathscr{F}(x)dx=\mon{}{\sum_{0\leq x\leq \ell} F(x)\Delta x}$ where $\Delta x=\frac{1}{2^\varepsilon}$.
    See Sakahara and Sato \cite{real} for detailed descriptions of this notation.}
\end{thm}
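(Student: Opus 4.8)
The plan is to unwind both sides into explicit $\FN$-indexed sequences of rational sums and then show that these two sequences determine the same monad, i.e.\ the same real. On the left, the definition of the integral gives
\[
\int_{[0,\ell]}\mathscr{F}\,dm_{G_\mathscr{S}}\ =\ \lim_{k\in\FN}\mon{}{\sum_{y\in F(B)} y\cdot\frac{x_{y,k}}{s_k}},
\]
where $\fig{}{B}=[0,\ell]$, the sequence $(x_{y,k})_{k\in\FN}=G_\mathscr{S}(F^{-1}(\{y\}))$ records the topological measure of the level set on which $F=y$ inside $[0,\ell]$, and $(s_k)_{k\in\FN}=G_\mathscr{S}(C)$. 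On the right, the Riemann integral is $\mon{}{\sum_{0\le x\le\ell}F(x)\cdot 2^{-\varepsilon}}$ with $\Delta x=2^{-\varepsilon}$. Since $[0,1]_{2^\varepsilon}$ is a set, for each fixed $k$ both inner expressions are genuine finite set-sums and $F(B)$ is a set of rationals, so no convergence issue arises before the limit in $k$ is taken.

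The key step is to identify, for every value $y$, the topological measure of its level set with the grid-counting proportion,
\[
\lim_{k\in\FN}\frac{x_{y,k}}{s_k}\ \doteq\ \frac{\bigl|\{x\in[0,\ell]_{2^\varepsilon}\,;\,F(x)=y\}\bigr|}{2^\varepsilon},
\]
and this is exactly where \emph{almost uniformity} is used. Because $G_\mathscr{S}$ is topological and almost uniform, all but finitely many monads carry the common weight $n_k=|S_{x,k}|$ while the remaining ones stay inside the band $p\le |S_{x,k}|/|S_{y,k}|\le q$; hence the measure contributed by a union of monads is, up to the vanishing error from the finite exceptional set $b$ and the factors $p,q$, proportional to the number of monads it comprises. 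Invoking Theorem \ref{coincidence}, this monad-proportion coincides up to $\doteq$ with the proportion computed by the canonical BAF on the choice class, which for the uniform grid $[0,1]_{2^\varepsilon}$ — as already computed for intervals in Example \ref{continua} — is precisely the grid-counting proportion. I would establish the displayed equivalence by re-running the estimate in the proof of Theorem \ref{coincidence} with the set $\{x\in[0,\ell]_{2^\varepsilon}\,;\,F(x)=y\}$ in place of a single interval.

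With the level-set measures so identified, the remaining step is a Fubini-type reindexing that converts the Lebesgue-style sum over values into the Riemann sum over grid points:
\[
\sum_{y\in F(B)} y\cdot\frac{\bigl|\{x\in[0,\ell]_{2^\varepsilon}\,;\,F(x)=y\}\bigr|}{2^\varepsilon}\ =\ \frac{1}{2^\varepsilon}\sum_{y\in F(B)}\ \sum_{\substack{x\in[0,\ell]_{2^\varepsilon}\\ F(x)=y}} F(x)\ =\ \sum_{0\le x\le\ell} F(x)\cdot 2^{-\varepsilon},
\]
which holds as an identity of set-sums since the level sets partition $[0,\ell]_{2^\varepsilon}$. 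Combining the two displays, the left-hand rational sequence is $\doteq$-equivalent to the constant Riemann value for all sufficiently large $k$, so by the definition of convergence of real sequences from Section \ref{kalina} both sides name the same monad, which finishes the proof.

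The main obstacle I expect is precisely this key step: controlling, uniformly in $k$, the discrepancy between the topological measure $x_{y,k}$ — a union of the shrinking sets $S_{(i,j),k}$, which overlap for small $k$ and are switched on only with the lag $k\ge i$ built into the topological BAF — and the plain grid count of the level set. One must show that the overlap error, the lag, the finite exceptional set $b$, and the $p,q$ band all contribute terms that are absorbed into $\frac{1}{\FN}$ once divided by $s_k$, so that the ratio survives to the limit; the $\sigma$-consistency of $G_\mathscr{S}$ is what guarantees that summing these level-set measures over the range $F(B)$ reconstitutes the measure of the whole region $[0,\ell]$ rather than leaking mass.
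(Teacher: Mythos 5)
Your proposal follows essentially the same route as the paper's own proof: both invoke Theorem \ref{coincidence} (almost uniformity plus topologicality) to replace each level-set ratio $x_{y,k}/s_k$ by the grid-counting proportion $|F^{-1}(y)|\cdot\Delta x$, and then perform the same level-set reindexing to turn the value-indexed sum into the Riemann sum over $[0,\ell]_{2^\varepsilon}$. The uniformity-in-$k$ and limit-sum interchange issues you flag as the main obstacle are real but are passed over silently in the paper's proof as well, so your treatment is if anything more careful on that point.
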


\begin{proof}
  Let us recall that the topological $\sigma$-partition $\mathscr{S}$ is given as
  \[
  \mathscr{S}\ =\ [0,1]_{2^\varepsilon}/\doteq.
  \]
  Let $G_{\mathscr{S}}$ be an almost uniform topological $\sigma$-consistent BAF.
  Then, the integral of $\mathscr{F}$ over $[0,\ell]=\fig{}{[0,\ell]_{2^\varepsilon}}$ is given as
  \[
  \int_{[0,\ell]}\mathscr{F}\, dm_{G_\mathscr{S}}\ =\ 
  \lim_{k\in\FN}\mon{}{\sum_{q\in F([0,\ell]_{2^\varepsilon})} q\cdot \frac{x_k(q)}{2^\varepsilon}}.
  \]
  where $(x_k(q))_{k\in\FN}=G_\mathscr{S}(F^{-1}(q))$.

  Since $G_\mathscr{S}$ is topological and  almost uniform, the following equation holds by Theorem \ref{coincidence}
  \[
    {\lim_{k\in\FN}\mon{}{\sum_{q\in F([0,\ell]_{2^\varepsilon})}q\cdot \frac{x_k}{2^k}}}\ =\
    \mon{}{\sum_{q\in F([0,\ell]_{2^\varepsilon})} q\cdot |F^{-1}(q)|\cdot\Delta x}.
  \]
  While the Riemann sum of the same interval is given as:
  \[
    \int_0^\ell \mathscr{F}(x) dx\ =\ 
    \mon{}{\sum_{0\leq x\leq \ell}F(x)\cdot\Delta x}\ =\
    \mon{}{\sum_{q\in F([0,\ell]_{2^\varepsilon})}q\cdot |F^{-1}(q)|\cdot\Delta x}.
  \]
  It implies that 
  \[
    \int_{[0,\ell]} \mathscr{F}\,dm_{G_\mathscr{S}}
  \ =\ \int_0^\ell \mathscr{F}(x) dx.
  \]
\end{proof}

\section{Concluding Remarks}
The present paper provides a way to measure any given continua including the ones which are nonmeasurable with the Lebesgue's type measurement. 
Two examples, which are dealt in Example \ref{continua} and \ref{anothercontinua}, are the classes corresponding to the Vitali set, which dissects a unit interval into a $\sigma$-partition. 
Another example, whose details are dealt in the accompanying paper \cite{nonamenables}, is the class resulting from Banach-Tarski's theorem \cite{Banach1924}, which divides a ball into a $\sigma$-partition consisting of choice classes from $G$-orbits. 
 
Both examples divide a class which has a finite positive measure to an equal sized $\sigma$-partition.
Since Lebesgue measures are required to preserve $\sigma$-additivity, each element having 0 measure cannot have positive measure even when countably many of them are added together.
In contrast, Kalina measures relax it to $\sigma$-superadditivity to make room for them to have positive measure. 

To be precise, it is enabled by the system of rational cuts which is extended from rational numbers by adding two types of additive cuts, $\isrc{\frac{1}{\FN}}$ and $\isrc{\frac{\FN}{1}}$. 
$\sigma$-addition of any finite rational numbers are set to equal to $\isrc{\frac{\FN}{1}}$, while any finite rational numbers divided by $\FN$ are, dually, determined to $\isrc{\frac{1}{\FN}}$.
The reason why $\sigma$-addition of $\isrc{\frac{1}{\FN}}$ can be any finite numbers is obtained by retracing this process in reverse, that is, any numbers divided into $\sigma$-partition can be restored to the original when they are added together.
These values are determined precisely by the generating sequence of each component class with the help of the axiom of choice.
The $\sigma$-superadditivity arises, superficially, only because the values of these measures are approximated by real numbers.

Since any real continua of AST are divided into $\sigma$-partition of points, Kalina measures on them  do not require null-classes in essence.
In another word, every class corresponding to a null-set of ZFC is all absorbed in a point of AST, so that $\sigma$-union of them covers each continuum with nothing left outside.

Notice that the axiom of choice, or that of extensional coding, is adopted here.
It is often considered to be the source of the problem of existence of some nonmeasurable classes.
However, as it is shown in the present paper, the axiom is indispensable to determine the exact value of each measure, and nothing to do with non-measurability.
The essence of the problem lies elsewhere.
The present paper offers one possible way to avoid the problem.

The way indefiniteness is introduced here is also worth mentioning.
AST succeeded in including indefiniteness into formal system of numbers as a form of semisets, or, specifically say, $\FN$.
The additive rational cut $\isrc{\frac{1}{\FN}}$ can be said to introduce indefiniteness in a dual way.
Making use of approximating sequences, the system provides a way to calculate with these indefinite numbers in an appropriate manner.
It may provide a whole new way to treat phenomena involving indefiniteness.

These kind of measurement matter typically in everyday situations.
When figuring out the ratio of the people who thinks the same way as one does, or that of the days something good will happen, the total number of people or  days remain indefinite.
We would conduct this kind of somewhat vague measuring in everyday life based on finite experiences and extending them to countably many possibilities, 
The framework developed here offers a way to model these kind of measuring in a proper manner.

\bibliographystyle{plain}
\bibliography{ref} 

\end{document}